\documentclass[11pt,xcolor=dvipsnames,svgnames,table,reqno]{amsart}

\usepackage[utf8]{inputenc}
\usepackage{graphicx}
\usepackage{tipa}
\graphicspath{ {./images/} }
\usepackage{hyperref}
\usepackage{amssymb} 
\usepackage{comment}
\usepackage{amsthm} 
\usepackage{mathtools}
\usepackage{nccmath}
\usepackage{physics} 
\usepackage{tipa} 
\usepackage{tikz-cd} 
\usepackage{yfonts}
\usepackage{enumerate} 
\usepackage[T1]{fontenc} 
\usepackage[inline]{enumitem} 
\usepackage{xspace} 
\usepackage{commath}
\usepackage{tikz-cd}
\usepackage{color} 
\usepackage{pgfplots}
\pgfplotsset{compat=1.18}
\usepackage{hyperref} 
\usepackage{cleveref}

\usepackage[left=1.5in, right=1.5in, bottom=1.25in,
height=8in]{geometry}
\usepackage{wrapfig}

\def    \R      {{\mathbb R}}
\def \Z {{\mathbb Z}}

\renewcommand{\epsilon}{\varepsilon}
\newtheorem{maintheorem}{Theorem} 
\newtheorem{maincorollary}{Corollary} 
\newtheorem{theorem}{Theorem}[section] 
\newtheorem{corollary}[theorem]{Corollary} 
\newtheorem{lemma}[theorem]{Lemma}
\newtheorem{proposition}[theorem]{Proposition}

\newcommand{\bb}[1]{\mathbb{#1}}

\theoremstyle{definition}

\newtheorem{definition}[theorem]{Definition}

\theoremstyle{remark}
\newtheorem{remark}[theorem]{Remark}
\newtheoremstyle{italichead} 
  {3pt}   
  {3pt}   
  {\normalfont}  
  {}      
  {\itshape}     
  {.}     
  { }     
  {}      
\newtheorem{ex}[theorem]{Example}
\newtheorem{rmk}[theorem]{Remark}

\AtEndDocument{\bigskip{\footnotesize
 \textsc{Rafael Fernandes, UC Santa Cruz, Santa Cruz, California.} \par  
  \textit{E-mail}:  \texttt{rfernan9@ucsc.edu}\\

 \textsc{Joao Pering Jr, Stony Brook University, NY. } \par  
  \textit{E-mail}:  \texttt{joao.pering@stonybrook.edu}
  }}

\title{On the growth rate of Reeb orbits on star-shaped hypersurfaces}
\date{}
\author{Rafael Fernandes and Joao Pering}

\begin{document}

\keywords{Fiberwise star-shaped hypersurfaces, Reeb flows, growth of periodic orbits, symplectic homology, spectral invariants, string topology, symplectically degenerate maximum}

\thanks{The first author was partially supported by the NSF grant DMS-2304206 and the second author was supported by the Luxembourg National Research Fund (FNR) SYMPG 19513846, NSF award DMS-220330 and the Simons Foundation.}

\maketitle

\setcounter{tocdepth}{2}
\begin{abstract}

In this article, we study the growth rate of Reeb orbits on fiberwise star-shaped hypersurfaces in the cotangent bundle of a closed manifold. We prove that under a suitable topological condition on the base manifold the Reeb flow on any such hypersurface carries infinitely many simple closed orbits. Moreover, the number of
simple Reeb orbits with period at most $T$ grows at least like the prime
numbers, that is, like $T/\log(T)$. The topological condition we assume is the existence of a non-nilpotent class in the homology of the free loop space of the manifold, with respect to the Chas–Sullivan product, lying in a connected component associated to a non-torsion class in the first homology of the manifold. In particular, for any Riemannian metric on a manifold satisfying such a topological condition, the number of geometrically distinct closed geodesics with length at
most $l$ grows at least like $l/\log(l)$. We also prove, using symplectic
homology, that if a Liouville domain of dimension at least $4$ with vanishing first Chern class admits a Reeb symplectically degenerate maximum representing a non-torsion first homology class of the domain, then the number of simple Reeb
orbits with period at most $T$ grows at least like $T/\log(T)$. 
\end{abstract}

\tableofcontents

\section{Introduction and main results}

\subsection{Introduction} 

In this paper, we further investigate the general question of how topology influences dynamics. We show that the existence of a special homology class of the free loop space of a closed manifold forces a growth rate, with respect to the period, for Reeb orbits on the boundary of fiberwise star-shaped hypersurfaces in its cotangent bundle.

In \cite{bangert1984closed}, Bangert and Hingston prove that for any closed Riemannian manifold $(M,g)$ of dimension at least $2$ whose fundamental group is abelian and contains an element of infinite order, the number of geometrically distinct closed geodesics of length at most $T$ grows at least like the prime numbers, that is, $T/\log(T)$. 

Their result can be interpreted as follows. Let $M$ be a closed Riemannian
manifold, and consider the geodesic flow restricted to the unit cotangent bundle; its integral curves are lifts of unit-speed geodesics. The
length of a closed geodesic corresponds to the period of its lift, so the number of
geometrically distinct closed geodesics of length at most $T$ equals the number
of geometrically distinct closed orbits of the geodesic flow on the unit sphere
bundle with period at most $T$. The unit sphere bundle is a hypersurface of
contact type in the cotangent bundle, and the geodesic flow on it is in fact the
Reeb flow. This motivates the question whether the same growth rate for simple
Reeb orbits holds for more general hypersurfaces of contact type in a cotangent
bundle, not necessarily arising from a Riemannian metric.

There has been some work on the growth of Reeb orbits, as a function of the
period, for fiberwise star-shaped hypersurfaces in cotangent bundles. For
instance, \cite{macarini2011positive, heistercamp2011weinstein} study, among
other things, the topological entropy of the Reeb flow on such hypersurfaces.
In \cite{macarini2011positive}, Macarini and Schlenk show that if the based loop
space of a closed manifold $M$ is sufficiently ``complicated'', then the Reeb
flow on any fiberwise star-shaped hypersurface in $T^*M$ has positive
topological entropy. In particular, if $M$ is a closed surface other than the sphere, torus,
projective plane, or Klein bottle, then the Reeb flow on any fiberwise
star-shaped hypersurface in $T^*M$ has positive topological entropy. Moreover,
it follows from results of Katok in \cite{katok1980lyapunov} and \cite[Theorem 4.1]{katok1982entropy}, or alternatively from Lima and Sarig \cite[Theorem~1.1]{lima2019symbolic}, that the number
of Reeb orbits grows exponentially with respect to the period.

The first result in the direction we are interested in appeared in
\cite{pellegrini2022bangert}. Pellegrini proved that if $M$ is a closed
manifold of dimension at least two
admitting a homologically non-trivial $S^1$-action,
then for any non-degenerate fiberwise star-shaped hypersurface in $T^*M$ the
number of simple Reeb orbits with period at most $T$ grows at least like
$\log(T)$. Here, a \textit{homologically non-trivial $S^1$-action} means a continuous circle action whose orbits represent a non-torsion homology class.

Their proof is based on a \textit{pinching argument}, introduced in
\cite{macarini2011positive}. This yields inequalities relating the spectral
invariants of Floer homology classes for a quadratic at infinity Hamiltonian on
$T^*M$ (whose level set is the hypersurface) to those of a kinetic quadratic
Hamiltonian associated to a Riemannian metric. The argument exploits a
filtration-preserving Viterbo isomorphism between Floer homology and the
homology of the free loop space, allowing them to perform estimates using the
energy functional. The underlying identification between these
groups goes back to \cite{viterbo2003functors,viterbo1999functors}. Finally, the
non-degeneracy assumption in \cite{pellegrini2022bangert} rules out the presence
of complicated orbits, so that an index argument allows them to geometrically
distinguish the orbits arising from the relevant Floer homology classes.

Instead of assuming the existence of a homologically non-trivial $S^1$-action, we assume the
existence of a non-nilpotent homology class of a connected component of the
free loop space corresponding to a non-torsion class in $H_1(M;\Z)$. This is a
weaker condition, and in Example \ref{ex: no S1 action} we present manifolds
satisfying our hypothesis but admitting no non-trivial $S^1$-action. We also remark that the mere existence of a non-nilpotent free loop space
homology class is not sufficient even to guarantee infinitely many Reeb orbits,
as illustrated by the Katok-Ziller Finsler metrics on $S^n$, which have only
finitely many closed geodesics; see \cite{katok1973ergodic, ziller1983geometry}
and \cite{cohen2003loop}. This justifies our additional assumption that the
non-nilpotent homology class lies in a connected component of the free loop space
corresponding to a non-torsion first homology class of the base manifold.

In our approach to the growth of Reeb orbits in Theorem \ref{thm: main_result}, we use the symplectic homology of
the domain bounded by the hypersurface, together with its canonical filtration, as in Section \ref{subsec: cont and sym homology}. In this setting, Viterbo's theorem provides an algebra isomorphism which is not
filtration preserving in general; see Section
\ref{subsec: SH of a cotangent bdle} and \cite{abbondandolo2006floer,abouzaid2013symplectic}. The spectral invariants of
the iterates of a non-nilpotent symplectic homology class then lead to Reeb
orbits contributing to the growth estimate. In particular, our argument avoids
choosing a Riemannian metric and implementing a pinching argument. 

Moreover, we do not assume that the fiberwise star-shaped hypersurface is
non-degenerate. In the degenerate setting, highly degenerate Reeb orbits may
appear. These are known as \textit{symplectically degenerate maxima}, or simply
SDMs; see Definition \ref{def: SDM}. In the presence of an SDM, the standard action-index arguments used to
geometrically distinguish orbits break down, and hence iterates of the SDM
could potentially create redundancy in orbit counts. Nonetheless, it is known
that the dynamics in the presence of a SDM exhibits special features.

In this paper, we study the role played by SDMs in the Reeb dynamics of the boundary of a Liouville domain. The importance
of SDMs goes back to \cite{hingston2009subharmonic, ginzburg2010conley}. In
\cite{ginzburg2010conley}, Ginzburg proved that on a symplectically aspherical
closed manifold, the presence of a SDM for a Hamiltonian diffeomorphism implies
the existence of infinitely many simple periodic orbits (which implies the degenerate case of the Conley conjecture). SDMs have since played an important role in existence
results; see, for instance,
\cite{ginzburg2010conley, ginzburg2009action, hein2012conley, atallah2024number, ginzburggurel2010conley}
and references therein. In these works, the periodic orbits under consideration
are contractible, whereas in our setting we need to understand
\textit{non-contractible Reeb SDMs}. Much less is known in the non-contractible case;
see \cite{ginzburg2013closed, ginzburg2015conley}. In \cite{ginzburg2013closed},
the authors prove, in particular, that if the boundary of a Liouville domain has
a simple SDM with action $c$ and mean index $\Delta$, then for every $k\in\Z$ sufficiently large
there exists a Reeb orbit with action slightly larger than $kc$ and mean index equal to $k\Delta+1$.

We prove an analogous statement using symplectic homology instead, which is convenient for our purposes. Our proof follows the same general line of ideas (first
introduced in \cite{ginzburg2010conley}), but it involves its own fundamental
difficulties and differs in important aspects from previous approaches. Beyond simple Reeb SDMs, we prove that the presence of a not necessarily simple Reeb SDM satisfying a certain topological condition implies a growth rate of geometrically distinct Reeb orbits with respect to the period; see Theorem \ref{thm: SDM generation on a covering}. In particular, the Reeb flow has infinitely many simple orbits. This is, to the best of our knowledge, the first result relating not necessarily simple nor contractile Reeb SDMs to a growth rate of the Reeb flow.

As a final remark, we point out the results of McLean in \cite{mclean2012local} and Hryniewicz and Macarini in \cite{hryniewicz2015local}. Among other things, they show, using different variants of Floer homology, that if the rank of the homology of the free loop space of a closed manifold $M$ is unbounded as a
function of the degree, then any fiberwise star-shaped hypersurface in the
cotangent bundle $T^*M$ carries infinitely many simple Reeb orbits. However, no
statement is made about the growth of the number of these orbits with respect to period.
This situation occurs, for example, when $M$ is simply connected
and its cohomology ring has at least two generators over $\mathbb{Q}$; see
\cite{vigue1976homology}. The case considered here is of a different nature:
we work with manifolds whose first Betti number is nonzero and for which the
homology of the free loop space admits a non-nilpotent element in a connected
component corresponding to a non-torsion class in $H_1(M;\Z)$.

\subsection{Main results} Let $M$ be a closed smooth manifold of dimension $n$. A \textit{fiberwise star-shaped
hypersurface} $\Sigma$ in its cotangent bundle is a hypersurface transverse to the radial vector field that is the boundary of a smooth
fiberwise star-shaped domain $W\subset T^*M$, that is, a domain such that
$W\cap T_x^*M$ is star-shaped with respect to $0_x\in T_x^*M$ for every
$x\in M$. In particular, $W$ contains the zero section, i.e., $0_M\subset W$. Moreover, since $M$ is compact, $\Sigma$ is compact as well. Recall that
$\Sigma$ carries a natural contact form induced by the Liouville form on the
cotangent bundle, so we can consider its Reeb flow; see Section \ref{subsec: SH of a cotangent bdle}. In this paper, except when explicitly stated otherwise, all homology groups are taken with coefficients in $\Z_2$. 

Let $\Lambda M$ be the free loop space of $M$, and consider its homology $H_*(\Lambda M)$ endowed with the Chas--Sullivan
product. For a class $\beta\in H_*(\Lambda M)$ and $m\in \mathbb{N}$, we denote
by $\beta^m$ the $m$-fold product of $\beta$. The connected components of $\Lambda M$ are in bijection with conjugacy classes of $\pi_1(M)$. 

For a homology class $\eta\in H_1(M;\Z)$, we
denote by $\Lambda_\eta M$ the subspace of loops representing $\eta$. This is a
union of connected components of $\Lambda M$. Moreover, we say that a homology
class $\eta\in H_1(M;\Z)$ is \textit{primitive} if it is not divisible by any
integer distinct from $\pm 1$. For each $T>0$, we define $N_{\Sigma}^\eta(T)$
to be the number of simple Reeb orbits on $\Sigma$ with period at most $T$
representing a homology class in $\mathbb{N}\eta$. An element
$\beta\in H_*(\Lambda_\eta M)$ is called a \textit{$H_1(M)$-infinite non-nilpotent class}
if $\beta^m\neq 0$ for all $m\in \mathbb{N}_{>0}$ and some $\eta\in H_1(M;\Z)$ that is non-torsion. Given $\beta \in H_*(\Lambda_\eta M)$ a $H_1(M)$-infinite non-nilpotent class, we denote by $\eta(\beta)\in H_1(M;\Z)$ the homology class $\eta$.

We are now ready to state the main result of this paper.

\begin{maintheorem}\label{thm: main_result}
Let $M$ be a closed smooth manifold of dimension at least $2$. If there exists
an $H_1(M)$-infinite non-nilpotent class $\beta$ with $\eta(\beta) = \eta$, then for any smooth fiberwise star-shaped hypersurface
$\Sigma\subset T^*M$,
\begin{equation}\label{eq:growth}
\liminf_{T\to\infty} N_{\Sigma}^{\eta}(T)\,\frac{\log(T)}{T} > 0,
\end{equation}
\end{maintheorem}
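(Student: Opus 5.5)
We argue through the symplectic homology of the fiberwise star-shaped domain $W$ bounded by $\Sigma$, filtered by action, together with the Viterbo isomorphism $SH_*(W)\cong H_*(\Lambda M)$ (twisted appropriately if $M$ is not spin; with $\Z_2$ coefficients this is harmless). This isomorphism intertwines the pair-of-pants product with the Chas--Sullivan product and respects the splitting by free homotopy classes, hence by classes in $H_1(M;\Z)$. Let $\alpha\in SH_*(W)$ be the image of $\beta$. Then $\alpha^m\neq 0$ for every $m$, and $\alpha^m$ lies in the summand generated by orbits representing $m\eta$. For each $m$ we consider the spectral invariant $c_m:=c(\alpha^m)$, the infimum of action levels at which $\alpha^m$ appears in the filtered groups $SH^{<a}_*(W)$. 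The pair-of-pants product is compatible with the filtration, so the triangle inequality $c_{m+k}\le c_m+c_k$ holds. Hence $c_m\le m\,c_1$, and by Fekete the limit $\lim c_m/m$ exists. Each $c_m$ is the period of a closed Reeb orbit $\gamma_m$ (a carrier, which exists by a limiting argument in the degenerate case) representing $m\eta$. Because $\eta$ is non-torsion, every $c_m$ is positive. Moreover, the periods of orbits in the class $m\eta$ are bounded below by $\delta m$ for some $\delta>0$: pairing with a closed $1$-form $\theta$ on $M$ satisfying $\langle\theta,\eta\rangle>0$ and comparing with the Liouville form on the compact set $\Sigma$ gives $T\ge \mathrm{const}\cdot\langle\theta,m\eta\rangle$. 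So $c_m$ grows linearly in $m$.

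Next we count geometrically distinct orbits. Write $\gamma_m=x^{k}$ with $x$ simple, so that $x$ represents $(m/k)\eta'$, where $\eta'$ lies in $\mathbb{Q}\eta\cap H_1(M;\Z)$, and $x$ has period $c_m/k$. When $m=p$ is a prime, $\gamma_p$ is either simple or an iterate $x^p$ with $x$ in class $\eta$, or in class $\eta/p$ when $\eta$ is divisible. The prime number count $\#\{p\le T\}\sim T/\log T$ then gives the bound in \eqref{eq:growth}, provided we can rule out that the $\gamma_p$, for many primes $p$, are iterates of a bounded set of simple orbits. Suppose an orbit $x$ in class $\eta$ (up to the finitely many divisibility options) satisfies $\gamma_p=x^p$ for infinitely many primes $p$. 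Then $c_p=p\,\mathcal A(x)$ for these $p$. Together with subadditivity, this says that $x$ realizes the asymptotic slope $\lim c_m/m$. The subcase in which only finitely many primes occur is handled simply: the remaining primes produce simple orbits of distinct classes $p\eta$ with periods at most $Cp$, and these yield $\gtrsim T/\log T$ simple orbits of period at most $T$.

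The main obstacle is to exclude the scenario in which a single orbit $x$ carries $\alpha^p$ for infinitely many $p$. In the nondegenerate case, grading settles it: $\alpha^p$ has degree $p|\beta|-(p-1)n$, which is affine in $p$, while the degrees of $x^p$ are controlled by the mean index $p\,\hat\mu(x)$ up to an error of $n$. This forces $\hat\mu(x)=|\beta|-n$ and a specific local homology. The degenerate case is harder: an iterate carrying a class at the extreme degree of its local support is exactly an SDM, as in Definition \ref{def: SDM}. Our plan is to show that such an $x$ is a Reeb SDM whose class is non-torsion, and then to invoke the SDM growth result, Theorem \ref{thm: SDM generation on a covering}. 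That theorem produces, for every large $k$, orbits with action slightly above $k\mathcal A(x)$ and index $k\hat\mu+1$, and hence directly the growth rate $T/\log T$ in the classes $\mathbb N\eta$. We therefore split into two cases: either an SDM in class $\mathbb N\eta$ exists, in which case Theorem \ref{thm: SDM generation on a covering} applies, or no such SDM exists, in which case the index/action argument above shows that for all but finitely many primes $p$ the carriers $\gamma_p$ are simple or come from pairwise distinct simple orbits, and the prime counting finishes the proof. The step requiring the most care is the local Floer homology analysis showing that repeated carrying forces the SDM condition. To handle it we would use persistence of local Floer homology under admissible iterations and the shift $\mathrm{supp}\, HF^{loc}(x^p)\subset[p\hat\mu-n,p\hat\mu+n]$.
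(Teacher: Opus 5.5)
Your plan follows essentially the paper's proof: Viterbo's algebra isomorphism, carriers $x_p$ of $c_{\beta^p}(W)\le p\,c_\beta(W)$ with nonvanishing local symplectic homology in degree $p|\beta|-(p-1)n$, a dichotomy over primes according to whether $x_p=y^{pd}$ for a recurring simple orbit $y$, and in that case the mean-index limit $\Delta(y)=(|\beta|-n)/d$. This limit puts the degree at the top $\Delta(x_p)+n$ of the support, so $x_p$ is a Reeb SDM by Corollary \ref{cor: SDM recognition}, and Theorem \ref{thm: SDM generation on a covering} finishes.

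Fekete's lemma, the linear lower bound on periods and persistence under admissible iterations are not needed. The paper applies the SDM criterion directly to the iterate $x_p$, which Theorem \ref{thm: SDM generation on a covering} allows to be non-simple. Two points should be made explicit: the preliminary reduction to isolated Reeb orbits, and the fact that $\gamma_p$ may be $y^{a}$ for any $a\mid pd$, where $d$ is the divisibility of $\eta$, not only $y$ or $y^{p}$.
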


\begin{maincorollary}
In the setting of Theorem \ref{thm: main_result}, any fiberwise star-shaped
hypersurface in $T^*M$ carries infinitely many simple Reeb orbits representing
classes in $\mathbb{N}\eta$.
\end{maincorollary}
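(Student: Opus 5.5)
The corollary should follow directly from Theorem \ref{thm: main_result}, by a proof by contradiction. Fix a fiberwise star-shaped hypersurface $\Sigma\subset T^*M$. By hypothesis there is an $H_1(M)$-infinite non-nilpotent class $\beta$ with $\eta(\beta)=\eta$, so Theorem \ref{thm: main_result} applies to $\Sigma$ and gives
\[
\liminf_{T\to\infty} N_{\Sigma}^{\eta}(T)\,\frac{\log(T)}{T} > 0 .
\]

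Suppose, to the contrary, that $\Sigma$ carries only finitely many simple Reeb orbits representing classes in $\mathbb{N}\eta$, say $K$ of them. By definition, $N_{\Sigma}^{\eta}(T)$ counts simple Reeb orbits of period at most $T$ whose class lies in $\mathbb{N}\eta$. Therefore $N_{\Sigma}^{\eta}(T)\le K$ for every $T>0$. Since $\log(T)/T\to 0$ as $T\to\infty$, it follows that
\[
0\le \liminf_{T\to\infty} N_{\Sigma}^{\eta}(T)\,\frac{\log(T)}{T}\le \lim_{T\to\infty} K\,\frac{\log(T)}{T}=0,
\]
which contradicts the strict positivity given by the theorem. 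Hence $N_{\Sigma}^{\eta}(T)\to\infty$, and $\Sigma$ carries infinitely many simple Reeb orbits representing classes in $\mathbb{N}\eta$.

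There is no real obstacle here: all the dynamical content is in Theorem \ref{thm: main_result}. The only point to check is that the counting function in the theorem is exactly the one in the corollary, namely simple orbits with classes in $\mathbb{N}\eta$. Because of this, the orbits obtained are genuinely distinct simple orbits and not iterates of a single orbit.
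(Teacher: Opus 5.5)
Your proposal is correct and matches the paper, which states this corollary as an immediate consequence of Theorem~\ref{thm: main_result} without further argument. The deduction is exactly yours: a bounded counting function $N_\Sigma^\eta(T)$ would force the $\liminf$ in \eqref{eq:growth} to vanish, because $\log(T)/T\to 0$.
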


Among the manifolds satisfying the hypotheses of Theorem \ref{thm: main_result}
are:
$S^1\times M$, where $M$ is any closed manifold of dimension at least $1$;
products $M\times P$, where either $M$ or $P$ satisfies the hypotheses of Theorem
\ref{thm: main_result}; $H$-spaces with infinite fundamental group;
principal $S^1$-bundles $M$ over a base $B$ with $\pi_2(B)=0$; manifolds $M$
admitting a homologically non-trivial $S^1$-action, that is, a circle action whose orbits
represent a non-torsion first homology class; and closed manifolds that are
$K(G,1)$, where the group $G$ has an element in its center with infinite order in the abelianization of $G$. We return in Section \ref{sec: string topology and examp} to the study string topology and derive conditions on a manifold that ensure the existence of $H_1(M)$-infinite non-nilpotent classes.

We also point out that the result of Bangert and Hingston in
\cite{bangert1984closed} on the growth rate of closed geodesics on a closed
Riemannian manifold $(M,g)$ of dimension at least $2$ requires the fundamental
group $\pi_1(M)$ to be abelian. It is not clear how this assumption relates to
the hypotheses of Theorem \ref{thm: main_result}. However, in Section
\ref{sec: string topology and examp} we provide classes of manifolds with
non-abelian fundamental group that nevertheless satisfy the hypotheses of
Theorem \ref{thm: main_result} (in particular, Examples \ref{ex: mapping torus}
and \ref{ex: no S1 action}). Thus, the following corollary is meaningful.

\begin{maincorollary}
    Let $M$ be a closed smooth manifold of dimension $\geq 2$ that admits an $H_1(M)$-infinite non-nilpotent class $\beta$ with $\eta(\beta) = \eta$. For any Riemannian metric $g$, let $N^\eta_g(l)$ denote the number of geometrically distinct closed geodesics with length at most $l$ representing a homology class in $\mathbb{N}\eta$. Then  
    $$\liminf_{l\rightarrow \infty} N^{\eta}_g(l) \frac{\log(l)}{l} >0.$$
\end{maincorollary}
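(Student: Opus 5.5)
The statement to prove is the Riemannian corollary. The plan is to deduce it directly from Theorem \ref{thm: main_result} by applying it to the unit cotangent bundle of $g$.

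\emph{The hypersurface.} Let $\Sigma_g=\{(x,p)\in T^*M: |p|_{g^*}=1\}$ be the unit cosphere bundle of the metric $g$, which is the boundary of the unit codisk bundle $W_g=\{|p|_{g^*}\le 1\}$. Each fiber $W_g\cap T^*_xM$ is a round ball about $0_x$, so it is star-shaped. The radial field $p\,\partial_p$ is transverse to $\Sigma_g$, because the function $|p|^2$ is homogeneous of degree $2$ and its derivative along the radial field equals $2|p|^2=2\neq 0$ on $\Sigma_g$. Hence $\Sigma_g$ is a smooth fiberwise star-shaped hypersurface, and Theorem \ref{thm: main_result} applies to it: $\liminf_{T\to\infty} N^\eta_{\Sigma_g}(T)\log(T)/T>0$.

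\emph{Identifying Reeb orbits with geodesics.} The contact form on $\Sigma_g$ is the restriction of $\lambda=p\,dq$. I will check that its Reeb field equals the Hamiltonian vector field of $H=\tfrac12|p|^2_{g^*}$ restricted to $\Sigma_g$. Indeed, $\iota_{X_H}d\lambda$ vanishes on $T\Sigma_g$, and Euler's relation gives $\lambda(X_H)=p\,\partial_pH=2H=1$. The flow of $X_H$ is the cogeodesic flow, and the projection $\pi:T^*M\to M$ sends its orbits to unit-speed geodesics. Consequently:
\begin{itemize}
\item a closed Reeb orbit of period $T$ projects to a closed geodesic of length $T$;
\item the projection is freely homotopic to the orbit, since $\pi|_{\Sigma_g}$ is homotopic to the identity in $T^*M\simeq M$, so the two represent the same class in $H_1(M;\Z)$;
\item conversely, every closed geodesic parametrized by arc length lifts, via its velocity covector $g(\dot\gamma,\cdot)$, to a closed Reeb orbit.
\end{itemize}
Under this correspondence, simple Reeb orbits match primitive closed geodesics, counted up to time shift. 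Geometrically distinct closed geodesics are counted as distinct image sets, which identifies a geodesic with its reversal. The reversal of a primitive geodesic representing $k\eta$ represents $-k\eta$, and $-k\eta\notin\mathbb{N}\eta$ because $\eta$ is non-torsion. So each geometrically distinct geodesic of length at most $l$ in classes $\mathbb{N}\eta$ corresponds to exactly one simple Reeb orbit of period at most $l$ in classes $\mathbb{N}\eta$, and vice versa. Distinct primitive geodesics in these classes have distinct images, since a shared image would force them to be equal or reverses of each other. Hence $N^\eta_g(l)=N^\eta_{\Sigma_g}(l)$.

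\emph{Conclusion and main obstacle.} Substituting $N^\eta_g(l)=N^\eta_{\Sigma_g}(l)$ into the inequality from Theorem \ref{thm: main_result} gives the corollary. No step is genuinely hard. The only point needing care is the matching of counting conventions, namely simple versus primitive and orientation reversal. Even without using $\eta$ non-torsion, the map from simple orbits to image sets is at most two-to-one, so $N^\eta_g(l)\ge \tfrac12 N^\eta_{\Sigma_g}(l)$. That weaker bound still gives a positive $\liminf$, so the result is robust to the convention chosen.
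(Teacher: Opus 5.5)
Your proposal is correct and follows the same route as the paper. The paper deduces the corollary directly from Theorem \ref{thm: main_result} applied to the unit cosphere bundle $\Sigma_g$, using the observation in its introduction that the Reeb flow there is the cogeodesic flow, so Reeb periods are geodesic lengths and simple Reeb orbits correspond to geometrically distinct closed geodesics. Your treatment of the counting conventions (simple versus primitive, and orientation reversal handled either via $\eta$ non-torsion or via the two-to-one bound) makes explicit what the paper leaves implicit.
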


There are other results concerning the growth rate of closed geodesics lying in
iterates of a free homotopy class; see Ballmann and Ziller
\cite{ballmann1981closed}. Their approach requires the presence of a nontrivial finite-order element in the fundamental group (up to conjugacy), and thus applies, for instance, to manifolds with finite, nontrivial fundamental group. Moreover, the corresponding growth result is established only for
generic Riemannian metrics. See also
\cite{bangert1980closed, gromov2000three, shelukhin2023remark, rademacher2022second, contreras2025closed, allais2020growth, liu2023growth, rademacher2022closed}
for further results on closed geodesics and homotopy groups.

We also point out \cite{goresky2009loop}, where Goresky and Hingston study the
Chas-Sullivan product in the context of the homology of the energy-filtered
free loop space of a Riemannian manifold and derive consequences for closed
geodesics. In addition, in \cite{hingston2013resonance} a resonance
relation is proved for Finsler metrics on $S^n$, and non-nilpotent free loop
space homology classes play an interesting role.

Now we briefly discuss the proof of Theorem \ref{thm: main_result}. The
Chas-Sullivan product preserves the $H_1(M;\Z)$-splitting of the free loop
space
\begin{equation}\label{eq: splitting of free loop space}
    \Lambda M = \bigsqcup_{\zeta\in H_1(M;\Z)}\Lambda_\zeta M,
\end{equation}
at the level of homology. More precisely, for $\eta_1,\eta_2\in H_1(M;\Z)$,
the product is a map
\[H_*(\Lambda_{\eta_1} M)\otimes H_*(\Lambda_{\eta_2} M)\to H_{*-n}(\Lambda_{\eta_1+\eta_2} M).\]
For an $H_1(M)$-infinite non-nilpotent class $\beta$ of degree $k$, $\eta=\eta(\beta)$, it follows that
$\beta^m\in H_{mk-(m-1)n}(\Lambda_{m\eta}M)$ is a nonzero homology class for every
$m\in\mathbb{N}_{>0}$. Let $W\subset~T^*M$ be the Liouville subdomain bounded by a
fiberwise star-shaped hypersurface $\Sigma\subset T^*M$. Then Viterbo's theorem
provides an isomorphism of $\Z_2$-algebras
\begin{equation}\label{eq: vit iso}
    H_*(\Lambda M)\xrightarrow{\cong} SH_*(W),
\end{equation}
which preserves the corresponding $H_1(M;\Z)$-splitting from
\eqref{eq: splitting of free loop space}, where the right-hand side is endowed
with the pair-of-pants product (see Subsections
\ref{subsec: cont and sym homology} and \ref{subsec: SH of a cotangent bdle}, and
\cite{abbondandolo2010floer}).

For each $m\in\mathbb{N}$, consider the spectral invariant $c_{\beta^m}(W)$,
where $\beta^m$ is viewed as a class in the symplectic homology $SH_*(W)$,
together with an action selector, i.e., a Reeb orbit $x_m$ on $\Sigma$ representing the homology class $m\eta$, whose action is $c_{\beta^m}(W)$ and such that
$SH_n(x_m)\neq 0$ (see Section \ref{sec: spectral invariants}). The sublinearity property of spectral invariants implies
\begin{equation*}
    c_{\beta^m}(W) \leq m\, c_{\beta}(W).
\end{equation*}

To conclude, we need to determine whether the orbits $x_m$, or their primitives, are geometrically distinct. We restrict
our attention to $x_p$, for all is a sufficiently large prime numbers $p$. It turns out
that the $x_p$ need not be geometrically distinct for all sufficiently large
primes $p$, and may in fact all be iterates of a single orbit $\gamma$. In this
case, the Reeb orbit $\gamma$ turns out to be a symplectically degenerate maximum, or shortly, a Reeb SDM (see Definition
\ref{def: SDM}) representing a non-torsion homology class. We then apply the
following result, which is of independent interest. In what follows we assume that $c_1(W)=0$.

\begin{maintheorem} \label{thm: SDM generation on a covering}
Let $\gamma$ be an isolated not necessarily simple Reeb SDM orbit on the boundary $\Sigma$ of a Liouville domain $(W,\lambda)$, of dimension at least $4$, representing a non-torsion homology class $\eta = [\gamma]  \in H_1(W;\Z)$. Then
\begin{equation} \label{eq:growth in corollary}
    \liminf_{T\rightarrow \infty} N_\Sigma^{\eta}(T) \frac{\log(T)}{T} > 0.
\end{equation}
\end{maintheorem}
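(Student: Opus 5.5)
We plan to follow the Ginzburg approach to SDMs, carried out in symplectic homology with the action filtration and the $H_1(W;\Z)$-splitting. Write $\gamma = z^{r}$, where $z$ is simple and $r\ge 1$. Let $c=\mathcal{A}(\gamma)$ and $\Delta=\Delta(\gamma)$ be the action and mean index. We also let $\gamma$ have degree $d$, so that $SH_d$ of the isolated orbit $\gamma$ is nonzero and $\Delta = d - n$ (as in Definition~\ref{def: SDM}, with $\dim W = 2n$).

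\textbf{Step 1: the local SDM result.} We first prove the analogue of the Ginzburg--G\"urel statement \cite{ginzburg2013closed}: for every sufficiently large $k$ with $\gamma^k$ isolated and SDM, and every $\epsilon>0$, the filtered symplectic homology in the free homotopy class of $\gamma^k$ satisfies
\[
SH_{kd + 1}^{(kc,\,kc+\epsilon)}(W)\neq 0 .
\]
The construction is as follows. An SDM means the local Floer homology $HF_{d}(\gamma)$ is nonzero in the extremal degree $\Delta+n$. Following \cite{ginzburg2010conley}, we make the contact form $C^2$-close to a local model near $\gamma$ in which, after iteration, the return map has an arbitrarily flat local maximum. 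Then we build a Hamiltonian on the symplectization that is small outside a neighbourhood of $\gamma^k$. We compare the class in $SH^{(kc-\delta,kc+\delta)}$ carried by $\gamma^k$ with the class obtained by a monotone homotopy shifting the action window by $\epsilon$. A Floer continuation map then produces a nonzero map whose target has degree exactly one higher. Because $[\gamma^k]=k\eta$ is non-torsion, the relevant free homotopy class is nontrivial. This means the filtered complex in that class contains only orbits representing classes in $\mathbb{N}\eta$, which are iterates of orbits with homology in $\mathbb{N}\eta$. We expect this step to be the main obstacle: $\gamma$ is not simple and the Reeb flow has no fixed base point. We would handle both issues by passing to the $r$-fold local cover and using that the local homology of $\gamma^k$ is the $\Z_r$-invariant part of that of the iterate of $z$. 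We also need $\dim W\ge 4$ so that the shift in index is geometrically meaningful.

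\textbf{Step 2: new orbits.} For a prime $p$ large, with $p\nmid r$ and such that $\gamma^p$ is still an SDM (admissible iterations are all large primes, by iteration invariance of the SDM property for admissible $k$), Step 1 gives a Reeb orbit $y_p$. It has action in $(pc, pc+\epsilon)$, mean index within $n$ of $p\Delta+1$ in the appropriate sense, and homology class $p\eta$. We claim $y_p$ is not an iterate of any orbit $x$ whose iterate $x^m$ has $m\ge 2$ and $m$ dividing $p$ nontrivially, except possibly $x^p$ with $x$ of action about $c$.

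\textbf{Step 3: the simple parts $y_p=x_p^{m_p}$.} Since $[y_p]=p\eta$ with $\eta$ non-torsion, either $m_p\in\{1,p\}$ or $m_p$ divides a fixed integer (the divisibility of $\eta$). If $m_p=p$ for infinitely many $p$, then $x_p$ has action in $(c,c+\epsilon/p)$ and class $\eta$. By isolation, finiteness of the action spectrum near $c$ forces $x_p$ to be a single orbit $x$ with $\mathcal{A}(x)=c$ in class $\eta$. Mean-index considerations (from $p\Delta(x)$ against $p\Delta+1$, with error at most $n$) give $\Delta(x)=\Delta$ and contradict the degree gap once we also use the SDM degree of $\gamma$. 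So for large $p$ the orbits $x_p$ have period comparable to $p\,c$ up to a bounded factor. Distinct primes give classes $p\eta/m_p$ that are pairwise different, so the $x_p$ are geometrically distinct. Counting primes $p\le T/(c+1)$ and using the prime number theorem yields $N_\Sigma^\eta(T)\gtrsim T/\log T$, which is \eqref{eq:growth in corollary}.
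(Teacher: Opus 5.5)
\textbf{Gap in Step 1.} Your Step 1 is where the argument breaks down. There you claim, for the non-simple SDM $\gamma=z^r$ itself, that filtered symplectic homology in the class of $\gamma^k$ is nonzero one degree above the SDM degree. (That degree should be $k\Delta+n+1$, not $kd+1$.) This is exactly Theorem~\ref{thm: SDM gene orbits}. The paper proves it only for \emph{simple} SDMs and explicitly says it is not known for iterated ones.

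The obstruction is not computing the local homology of $\gamma^k$; it is the construction itself. Ginzburg's argument deforms the contact form near the orbit so that its return map is replaced by the flat models $\eta_i\circ\varphi_{K_i}$ of Proposition~\ref{prop: geome characterization of SDM}, with $\|d^2K_i\|_{\Xi^i}\to 0$. It then realizes the deformed forms as boundaries of $W_-\subset W\subset W_+$ and compares them by transfer maps. For $\gamma=z^r$, the SDM property concerns $\varphi^r$, where $\varphi$ is the return map of $z$. The model Hamiltonians $G\#F$ live on $B\times\R/r\Z$ and need not be invariant under the deck shift $t\mapsto t+1$. So they do not come from a contact form on $N$ near $z(S^1)$, and $W_\pm$ cannot be built in $\widehat W$.

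Your remedy does not repair this. An $r$-fold cover of a tubular neighbourhood is not a Liouville domain, so it gives no access to the global filtered groups and transfer maps on which the argument runs. Moreover, identifying $SH_*(\gamma^k)$ with a $\Z_r$-invariant part is recorded in the paper only as a conjecture, with $\mathbb{Q}$ coefficients, while you work over $\Z_2$. Only the inequality \eqref{eq: symp hom bound by hom of the return map} is available.

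\textbf{Missing idea: a global cover.} The fix is a \emph{global} finite cover, and this is where the non-torsion hypothesis is really used; in your plan it only serves to separate homology classes.
Write $[z]=a\zeta$ in $H_1(W;\Z)/\mathrm{Tor}$ with $\zeta$ primitive, and set $l=ra$. Let $\Phi\colon W'\to W$ be the $l$-sheeted cover defined by the kernel of $\pi_1(W)\to H_1(W;\Z)/\mathrm{Tor}\to\Z\langle\zeta\rangle\to\Z/l$, projecting along a complement of $\zeta$. Then:
\begin{itemize}
\item $(W',\Phi^*\lambda)$ is a Liouville domain.
\item A lift $\gamma'$ of $\gamma$ is a simple closed Reeb orbit whose class is primitive in $H_1(W';\Z)/\mathrm{Tor}$.
\item Its return map is still $\varphi^r$, so $\gamma'$ is a simple SDM.
\end{itemize}
Theorem~\ref{thm: SDM gene orbits} now applies in $W'$. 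For large primes $p$ it yields orbits $\gamma'_p$ in the class $p[\gamma']$, with action in $(p\mathcal{A}(\gamma')+\delta_p,\,p\mathcal{A}(\gamma')+\epsilon)$ and mean index in $[p\Delta+1,\,p\Delta+2n]$. Primitivity reduces non-simplicity to $\gamma'_p=y^p$, which your finiteness and mean-index argument rules out. Projecting to $\Sigma$ loses at most a factor $l$ in the count.

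If you counted downstairs as in your Step 3, you would also have to treat $m_p=pd$ with $d>1$ dividing the divisibility of $\eta$. This case genuinely occurs since $\eta=r[z]$, though the same mean-index argument disposes of it.
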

The key point of the theorem above is that the Reeb SDM does not have to be simple. However, it must represent a non-torsion homology class in the Liouville domain; cf. Corollary \ref{cor: Sdm implies inf many}. Although the proofs are completely different, Theorem \ref{thm: SDM generation on a covering} may be viewed as a Reeb version of the main Theorem in \cite{gurel2013non}; see also \cite[Proposition II]{hingston1993growth} for a similar result concerning the growth \eqref{eq:growth in corollary} in the presence of a special orbit of the geodesic flow for an orientable surface. It follows from the following technical result proved in Section \ref{section: proof of main theorems}; cf. \cite{ginzburg2010conley,ginzburg2013closed}.

\begin{maintheorem} \label{thm: SDM gene orbits}
Let $\gamma$ be a simple Reeb SDM orbit on the boundary of a Liouville domain $(W,\lambda)$ of dimension at least $4$, representing a free homotopy class of loops $ \mathfrak{f}$ of $W$ with action $\mathcal{A}(\gamma)= c$, and mean index $ \Delta(\gamma) = \Delta$. Then for every $\epsilon>0$, there exists $k_0$ such that 
\begin{equation} \label{eq:sym hom non zero}
    SH_{k\Delta + n+1}^{(kc + \delta_k, kc + \epsilon)}(W,\mathfrak{f}^k) \neq 0, \text{ for all } k>k_0 \text{ and some } \delta_k \text{ with } 0<\delta_k< \epsilon. 
\end{equation} 
\end{maintheorem}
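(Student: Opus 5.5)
We follow the Ginzburg scheme for SDMs (as in \cite{ginzburg2010conley, ginzburg2013closed}), carried out in filtered symplectic homology. Fix $\epsilon>0$ and an admissible Hamiltonian $H$ on the completion of $W$. We take $H$ $C^2$-close to a function of $r$ near $\Sigma$, and we arrange that the orbit of $H$ corresponding to $\gamma$ (after a Morse--Bott perturbation of the $S^1$-family) is isolated with action near $c$. The $k$-th iterate $\gamma^k$ is isolated for every $k$, since $\gamma$ is isolated and simple. Because $\gamma$ is an SDM, its local Floer homology is concentrated in the extremal degree $\Delta+n$. The SDM property is also preserved under iteration: $\gamma^k$ is an SDM for all $k$ with $HF_{k\Delta+n}(\gamma^k)\neq 0$, and this is where $\dim W\geq 4$ enters, to exclude the low-dimensional exceptions. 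The target statement is that the degree-$(k\Delta+n+1)$ group of the action window $(kc+\delta_k, kc+\epsilon)$ is nonzero. Equivalently, the nonzero local class of $\gamma^k$ in degree $k\Delta+n$ must be killed by something of slightly larger action and degree one higher.

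\textbf{Step 1 (local model).} Using the SDM property, we find for each large $k$ a local Hamiltonian near $\gamma^k$ with the following features: it is obtained from the local Reeb/Hamiltonian germ by a "hat" modification, it has $\gamma^k$ as a strict local maximum of the action in a suitable normal form, and its local filtered homology in a window $(kc-\delta, kc+\delta)$ equals $HF(\gamma^k)\neq 0$. Concretely, the SDM gives a sequence of local generating functions whose Hessians at $\gamma$ become arbitrarily small in the rotation-adjusted frame. After iterating $k$ times and performing an $S^1$-equivariant shift by a loop of unitary matrices, we get a local model in which the linearized flow is $C^1$-close to the identity. There $\gamma^k$ is a local max of an autonomous function $F_k$ with arbitrarily small oscillation $\ll\epsilon$ on a small neighborhood.

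\textbf{Step 2 (homological argument).} We compare $H$ with a monotone homotopy to a Hamiltonian $K$ that agrees with $H$ away from a small tubular neighborhood $U$ of $\gamma$ (in the class $\mathfrak f$) and is slightly larger on $U$, with the increase bounded by $\epsilon/2$. On $U$ we add a bump so that, in the $k$-th iterated class $\mathfrak f^k$, the orbit $\gamma^k$ bifurcates into a max-type orbit whose local degree is $k\Delta+n$, together with new orbits from the boundary of the bump whose degree is one higher. The action of these boundary orbits lies in $(kc+\delta_k, kc+\epsilon)$. Since the local class of $\gamma^k$ is "maximal", the usual argument then applies. The continuation map restricted to the window around $kc$ sends the local class $[\gamma^k]$ to zero, because in $K$ there are no orbits in degree $k\Delta+n$ with action in $(kc-\delta, kc+\delta)$ near the level that could receive it (this is a max-over-bump argument). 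Hence the long exact sequence of the action filtration forces a nonzero class in degree $k\Delta+n+1$ in the window $(kc+\delta_k,kc+\epsilon)$. Passing to the direct limit over slopes requires a cofinality check: the window is bounded, so for slope larger than $kc+\epsilon$ the filtered groups stabilize. This gives \eqref{eq:sym hom non zero} for $k>k_0$, where $k_0$ is chosen so that the SDM smallness of the Hessian (Step 1) beats $\epsilon/k$-type errors.

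\textbf{Main obstacle.} The hard step is Step 1 combined with the vanishing claim in Step 2. We need the Reeb analogue of Ginzburg's key lemma: for large $k$, the iterated local model can be made to have action oscillation less than $\epsilon$ uniformly. This has to hold despite the Reeb direction being degenerate (the $S^1$-family, so the natural object is the transverse return map together with the contact form), and despite $\gamma$ being non-contractible, so that one cannot cap. First, we would pass to the transverse Poincaré return map of $\gamma$ and apply the Hamiltonian SDM lemma there, with $\Delta$ and the grading measured via a trivialization along $\gamma$ (using $c_1(W)=0$ to make the $\mathfrak f^k$ gradings consistent). Second, we would lift to the Reeb setting by a Morse--Bott/Bourgeois--Oancea type comparison, which shifts degrees by exactly the $+1$ appearing in $n+1$.
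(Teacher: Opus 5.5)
Your proposal has a genuine gap. The mechanism in Step 2 does not produce a nonzero class for $W$, and its starting point is not available.

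Your reduction needs two inputs:
(i) a nonzero local class of $\gamma^k$ in $SH^{(kc-\delta,kc+\delta)}_{k\Delta+n}(W,\mathfrak f^k)$;
(ii) the death of that class in the window $(kc-\delta,kc+\epsilon)$ for $W$ itself.

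For (i), you assume $\gamma^k$ has nonzero local symplectic homology in degree $k\Delta+n$. Iterating the SDM property is fine for the return map $\varphi^k$. For the local symplectic homology of the non-simple orbit $\gamma^k$, however, only the upper bound $\dim SH_*(\gamma^k)\le \dim HF_*(\varphi^k)+\dim HF_{*-1}(\varphi^k)$ is available, which goes the wrong way; the $\Z_k$-invariant formula is conjectural. Two smaller points: isolation of $\gamma^k$ does not follow in general from $\gamma$ being isolated and simple, and $\dim W\ge 4$ is needed only so that a transverse return map exists.

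For (ii), the inference is invalid. If a monotone continuation map into a bump-modified $K$ kills $[\gamma^k]$, that says nothing about $SH^{(kc+\delta_k,kc+\epsilon)}_{k\Delta+n+1}(W,\mathfrak f^k)$. Naturality of the connecting homomorphism transports classes from a model to $W$, not from $W$ to a model. The premise is also false for a bump: the top of the bump, after Morse--Bott splitting, gives exactly a generator of degree $k\Delta+n$ with action near $kc$, and that $\Z_2$ is the class one must propagate. Raising $H$ at $\gamma$ itself instead would shift the action of $\gamma^k$ by an amount proportional to $k$, out of any $k$-independent window.

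The missing idea is a two-sided sandwich $W_-\subset W\subset W_+$, which bypasses local homology of $\gamma^k$ in $W$ entirely.

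1. Use a local model $\alpha=\lambda_0+H\,dt$ in which $H_t$ has a strict maximum $c$ at $0$; this comes from the geometric characterization of SDMs plus Moser's argument.
2. Take $H_+\ge H$ to be a fixed radial bump with top value $c$ and $\pi r^2<\epsilon$.
3. For each $k$, take $H_-=G\#F\le H$. Here $F$ is a bump in a symplectic basis $\Xi$ with $k\|d^2F_0\|_\Xi<\pi$, and $G$ generates a loop with identity linearization at $0$, contractible through such loops.
4. The graphs of $H_\pm$ bound $W_\pm$. Since $H_\pm$ are constant on a shell $Y\subset B$, filtered $SH$ in $\mathfrak f^k$ splits into a $U$-part and a complementary part, for windows of length $<\epsilon_U$ with $\epsilon_U$ independent of $k$.
5. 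For $W_+$, the bump computation shows the connecting map $SH^{(kc+\delta,kc+\epsilon)}_{k\Delta+n+1}\to SH^{(kc-\delta,kc+\delta)}_{k\Delta+n}=\Z_2$ is onto.
6. The Hessian condition keeps $\gamma^k$ nondegenerate of index $n-1$ for $W_-$. Via the homotopy $G^s\#F$, the transfer $\Z_2\to\Z_2$ on the lower window from $W_+$ to $W_-$ is then an isomorphism.
7. By naturality, the transfer on the upper window from $W_+$ to $W_-$ is nonzero. Functoriality of transfer maps factors it through $SH^{(kc+\delta_k,kc+\epsilon)}_{k\Delta+n+1}(W,\mathfrak f^k)$, which is therefore nonzero.

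A one-sided comparison with $W_+$ alone would suffice only if you knew the transfer $W_+\to W$ is nonzero on the lower window, and proving that is exactly what $W_-$ is for. Accordingly, the SDM smallness of the Hessian is used to control the index of $\gamma^k$ in the lower model. It is not used to control action oscillation as in your Step 1; uniform-in-$k$ action control comes from the fixed upper bump and the shell.
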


We note that it is essential for the proof of Theorem \ref{thm: SDM gene orbits} that the Reeb SDM is simple. Although expected, it is not clear whether the theorem remains true for non-simple Reeb SDMs, and a different argument would likely be needed for a proof. The following corollary is a direct consequence of the above theorem; see \cite[Corollary~4.3]{ginzburg2015conley} for a proof.

\begin{maincorollary} \label{cor: Sdm implies inf many}
If a Liouville domain $(W,\lambda)$ as in the setting of Theorem \ref{thm: SDM gene orbits} admits a simple Reeb SDM, then the Reeb flow has infinitely many simple periodic orbits.
\end{maincorollary}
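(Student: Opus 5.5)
Suppose, for contradiction, that the Reeb flow on $\Sigma=\partial W$ has only finitely many simple periodic orbits $\gamma_1=\gamma,\gamma_2,\dots,\gamma_r$. Every closed orbit is then an iterate $\gamma_i^m$. The plan is to play the nonvanishing statement \eqref{eq:sym hom non zero} of Theorem \ref{thm: SDM gene orbits} against the action and index constraints this finiteness imposes.

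Fix $\epsilon>0$ smaller than half the minimal gap in the action spectrum near $c$. For each $k>k_0$, Theorem \ref{thm: SDM gene orbits} gives
\[
SH^{(kc+\delta_k,\,kc+\epsilon)}_{k\Delta+n+1}(W,\mathfrak f^k)\neq 0 .
\]
By the standard argument that filtered symplectic homology localizes to the local Floer homologies of orbits with action in the window (via an isolating perturbation and the spectral sequence for the action filtration), there is an orbit $y_k$ in class $\mathfrak f^k$ with
\[
\mathcal A(y_k)\in(kc,\,kc+\epsilon), \qquad SH_{k\Delta+n+1}(y_k)\neq 0 .
\]
Support of local homology gives $|\Delta(y_k)-(k\Delta+1)|\le n-1$. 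Moreover, since $\delta_k>0$, the orbit $y_k$ is not $\gamma^k$, whose action equals $kc$ exactly. With only finitely many simple orbits, we can write $y_k=\gamma_{i_k}^{m_k}$ with $i_k\neq 1$ or $m_k\neq k$.

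The counting step is where the finiteness assumption is used, following the scheme of \cite[Corollary~4.3]{ginzburg2015conley}. Both action and mean index are homogeneous under iteration:
\[
\mathcal A(\gamma_i^m)=m\,a_i,\qquad \Delta(\gamma_i^m)=m\,\Delta_i .
\]
Pass to a subsequence of $k$ along which $i_k=i$ is constant. The action constraint
\[
0< m_k a_i - kc<\epsilon
\]
together with the index constraint $|m_k\Delta_i-k\Delta-1|\le n-1$, holding for infinitely many $k$, forces $a_i/c$ and $\Delta_i/\Delta$ to be compatible ratios. Set $\rho=a_i/c$.

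First suppose $\rho$ is irrational. Then $m_k a_i-kc$ is still constrained only to lie in $(0,\epsilon)$, and that alone does not immediately contradict. The real contradiction comes from a sharper requirement: the action must lie in $(kc+\delta_k,\,kc+\epsilon)$ for \emph{every} $\epsilon$. Choose $\epsilon\to0$ with $k_0(\epsilon)$. Each fixed pair $(i,m)$ can serve only finitely many $k$, since $m a_i\approx kc$ determines $k$. So for each small $\epsilon$ we obtain infinitely many $k$ with
\[
0<m a_i-kc<\epsilon ,
\]
that is, $\mathrm{dist}(k c/a_i,\mathbb Z)$ is small in a one-sided way.

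If instead $a_i/c$ is rational, this quantity is eventually bounded below or zero. The zero case means the action is exactly $kc$, which is excluded. When $a_i/c$ is irrational, one uses the index constraint. Dividing by $k$ gives
\[
\frac{m_k}{k}\to \frac{c}{a_i},\qquad \frac{m_k}{k}\to\frac{\Delta}{\Delta_i},
\]
so $\Delta_i/a_i=\Delta/c$. Combined with the bound
\[
|m_k\Delta_i-k\Delta-1|\le n-1,
\]
this gives
\[
(m_k a_i-kc)\,\frac{\Delta}{c}\in[2-n,\,n],
\]
whereas $m_k a_i-kc\in(0,\epsilon)$. This is consistent, so index alone is insufficient in this case. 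Here I would invoke the standard fact from \cite{ginzburg2015conley}: with finitely many simple orbits, the action–index resonance together with the SDM property forces $\gamma$ itself to satisfy a contradicting condition.

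The main obstacle is therefore exactly this last step, excluding another orbit $\gamma_i$ whose action-to-index ratio equals that of $\gamma$. I would try first to realize $\delta_k$ as arbitrarily small and to use local homology of iterates (persistence of local Floer homology under admissible iterations) to rule out $y_k$ being an iterate of a fixed orbit other than $\gamma$. Since the corollary is stated as a direct consequence citing \cite[Corollary~4.3]{ginzburg2015conley}, I would ultimately reproduce that argument verbatim with \eqref{eq:sym hom non zero} in place of its filtered contact homology input.
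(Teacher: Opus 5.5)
Your setup is the right one: argue by contradiction, use Theorem \ref{thm: SDM gene orbits} to get orbits $y_k=\gamma_{i_k}^{m_k}$ in $\mathfrak f^k$ with $\mathcal A(y_k)\in(kc+\delta_k,kc+\epsilon)$ and $SH_{k\Delta+n+1}(y_k)\neq 0$, then use homogeneity of action and mean index. That homogeneity holds for mean indices computed in the global trivialization, which is the one in which $\Delta$ is defined. However, the proposal stops exactly at the decisive step and hands it off to an unspecified ``standard fact''.

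You get stuck because your index window is wrong. In this paper's grading, \eqref{eq:local symp hom support} gives $\operatorname{supp} SH_*(y)\subset[\Delta(y)-n+1,\Delta(y)+n]$. So $SH_{k\Delta+n+1}(y_k)\neq 0$ forces
\[
k\Delta+1\le \Delta(y_k)\le k\Delta+2n ,
\]
and not $|\Delta(y_k)-(k\Delta+1)|\le n-1$. The one-sided bound $\Delta(y_k)-k\Delta\ge 1$ is the entire content of the SDM phenomenon: the generated class sits one degree above the top of the support of any orbit with mean index $k\Delta$. Your window $[2-n,n]$ contains $0$ since $n\ge 2$, which is why the index constraint looked vacuous to you.

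With the correct bound the argument closes immediately. You need no rational/irrational case split, no shrinking of $\epsilon$, and no local homology of iterates. Fix $\epsilon$ with $\epsilon\,|\Delta|<c$; this is allowed because $\epsilon$ in Theorem \ref{thm: SDM gene orbits} is arbitrary. Pass to a subsequence with $i_k=i$ fixed. Dividing $m_k a_i\in(kc,kc+\epsilon)$ and $m_k\Delta_i\in[k\Delta+1,k\Delta+2n]$ by $k$ gives $\Delta_i/a_i=\Delta/c$, as you found. Hence
\[
1\le \Delta(y_k)-k\Delta=\frac{\Delta}{c}\bigl(\mathcal A(y_k)-kc\bigr)\le \frac{|\Delta|\,\epsilon}{c}<1 ,
\]
a contradiction. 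This is the argument behind \cite[Corollary~4.3]{ginzburg2015conley}, which the paper cites. It is also the same action--index mechanism the paper uses at the end of the proof of Theorem \ref{thm: SDM generation on a covering}.
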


It is conjectured that the presence of an iterated Reeb SDM implies the existence of infinitely many simple Reeb orbits. Theorem \ref{thm: SDM generation on a covering} can be seen as a step towards the proof of this conjecture.
\begin{remark}
 We note that the growth rate \eqref{eq:growth} remains true for any Liouville
domain $(W,\lambda)$ with $c_1(W)=0$, provided there exists a non-nilpotent
symplectic homology class in $SH_*(W;\eta)$ with respect to the pair-of-pants
product, where $\eta\in H_1(W;\Z)$ is non-torsion. This is clear from the proof
idea of Theorem \ref{thm: main_result} outlined above, since the free loop space
homology is used only through Viterbo's isomorphism to guarantee the existence
of such a symplectic homology class.
\end{remark}

\noindent{\bf Acknowledgments:}
The authors thank Viktor Ginzburg and Mark McLean, their respective advisors, for the support during this project. We also thank Pedro Salomão and Shuhao Li for useful discussions, and the organizers of the conference \textit{New developments in symplectic geometry}, in Seoul National University, where part of this work was carried out.

\section{Preliminaries}
Let $(W,\lambda)$ be a Liouville domain with boundary $\partial W=N$,
and let $\alpha=\lambda|_N$ be the induced contact form. We denote by
$\widehat{W}$ the completion of $(W,\lambda)$, defined by
\[\widehat{W}= W \cup_N \bigl(N\times [1,\infty)\bigr),\]
equipped with the Liouville form $\widehat{\lambda}$, which is given by
$\widehat{\lambda}=r\alpha$ on the cylindrical end, where $r$ is the coordinate
on $[1,\infty)$, and by $\widehat{\lambda}=\lambda$ on $W$. Throughout this
paper, we assume for simplicity that $c_1(TW)=0$.

For the contact boundary $(N,\alpha)$, we denote by $R_{\alpha}$ the Reeb vector field, defined by
$$\text{d}\alpha(R_\alpha,\cdot)=0 \text{ and } \alpha(R_\alpha)\equiv 1.$$ A \textit{Reeb orbit} is a closed orbit of $R_{\alpha}$. More precisely, it is
a map $\gamma\colon \R/T\Z \to N$, for some $T>0$, such that
$\gamma'(t)=R_{\alpha}(\gamma(t))$. We say that a Reeb orbit is
\textit{simple} (or \textit{prime}) if it is an embedding in $N$; otherwise,
we say that the orbit is \textit{iterated}. Note that any Reeb orbit is either
simple or an $m$-fold iterate of a simple Reeb orbit, for some
$m\in\mathbb{N}$.

The number $T>0$ is called the \textit{period} of $\gamma$, and the \textit{minimal
period} is the period of the underlying simple Reeb orbit whose iterate equals
$\gamma$. We denote the period by $\mathcal{A}(\gamma)$ and refer to it as the
\textit{action} (or \textit{period}) of $\gamma$. We also denote by
$\mathcal{S}(\alpha)$ the \textit{spectrum} of $\alpha$, i.e., the set of
actions (periods) of all Reeb orbits. Note that the action of a Reeb orbit
$\gamma$ is also given by
\[\mathcal{A}(\gamma)=\int_{\R/T\Z}\gamma^*\alpha.\]

We are mainly interested in the case where $W\subset T^*M$ is a smooth
fiberwise star-shaped domain with respect to the zero section and all domains are assumed to be smooth. For such
domains, we have $c_1(TW)=0$. In what follows, we recall the definition of
symplectic homology and collect the facts that will be used later. Our goal is
to fix notation and record the statements needed in the proofs. For further
details, see \cite{cieliebak2018symplectic, bourgeois2009symplectic} and
references therein. Lastly, we recall that, as mentioned in the introduction, all homology groups are taken with coefficients in $\Z_2$ throughout this paper, unless otherwise stated.

\subsection{Hamiltonians and action function} 
Let $H\colon \widehat{W}\times S^1\to \R$ be a Hamiltonian on $\widehat{W}$. We
denote by $X_H$ its Hamiltonian vector field, defined by
\[
\omega(X_{H_t},\cdot)=-\,dH_t.
\]
Recall that the \textit{action functional} associated to $H$ is
\begin{equation}\label{eq: action functional}
\mathcal{A}_H(x)=\int_x \widehat{\lambda}-\int_{S^1} H\bigl(x(t)\bigr)\,dt,
\qquad x\in C^\infty(S^1,\widehat{W}).
\end{equation}
We denote by $\mathcal{P}(H)$ the set of critical points of $\mathcal{A}_H$,
i.e., the $1$-periodic orbits of $H$, and by $\mathcal{S}(H)$ the
\textit{spectrum} of $H$, i.e., the set of critical values of
\eqref{eq: action functional} (equivalently, the set of actions of the
$1$-periodic orbits of $H$).

We say that $H$ is \textit{linear at infinity} if there exist constants
$a,c\in\R$ such that $H(r,t)=ar-c$ on $N\times [r_0,\infty)$ for some $r_0>1$.
The constant $a$ is called the \emph{slope} of $H$, and we write
$\operatorname{slope}(H):=a$.

We consider the class of time-independent Hamiltonians $H\colon \widehat{W}\to\R$
that are constant on $W$ and depend only on the radial coordinate $r$ on
the cylindrical end $N\times[1,\infty)$, i.e.,
\[H|_{N\times[1,\infty)} = h(r),\]
where $h$ satisfies the following conditions:
\begin{itemize}\label{eq: admissible Ham}
    \item \textit{Monotonicity:} $h'(r)\ge 0$ for $r\ge 1$;
    \item \textit{Convexity:} $h''(r)\ge 0$ for all $r\ge 1$, and $h''(r)>0$ on
    $(1,r_{\max})$ for some $r_{\max}>1$ (depending on $h$);
    \item \textit{Linear at infinity:} there exist constants $a,c\in\R$ such
    that $h(r)=ar-c$ for $r\ge r_{\max}$.
\end{itemize}
Throughout, we assume $\operatorname{slope}(H)\notin \mathcal{S}(\alpha)$
whenever necessary. We refer to $H$ as \textit{admissible} if $H|_W<0$, and as
\textit{semi-admissible} if $H|_W\equiv 0$. Note that for any admissible
Hamiltonian $H$, the Hamiltonian $H - H|_{W}$ is semi-admissible. Hence, the
action spectra of these two Hamiltonians agree up to a shift (and
likewise for filtered Floer homology).

For a (semi-)admissible Hamiltonian $H$, the Hamiltonian vector field is given by
\[
X_H = h'(r)\,R_{\alpha},
\]
where $R_{\alpha}$ is the Reeb vector field. In this case, a Hamiltonian orbit
corresponds to a Reeb orbit. More precisely, given a $T$-periodic Reeb orbit
$\gamma$ with $T<\operatorname{slope}(H)$, the curve
\[
\tilde{\gamma}(t)=(\gamma(Tt),r), \qquad t\in \R/\Z,
\]
is a $1$-periodic orbit of $H$, where $h'(r)=T$. In particular,
$\tilde{\gamma}$ lies in the shell $1<r<r_{\max}$. Observe that a Hamiltonian
which is linear at infinity has periodic orbits only in a compact subset of
$\widehat{W}$, provided that $\operatorname{slope}(H)\notin \mathcal{S}(\alpha)$.

For $H$ and $\tilde\gamma(t)$ as above, the action is given by
$$\mathcal{A}_H(\tilde\gamma) = \int_0^1 r(x(t))\alpha(z'(t))dt - \int_0^1 h(r(x(t)))dt = rh'(r) - h(r).$$
Since it depends only on the radial coordinate, we write
$$A_h: [1,\infty) \rightarrow [0,\infty), \ A_h(r) = rh'(r) - h(r).$$
Note that $A_h$ is monotone increasing, because
$$A_h'(r) = rh''(r) + h'(r) - h'(r) = rh''(r) \geq 0.$$
We also consider the following modification of the action function, called the \textit{reparametrization function}
\begin{equation} \label{eq: a_h function}
a_h = A_h \circ (h')^{-1} : [0,a] \rightarrow [0,c].
\end{equation}
This function turns periods of Reeb orbits into the actions of the corresponding
Hamiltonian orbits, i.e., $a_h(\mathcal{S}(\alpha) \cap (0,a]) = \mathcal{S}(H) \cap (0,c]$. 

There are four properties of the reparametrization function $a_h$ that
will be useful throughout the paper; see \cite{cineli2026invariant,fernandes2024barcode}
for further details.
\begin{enumerate}
\item[(i)] The reparametrization function is bi-Lipschitz, i.e., for
$0\le t\le t'$
\begin{equation}\label{eq: bilipchitz}
t'-t \le a_h(t')-a_h(t)\le r_{\max}(H)\,(t'-t).
\end{equation}

\item[(ii)] If $H$ and $K$ are semi-admissible Hamiltonians with $H\le K$, then
\[
a_h \ge a_k \quad \text{on } [0,\operatorname{slope}(H)].
\]

\item[(iii)] For all $t\in[0,\operatorname{slope}(H)]$,
\begin{equation}\label{eq:inequality action function}
t \le a_h(t).
\end{equation}

\item[(iv)] For all $t\in[0,\operatorname{slope}(H)]$,
\begin{equation}\label{eq: A_h(t)convergetot}
a_{s h}(t)\to t
\end{equation}
as $s\to +\infty$.
\end{enumerate}

To prove \eqref{eq: bilipchitz}, note that for $t=h'(r)$ and $t'=h'(r')$, since
$h'$ is nondecreasing, we have $1\le r\le r'$. Moreover,
\[a_h(t')-a_h(t)=
\bigl(sh'(s)-h(s)\bigr)\Big|_{r}^{r'}
=\int_{r}^{r'}\bigl(sh'(s)-h(s)\bigr)'\,ds
=\int_{r}^{r'} s h''(s)\,ds \ge t'-t.
\]
Furthermore, by the convexity of $h$,
\[a_h(t')-a_h(t)
= \int_{r}^{r'} s h''(s)\,ds
\le r_{\max}\int_{r}^{r'} h''(s)\,ds
= r_{\max}(h)\bigl(h'(r')-h'(r)\bigr).\]

In order to prove \eqref{eq:inequality action function} and \eqref{eq: A_h(t)convergetot}, notice that for $t=h'(r)$, minus the action $-a_h(t)$ is the intersection of the tangent line to $h$ at $(r,h(r))$ with the ordinate. If $H \leq K$, and $h'(r_0) =t=k'(r_1)$, then the tangent lines to $h$ and $k$ at $(r_0,h(r_0))$ and $(r_1,k(r_1))$ respectively are parallel since they have the same slope, but the tangent line to $k$ is above the one to $h$ since $h \leq k$, and therefore $- a_k(t) \geq - a_h(t)$. Since $h$ is strictly convex in $(1,r_{\max}(H))$, the tangent line to $h$ at any point $(r,h(r))$ with $r > 1$ intersect the abscissa in $(1,\infty)$, so it is on the right to the line $r \rightarrow tr -t$, which is the line that passes through $1$ with slope $t$. The intersection of this line with the ordinate is $-t$, therefore $ -t \geq - a_h(t)$. 

\begin{center}
    \begin{tikzpicture} \label{functionA_H}
        \begin{axis}[xmin = -0.1, xmax = 3, axis lines = left, xlabel = $r$ , xtick = {1,2}, xticklabels = {$1$, $r_{\max}$}, xticklabel style={above left} ,ymin = -1.5, ymax = 3, ytick = \empty, axis lines = middle]
        \addplot[color = black, samples = 50, domain = 1:2 ]{(x-1)^2};
        \addplot[domain =2:3, color = black, samples = 50]{2*(x - 2) + 1}node[right,pos = 0.7]{$H$};
         \addplot[domain =-1:3, color = black, samples = 50]{(x - 3/2) + 1/4}; 
        \addplot[color  
        = black, samples = 50, domain = 1:2 ]{2*(x-1)^2};
        \addplot[domain = 2:3, color = black, samples = 50]{4*(x - 2) + 2}node[right,pos = 0.1]{$K$};
        \addplot[domain =-1:3, color = black, samples = 50]{1*(x -5/4) + 1/8}; 
        \addplot[domain = -1:3, color = black]{x-1}; 
         \node[pin = -1: {$-a_{h}(t)$}] at (axis cs:0, -5/4) {};
         \node[pin = 3: {$-a_{k}(t)$}] at (axis cs: 0,-9/8) {};
         \node[pin = 80: {$-t$}] at (axis cs:0, -1) {};
        \end{axis}
        \node[below] at (current bounding box.south) {Figure \ref{functionA_H}};
    \end{tikzpicture}
\end{center}

\subsection{Mean and Conley-Zehnder indices}  \label{subsection: indexes}
In this subsection, we fix notation and recall some facts about the mean index and the Conley-Zehnder index. Our conventions are closest to those in
\cite{ginzburg2020lusternik}. For further background and details, see
\cite{gutt2014generalized, abbondandolo2001morse, salamon1999lectures} and the references therein.

We normalize the Conley-Zehnder index $\mu_{cz}$ by requiring that the origin, viewed as a periodic orbit of a small non-degenerate quadratic form $Q$ on $\R^{2n}$, has index $sgn(Q)/2$, where the signature $sgn(Q)$ is the number of positive eigenvalues minus the number of negative eigenvalues of $Q$. In particular, the Conley--Zehnder index of a non-degenerate critical point of a $C^2$-small Hamiltonian on $\R^{2n}$ equals $n- \mu_{\mathrm{Morse}}$, where
$\mu_{\mathrm{Morse}}$ is the Morse index. We take the opportunity to emphasize that the unit in $SH_*(W)$ (defined later) lies in degree $n$, i.e., $1\in SH_n(W)$. Moreover, it is represented by a local minimum of a Morse function on $W$.

Recall that the \textit{lower} and \textit{upper Conley-Zehnder indices} are defined by 
\begin{equation} \label{eq: upper and lower cz index}
    \mu_- (\Phi):= \liminf
_{\hat{\Phi}\rightarrow\Phi} \mu_{cz}(\hat{\Phi}) \text{ and } \mu_+ (\Phi):= \limsup
_{\hat{\Phi}\rightarrow\Phi} \mu_{cz}(\hat{\Phi}),
\end{equation}
where both limits are taken over non-degenerate paths of symplectic matrices $\hat{\Phi}$ converging to $\Phi$. One can check that $\mu_{cz}(\Phi) = \mu_{\pm}(\Phi)$ whenever $\Phi$ is a non-degenerate path. By construction, $\mu_{\pm}$ are, respectively, upper and lower semi-continuous extension of $\mu_{cz}(\Phi)$.

The mean index of $\Phi \in \widetilde{Sp}(2n)$ is defined as
\begin{equation} \label{eq:mean index def}
    \Delta(\Phi) = \lim_{k\rightarrow \infty} \frac{\mu_{\pm}(\Phi^k)}{k}.
\end{equation}
This is the unique, up to normalization,  continuous quasi-morphism
$$\Delta : \widetilde{Sp}(2n) \rightarrow \R,$$
see, e.g., \cite{salamon1992morse,ginzburg2020lusternik,cineli2026invariant} for more details. It follows from the definition \eqref{eq:mean index def} that the mean index is homogeneous, i.e.,
\begin{equation} \label{eq: mean index homogeneous}
    \Delta(\Phi^k) = k\Delta(\Phi),\ k\in\Z,
\end{equation}
and satisfies
\begin{equation} \label{eq: mean index and conley zehnder index}
    \Delta(\Phi) - n \leq \mu_-(\Phi) \leq \mu_+(\Phi) \leq \Delta(\Phi) + n.
\end{equation}
See \cite{salamon1992morse} for a proof of \eqref{eq: mean index and conley zehnder index}. 

We recall the definition of the grading for Hamiltonian and Reeb orbits. Let
$(W,\lambda)$ be a Liouville domain with $c_1(TW)=0$, and fix a trivialization of the anti-canonical complex line bundle of $W$.

For a one-periodic orbit $x$ of a Hamiltonian $H$, the tangent bundle along $x$
has an induced trivialization (well defined up to homotopy). Using this
trivialization, we associate to $x$ a path of symplectic matrices
$\Phi_x\in \widetilde{Sp}(2n)$ corresponding to the linearized Hamiltonian flow
$d\phi_H^t(x(0))$. We then define the indices $\mu_{cz}(x)$, $\mu_\pm(x)$, and
$\Delta(x)$ by
\[\mu_{cz}(x)=\mu_{cz}(\Phi_x), \qquad \mu_\pm(x)=\mu_\pm(\Phi_x), \qquad
\Delta(x)=\Delta(\Phi_x).\]
Similarly, for a Reeb orbit $\gamma$ on the contact boundary of a Liouville domain $(N,\xi=\text{ker}(\alpha))$, choose a trivialization of the top
exterior power of $\xi$ and consider the induced
trivialization of $\xi$ along $\gamma$. This determines a path of symplectic
matrices $\Phi_\gamma$ associated to the linearized transverse flow
$d\phi_{R_\alpha}^t(\gamma(0))\big|_{\xi}$ and the indices of $\gamma$ are then defined as before.

\begin{rmk}
For a cotangent bundle $\widehat W=T^*M$, there are special choices of
trivializations of the anti-canonical bundle, and hence of $TT^*M$ along
orbits; see \cite{abbondandolo2006floer}. Throughout, we work with these
specific trivializations for a cotangent bundle.
\end{rmk}

\subsection{Floer homology}

We briefly recall the construction of filtered Floer homology. Along this subsection, $H$ is a fixed linear at infinity non-degenerate Hamiltonian. In particular, $H:\widehat{W}\times S^1 \rightarrow\R$ is time-dependent and $\text{slope}(H)\notin \mathcal{S}(\alpha)$. 

Denote by $\mathcal{J}(W)$ the space of \textit{admissible almost complex
structures} on $\widehat{W}$, i.e., the set of $S^1$-dependent $d\lambda$-compatible
almost complex structures $J_t$ on $\widehat{W}$ such that there exists a compact
subset $K\subset \widehat{W}$ with:

\begin{itemize}
    \item \textit{cylindrical at infinity:} $J$ preserves the contact distribution
    $\xi$ and satisfies $J(r\partial_r)=R_\alpha$ on $\widehat{W}\setminus K$;
    \item $J_t$ is $S^1$-independent on $\widehat{W}\setminus K$.
\end{itemize}
For $J\in \mathcal{J}(W)$, a \textit{Floer cylinder} is a smooth map
$u\colon \R\times S^1\to \widehat{W}$ solving \textit{Floer's equation}
\begin{equation}\label{eq: floer eq}
\partial_s u + J(u)\bigl(\partial_t u - X_H(u)\bigr)=0,
\end{equation}
where $s\in\R$ and $t\in S^1$. For $x_\pm\in \mathcal{P}(H)$, we define the
\textit{moduli space of connecting Floer cylinders} by
\begin{align}\label{eq: moduli}
 \mathcal{M}(x_-,x_+,H,J) =\Bigl\{u\colon \R\times S^1\to \widehat{W}\ \Big|\ u \text{ satisfies }
\eqref{eq: floer eq}\ \text{and}\ \lim_{s\to\pm\infty}u(s,\cdot)=x_\pm\Bigr\}\Big/\R,
\end{align}
where $\R$ acts by translation in the $s$-coordinate. For generic
$J\in\mathcal{J}(W)$, these moduli spaces are cut out transversely, and hence
are smooth manifolds of dimension
\[\dim \mathcal{M}(x_-,x_+,H,J)=\mu_{cz}(x_+) - \mu_{cz}(x_-) - 1.\]
We refer to such generic $J$ as \textit{regular}, and to $(H,J)$ as a
\textit{regular pair}.

For any $a,b\notin \mathcal{S}(H)$ with $a<b$, the \textit{filtered Floer chain
complexes} are defined as $\Z_2$-modules by
\[CF^b_*(H,J)=\bigoplus_{\substack{\mu_{cz}(x)=*\\ \mathcal{A}_H(x)<b}} \Z_2\cdot x,
\qquad
CF^{(a,b)}_*(H,J)=CF^b_*(H,J)/CF^a_*(H,J),\]
where the generators $x$ are $1$-periodic orbits of $H$. The differential
$\partial\colon CF^b_*(H,J)\to CF^b_{*-1}(H,J)$ acts on generators by
\[\partial x_+ = \sum_{\mu_{cz}(x_-)=\mu_{cz}(x_+)-1}
\#_2 \mathcal{M}(x_-,x_+,H,J)\, x_-,\]
where $\#_2 \mathcal{M}(x_-,x_+,H,J)$ denotes the mod $2$ count of elements in
the moduli space. The differential satisfies $\partial\circ\partial=0$, and
thus we obtain a chain complex $(CF^b_*(H,J),\partial)$ whose homology is the
\textit{filtered Floer homology} $HF^b_*(H,J)$. Note that $\partial$ decreases
both the action $\mathcal{A}_H$ and the Conley-Zehnder index $\mu_{cz}$. In
particular, $(CF^{(a,b)}_*(H,J),\partial)$ is a chain complex whose homology is denoted by $HF^{(a,b)}_*(H,J)$.

For $a<b<c$ with $a,b,c\notin \mathcal{S}(H)$, the short exact sequence
\[0 \rightarrow CF^{(a,b)}_*(H,J) \rightarrow CF^{(a,c)}_*(H,J) \rightarrow
CF^{(b,c)}_*(H,J) \rightarrow 0\]
induces a long exact sequence in homology,
\begin{equation}\label{eq: long exa seq for Hamiltonian}
\cdots \rightarrow HF_*^{(a,b)}(H,J) \rightarrow HF_*^{(a,c)}(H,J) \rightarrow
HF_*^{(b,c)}(H,J) \rightarrow HF_{*-1}^{(a,b)}(H,J) \rightarrow \cdots .
\end{equation}

Floer homology admits a decomposition with respect to $H_1(W;\Z)$,
which we write as
\begin{equation}\label{eq: H_1 splitting of HF}
HF^{(a,b)}(H,J)=\bigoplus_{\eta\in H_1(\widehat{W};\Z)} HF^{(a,b)}(H,J;\eta),
\end{equation}
where the $\eta$-factor takes into account only $1$-periodic orbits in
$\mathcal{P}(H)$ representing the homology class $\eta$. An analogous
decomposition can be made in terms of free homotopy classes of loops in $\widehat{W}$.

\subsection{Continuation maps and symplectic homology} \label{subsec: cont and sym homology}
We closely follow \cite[Section~4.4]{cieliebak2018symplectic}. Let $H_\pm$ be
non-degenerate Hamiltonians on $\widehat{W}$ that are linear at infinity and
satisfy $H_-\le H_+$, and let $J_\pm\in\mathcal{J}(W)$ be such that
$(H_\pm,J_\pm)$ are regular pairs. In particular,
$\operatorname{slope}(H_-)\le \operatorname{slope}(H_+)$. Consider a homotopy
of Hamiltonians $H_s$ such that $H_s=H_+$ for $s\ll 0$ and $H_s=H_-$ for
$s\gg 0$, with $H_s$ linear at infinity and $\partial_s H_s\le 0$. An
interpolating almost complex structure $J_s$ between $J_+$ and $J_-$ then
induces a chain map
\begin{equation}\label{eq: chain continuation map}
CF^{(a,b)}_*(H_-,J_-)\to CF^{(a,b)}_*(H_+,J_+),
\end{equation}
unique up to chain homotopy, defined by counting solutions of a parametrized
version of Floer's equation \eqref{eq: floer eq}. Then the induced map on filtered Floer homology,
\begin{equation}\label{eq: continuation map}
HF^{(a,b)}_*(H_-,J_-)\to HF^{(a,b)}_*(H_+,J_+),
\end{equation}
does not depend on the choice of interpolation data and is called the
\textit{continuation map}.

We make two remarks. First, a parametrized Floer cylinder counted in
\eqref{eq: chain continuation map} has non-increasing energy along itself, due to the monotonicity assumption $\partial_s H_s\le 0$. In particular, the filtered continuation map is well defined. Second, the same monotonicity assumption is crucial for a
no-escape lemma for Floer solutions, which in turn implies compactness of the
relevant moduli spaces entering \eqref{eq: continuation map}.

Continuation maps are functorial with respect to concatenation of the
interpolation data. One immediate consequence is that $HF^{(a,b)}(H,J)$ does
not depend on the choice of $J\in\mathcal{J}(W)$, and henceforth we omit $J$
from the notation.

Filtered Floer homology extends to a degenerate Hamiltonian $H$ that is linear
at infinity, with $\operatorname{slope}(H)\notin \mathcal{S}(\alpha)$ and
$a,b\notin \mathcal{S}(H)$, as follows. Let $\tilde H$ be a non-degenerate,
linear-at-infinity, $C^2$-small perturbation of $H$ with
$\operatorname{slope}(\tilde H)=\operatorname{slope}(H)$. We define
$HF^{(a,b)}(H)$ to be $HF^{(a,b)}(\tilde H)$ for such a perturbation $\tilde H$. By functoriality of continuation maps, $HF^{(a,b)}(H)$ is independent of the choice of $\tilde H$. Moreover, continuation maps are still defined for pairs $(H_-,H_+)$ of linear at infinity Hamiltonians as above satisfying $H_-\le H_+$.

For any $a<a'$ and $b<b'$ not in $\mathcal{S}(H)$, there are
``inclusion'' maps
\[HF^{(a,b)}_*(H)\to HF^{(a',b')}_*(H),\]
defined, for non-degenerate Hamiltonians linear at infinity, by inclusion and
quotients of the appropriate action-filtered subcomplexes, and then extended
to (semi-)~admissible Hamiltonians via suitable perturbations. These maps are
compatible with continuation maps and allow us to extend the definition of
filtered Floer homology for a (semi-)admissible Hamiltonian $H$ with
$\operatorname{slope}(H)\notin \mathcal{S}(\alpha)$ when $a$ or $b$ lies in
$\mathcal{S}(H)$. We do so by setting
\[HF^{(a,b)}_*(H)=\varinjlim_{\substack{a<a'\\ b<b'}} HF^{(a',b')}_*(H),\]
where $a',b'\notin \mathcal{S}(H)$. Once again, there continuation maps
for these homologies and they satisfy the same formal properties.

It is convenient for us to remove the condition
$\operatorname{slope}(H)\notin \mathcal{S}(\alpha)$ for the sake of cleaner
statements. We do this by setting
\[HF^t(H)=\varinjlim_{\tilde{H}\le H} HF^t(\tilde{H}),\]
where the direct limit is taken over Hamiltonians $\tilde{H}$ satisfying
$\operatorname{slope}(\tilde{H})\notin \mathcal{S}(\alpha)$.

Finally, for any $-\infty \le a<b\le \infty$, we define the \textit{(filtered)
symplectic homology of $W$} by
\[SH^{(a,b)}_*(W):=\varinjlim_{H} HF^{(a,b)}_*(H),\]
where the direct limit is taken over Hamiltonians $H$ that are linear at
infinity and satisfy $H|_{W}<0$. In particular, since admissible Hamiltonians
form a cofinal family among these, we can take the direct limit over that class of Hamiltonians.

 With this definition, for any admissible Hamiltonian $H$ with slope $\text{slope}(H)=a$ and any $t<a$, there is a natural map 
\begin{equation} \label{eq: inclusion maps}
    i_H^t: HF^{t}_*(H) \rightarrow SH^t_*(W).
\end{equation}

\begin{remark}[Cofinal sequences of Hamiltonians]\label{rmk: cofinal sequence/families}
As mentioned at the beginning of this section, if $H$ is admissible, then
$H-H|_{W}$ is semi-admissible. Choose sequences $s_i\to\infty$ and
$\epsilon_i\to 0$. For any sequence $H_{s_i}$ of semi-admissible Hamiltonians
with $\operatorname{slope}(H_{s_i})=s_i$, the sequence $H_{s_i}-\epsilon_i$ is cofinal, and the filtered homologies of $H_{s_i}$ and $H_{s_i}-\epsilon_i$
agree up to a shift by $\epsilon_i$ of the action window. In particular, for
any $-\infty<a<b<\infty$,
\[SH^{(a,b)}_*(W)=\varinjlim_{i\to\infty} HF^{(a,b)}_*(H_{s_i}).\]
In other words, we may use semi-admissible Hamiltonians to compute filtered symplectic homology in bounded action intervals. Another convenient cofinal family is obtained by fixing a (semi-)admissible
Hamiltonian $H$ and considering the multiples $sH$ for $s>0$. In this case,
\[SH^{(a,b)}_*(W)=\varinjlim_{s\to\infty} HF^{(a,b)}_*(sH).\]
See \cite{cineli2025barcode, cineli2026invariant, fernandes2024barcode} for
detailed proofs.
\end{remark}

Note that, similarly to Floer homology, there are ``inclusion'' maps for
symplectic homology: for $a<a'$ and $b<b'$,
$$SH^{(a,b)}_*(W)\to SH^{(a',b')}_*(W),$$
defined by taking the direct limit of the corresponding ``inclusion'' maps in
Floer homology. Furthermore, symplectic homology fits into a long exact
sequence analogous to \eqref{eq: long exa seq for Hamiltonian}:
\begin{equation}\label{eq: long exa seq for symplectic hom}
\cdots \to SH_*^{(a,b)}(W)\to SH_*^{(a,c)}(W)\to SH_*^{(b,c)}(W)\to SH_{*-1}^{(a,b)}(W)\to \cdots
\end{equation}
Symplectic homology admits a decomposition with respect to
$H_1(W;\Z)$, inherited from \eqref{eq: H_1 splitting of HF}, namely
\begin{equation}\label{eq: H_1 splitting in SH}
SH^{(a,b)}_*(W)=\bigoplus_{\eta\in H_1(W;\Z)} SH^{(a,b)}_*(W;\eta),
\end{equation}
and this decomposition is preserved by the long exact sequence
\eqref{eq: long exa seq for symplectic hom}.

\begin{remark}
One can further refine the decomposition of symplectic homology in \eqref{eq: H_1 splitting in SH} by free homotopy classes of loops in $W$, which corresponds to the connected components of the free loop space of $M$ when $\widehat{W}=T^*M$. We denote the symplectic homology associated to a free homotopy class of loops $\mathfrak{f}$ by $SH_*^{(a,b)}(W,\mathfrak{f})$. If $\mathfrak{f}$ projects to $\eta\in H_1(M;\Z)$, then $SH_*^{(a,b)}(W,\mathfrak{f}) \neq 0$ implies $SH_*^{(a,b)}(W;\eta)\neq 0$. This is used in Theorem \ref{thm: main_result}.
\end{remark}

We note that there is a useful relationship between symplectic homology and the Floer homology of a semi-admissible Hamiltonian, which we describe below.

\begin{theorem}[\cite{cineli2025barcode}, Theorem~3.5]\label{thm: Ham to symp}
Let $H$ be a semi-admissible Hamiltonian with $\operatorname{slope}(H)=a$.
Then, for every $\tau<a$, there exists an isomorphism
\[\Phi_H^\tau \colon SH^{\tau}_*(W,\lambda) \xrightarrow{\cong} HF^{a_h(\tau)}_*(H),\]
where $a_h$ is defined in \eqref{eq: a_h function}.

Moreover, these isomorphisms are natural in the sense that they commute with
action ``inclusion'' maps and monotone continuation maps. More precisely, for
any $\tau'\le \tau\le a$ and any two semi-admissible Hamiltonians $H'\le H$,
the following diagrams commute:
\begin{equation}\label{diag: diagram sympl to Hamil}
\begin{tikzcd}
SH^{\tau'}_*(W,\lambda) \arrow{r}{\Phi_{H}^{\tau'}} \arrow{d}
  & HF^{a_h(\tau')}_*(H) \arrow{d} \\
SH^{\tau}_*(W,\lambda) \arrow{r}{\Phi_H^{\tau}}
  & HF^{a_h(\tau)}_*(H),
\end{tikzcd}
\end{equation}
where the vertical arrows are the inclusion maps, and
\begin{center}
\begin{tikzcd}
SH^{\tau}_*(W,\lambda) \arrow{r}{\Phi_{H'}^{\tau}} \arrow{d}{id}
  & HF^{a_h(\tau)}_*(H') \arrow{d} \\
SH^{\tau}_*(W,\lambda) \arrow{r}{\Phi_H^{\tau}}
  & HF^{a_h(\tau)}_*(H),
\end{tikzcd}
\end{center}
where the right vertical arrow is the continuation map from $H'$ to $H$.
\end{theorem}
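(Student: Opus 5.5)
The plan is to prove the isomorphism first for $\tau\notin\mathcal{S}(\alpha)$, using continuation maps along a monotone family of semi-admissible Hamiltonians. The key point is that such a family never moves the action level $a_h(\tau)$ across the action spectrum. Symplectic homology is then recovered as a direct limit along this family.

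Take $H$ semi-admissible with slope $a$ and $\tau<a$, $\tau\notin\mathcal{S}(\alpha)$. The $1$-periodic orbits of $H$ come in two kinds:
\begin{itemize}
\item the constant orbits in $W$, which have action $0$;
\item the orbits $\tilde\gamma$ in the shell $1<r<r_{\max}$, which correspond to Reeb orbits $\gamma$ of period $T<a$ and have action $a_h(T)$.
\end{itemize}
By \eqref{eq: bilipchitz}, $a_h$ is strictly increasing. So the level $a_h(\tau)$ separates exactly the Hamiltonian orbits coming from Reeb orbits of period $<\tau$ (together with the constants) from those of period $>\tau$. In particular $a_h(\tau)\notin\mathcal{S}(H)$.

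Now consider the family $sH$ for $s\ge 1$. Since $h\ge 0$ and $h$ is nondecreasing, $sH\ge H$, so the homotopy from $H$ to $sH$ is monotone and the continuation maps are defined. The function $s\mapsto a_{sh}(\tau)$ is continuous, and by the argument above $a_{sh}(\tau)\notin\mathcal{S}(sH)$ for every $s$. I will then invoke the standard invariance lemma for filtered Floer homology. It says that if a monotone homotopy $H_s$ and a continuous level $c(s)$ satisfy $c(s)\notin\mathcal{S}(H_s)$ for all $s$, then the continuation map $HF^{c(0)}(H_0)\to HF^{c(1)}(H_1)$ is an isomorphism. Its proof subdivides the homotopy into short pieces. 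On each piece the energy estimate $\int\partial_sH_s\le$ (small) and the uniform gap between $c(s)$ and the spectrum show that the continuation map and the reverse map, shifted by a small amount, compose to inclusions that are isomorphisms. This is the step I expect to be the main obstacle. Two points need care here.
\begin{itemize}
\item The homotopy is only monotone in one direction, so the inverse map is not itself a continuation map. It must be produced by an action-shift argument that uses the spectral gap, on both sides of $c(s)$ for the degenerate $H$, via small non-degenerate perturbations.
\item Because $H$ and $sH$ are linear at infinity, the no-escape lemma must also hold along the homotopy, so that compactness is not lost.
\end{itemize}

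Granting the invariance lemma, all the maps $HF^{a_h(\tau)}(H)\to HF^{a_{sh}(\tau)}(sH)$ are isomorphisms. By Remark \ref{rmk: cofinal sequence/families}, $SH^{\tau}(W)$ is the direct limit of $HF^{\tau'}(sH)$. Property \eqref{eq: A_h(t)convergetot} gives $a_{sh}(\tau)\to\tau$. Together with the spectral gap near $\tau$, this shows that for large $s$ the inclusions $HF^{\tau}(sH)\to HF^{a_{sh}(\tau)}(sH)$ are isomorphisms, since no spectrum lies in between. Hence the direct system consists of isomorphisms. I then define $\Phi^\tau_H$ as the inverse of the canonical map $HF^{a_h(\tau)}(H)\to SH^\tau(W)$, which is an isomorphism. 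A second cofinal family of semi-admissible Hamiltonians can be compared with the first through monotone continuation maps, so the definition does not depend on the chosen family. The passage between semi-admissible and admissible Hamiltonians is only the shift by $\epsilon_i$ described in Remark \ref{rmk: cofinal sequence/families}.

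For $\tau\in\mathcal{S}(\alpha)$, the filtered groups are defined as direct limits over $\tau'\downarrow\tau$. Since $a_h$ is continuous and increasing, $a_h(\tau')\downarrow a_h(\tau)$, and I define $\Phi^\tau_H$ as the direct limit of the maps $\Phi^{\tau'}_H$. The slope-in-spectrum convention is handled in the same way, by the limit over $\tilde H\le H$.

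Naturality is then formal. Every map involved is a composition of continuation maps and inclusion maps, and these commute with each other by functoriality of monotone continuation maps. For $H'\le H$ with $\tau'\le\tau$, both squares in \eqref{diag: diagram sympl to Hamil} therefore reduce to commuting diagrams of such maps into a common cofinal $sH$. This uses $a_{h'}\ge a_h$ from property (ii), which guarantees that the continuation map respects the relevant action levels.
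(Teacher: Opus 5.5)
Your overall route is the standard one, and the bookkeeping around it is fine: transporting the level $a_{sh}(\tau)$ along the monotone family $sH$, using that it never meets $\mathcal{S}(sH)$, and then letting $a_{sh}(\tau)\to\tau$ in the cofinal family. By bookkeeping I mean the uniform gap, the absence of spectrum of $sH$ between $\tau$ and $a_{sh}(\tau)$ for large $s$, and the passage to the limit. The paper itself gives no proof; it imports the statement from \cite{cineli2025barcode}.

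\textbf{First gap.} It lies in the step you single out as the main obstacle: the inverse of a short continuation step cannot be obtained by an action-shift argument as you describe.
In the compactly supported setting the inverse comes from the sandwich $\sigma H\le\sigma'H\le\sigma H+\epsilon$. Here that sandwich fails at infinity, because $\sigma'H-\sigma H=(\sigma'-\sigma)H$ grows linearly. Slope-decreasing homotopies are not available, since the no-escape lemma fails for them. The error term in your energy estimate, $\int\!\!\int\partial_\sigma(\sigma H)(u)=\int\!\!\int H(u)$, is small only if $u$ is confined to a compact set. Non-degenerate perturbations change none of this.

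The missing idea is localization. All orbits of $\sigma H$ of period $<\tau+\delta$ lie in $\{r<R\}$ with $h'(R)=\tau+\delta$, uniformly in $\sigma\ge 1$. For homotopies whose increment is radial with nondecreasing profile, the maximum principle keeps the relevant Floer and continuation cylinders in $\{r\le R\}$. You can then sandwich $\sigma H\le \sigma'H\le K\le K'$ with $K=\sigma H+\epsilon$ and $K'=\sigma'H+\epsilon$ on $\{r\le R\}$. Choose them convex and radial outside, with $K-\sigma H$, $K'-\sigma'H$ and $K'-K$ radial with nondecreasing profile.
Below the gap you then have $HF^{c}(K)\cong HF^{c+\epsilon}(\sigma H)$ and $HF^{c}(K')\cong HF^{c+\epsilon}(\sigma'H)$. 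The maps $\sigma H\to K$ and $\sigma'H\to K'$ are pure constant shifts, and $K\to K'$ is the continuation $\sigma H\to\sigma'H$ at level $c+\epsilon$. So the two composites are inclusions, which are isomorphisms by the gap.

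\textbf{Second gap.} In the naturality step, property (ii) works against you, not for you. Monotone continuation maps only guarantee that action does not increase, so they send $CF^{L}(H')$ to $CF^{L}(H)$. Since $a_h(\tau)\le a_{h'}(\tau)$, nothing in the filtration excludes a continuation cylinder from an orbit of $H'$ of period $<\tau$ to an orbit of $H$ of period $>\tau$ whose action lies in $[a_h(\tau),a_{h'}(\tau))$.
Hence the right vertical arrow $HF^{a_{h'}(\tau)}(H')\to HF^{a_h(\tau)}(H)$ is not induced by the chain-level continuation map plus the filtration. It has to be the moving-level map along the monotone path $H_\sigma=(1-\sigma)H'+\sigma H$. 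This path stays semi-admissible, so $a_{h_\sigma}(\tau)\notin\mathcal{S}(H_\sigma)$, and the relevant orbits stay in a fixed compact region as above.
Commutativity of both squares then follows from the compatibility of these moving-level maps with concatenation, with inclusions, and with continuation into a common $sH$. With these two repairs the argument goes through.
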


It follows from the first diagram \eqref{diag: diagram sympl to Hamil}, the
exact sequences \eqref{eq: long exa seq for symplectic hom} and
\eqref{eq: long exa seq for Hamiltonian}, together with the five lemma, that
for any $(a,b)$ with $0<a<b<\operatorname{slope}(H)$ there is an isomorphism
\begin{equation}\label{eq: iso filt SH to HF}
\Phi_H^{(a,b)} \colon SH^{(a,b)}_*(W,\lambda) \xrightarrow{\cong}
HF^{(a_h(a),a_h(b))}_*(H).
\end{equation}

Hamiltonian Floer homology and symplectic homology both carry a
\textit{pair-of-pants product}. In the Hamiltonian case, it takes the form
$$HF^{a}_{m_0}(H_0)\otimes HF^{b}_{m_1}(H_1)\to HF^{a+b}_{m_0+m_1-n}(H_2),$$
provided that $\operatorname{slope}(H_i)\notin \mathcal{S}(\alpha)$ for
$i=0,1,2$ and that
$\operatorname{slope}(H_0)+\operatorname{slope}(H_1)\le \operatorname{slope}(H_2)$.

Such a product is defined at the chain level, for non-degenerate Hamiltonians
linear at infinity, by counting solutions to a version of Floer's equation
\eqref{eq: floer eq} on a three-punctured Riemann sphere equipped with
cylindrical ends. It is then extended to (semi-)admissible Hamiltonians via
continuation maps. The product is natural with respect to continuation maps and
induces the \textit{pair-of-pants product on symplectic homology} by taking a
direct limit of the following diagrams:
\begin{equation}\label{eq: pair of pants}
\begin{tikzcd}[column sep=5em, row sep=3.5em]
HF_{m_0}^a(H_0)\otimes HF_{m_1}^b(H_1) \arrow[r] \arrow[d, "i_{H_0}\otimes i_{H_1}"'] &
HF^{a+b}_{m_0+m_1-n}(H_2) \arrow[d, "i_{H_2}"] \\
SH_{m_0}^{a}(W)\otimes SH_{m_1}^{b}(W) \arrow[r] &
SH_{m_0+m_1-n}^{a+b}(W),
\end{tikzcd}
\end{equation}
where the vertical maps are the inclusion maps from \eqref{eq: inclusion maps}.
See \cite{abouzaid2013symplectic} for details of the construction.

The \textit{pair-of-pants product in symplectic homology} takes the form
\[SH_{m_1}(W)\otimes SH_{m_2}(W)\to SH_{m_1+m_2-n}(W).\]

\subsection{Transfer map}
In this subsection, we recall the main ideas behind the \textit{transfer map},
first defined by Viterbo in \cite{viterbo1999functors} and subsequently studied
by many others. Our setup is closest to \cite{cieliebak2018symplectic}.

Let $(W_\pm,\lambda_\pm)$ be Liouville domains of the same dimension, and let
$i\colon W_-\to W_+$ be an exact symplectic embedding with
$i(W_-)\subset \operatorname{int}(W_+)$. We will identify $W_-$ with its image
and hence treat $i$ as an inclusion $W_-\subset W_+$. The \textit{transfer map}
is a map between filtered symplectic homologies
\begin{equation}\label{eq: tranfer map}
f_{W_-W_+}^{(a,b)} \colon SH_*^{(a,b)}(W_+) \to SH_*^{(a,b)}(W_-),
\end{equation}
satisfying a functoriality property. More precisely, for a triple exact inclusion of
Liouville domains $W_-\subset W\subset W_+$, one has
\begin{equation}\label{eq: functoriality tranfer map}
f_{W_-W_+}^{(a,b)} = f_{W_-W}^{(a,b)} \circ f_{WW_+}^{(a,b)}.
\end{equation}

Let us recall the definition of $f_{W_-W_+}^{(a,b)}$. Denote by
$\mathcal{H}_{<0}(W_+;W_-)$ the set of Hamiltonians
$H\colon \widehat{W}_+\times S^1 \to \R$ that are linear at infinity and satisfy
$H|_{W_-\times S^1}<0$. Although such Hamiltonians are not linear
at infinity for $\widehat{W}_-$, they nevertheless form a suitable class for
computing $SH_*(W_-)$.

\begin{lemma}\cite[Lemma~5.1]{cieliebak2018symplectic}\label{lemma: transfer map lemma}
For any $-\infty<a<b<\infty$, one has
$$SH^{(a,b)}_*(W_-)=\varinjlim_{H\in \mathcal{H}_{<0}(W_+;W_-)} HF_*^{(a,b)}(H).$$
\end{lemma}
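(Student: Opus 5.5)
The plan is to exhibit a cofinal subfamily of $\mathcal{H}_{<0}(W_+;W_-)$ made of ``two-step'' Hamiltonians. For each of them, the filtered Floer homology in the window $(a,b)$ should be canonically isomorphic to the filtered Floer homology of an admissible Hamiltonian on $\widehat{W}_-$. The colimit over the family then reproduces the colimit defining $SH^{(a,b)}_*(W_-)$.

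\textbf{Setup of the two-step family.} Since $W_-\subset \operatorname{int}(W_+)$ exactly, the Liouville flow of $\lambda_+$ identifies a collar $\partial W_-\times[1,R]$ of $\partial W_-$ inside $W_+\setminus W_-$ with a piece of the symplectization of $(\partial W_-,\alpha_-)$, with $\lambda_+=r\alpha_-$ there. I will take this up to exactness: a compactly supported correction of the primitive changes actions only by an amount that can be absorbed, and I expect this to be routine. Fix $s\notin\mathcal{S}(\alpha_-)$, a large constant $C$, and $\sigma\notin\mathcal{S}(\alpha_+)$. Let $K_{s,C,\sigma}$ be the Hamiltonian with the following shape:
\begin{itemize}
\item it is a small negative constant on $W_-$;
\item on the collar it is a convex function $k(r)$ of slope rising from $0$ to $s$, then bending back to a constant $C$ before $r=R$;
\item it equals $C$ on the rest of $W_+$;
\item on the cylindrical end of $\widehat W_+$ it is convex, with eventual slope $\sigma$.
\end{itemize}
Its $1$-periodic orbits fall into four groups:
\begin{enumerate}
\item constants in $W_-$, with action near $0$;
\item Reeb orbits of $\partial W_-$ of period $<s$, with actions $a_k(T)$;
\item orbits at the concave bend and constants on the plateau, with actions $\le -C+O(sR)$;
\item orbits on the end of $W_+$, with actions $\le -C+r_{\max}\sigma$.
\end{enumerate}
Given $s$ and $\sigma$, I choose $C\gg s R + r_{\max}\sigma - a$. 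Then only the first two groups have action in $(a,b)$, and these are exactly the generators of $CF^{(a,b)}(H_s)$ for the corresponding admissible Hamiltonian $H_s$ on $\widehat W_-$ of slope $s$.

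\textbf{Key steps.}
\begin{enumerate}
\item \emph{Chain-level identification.} Choose $J$ cylindrical on the collar region where $k$ is increasing. By the maximum principle for Floer cylinders in the region where $H=h(r)$ with $h'\ge 0$ (the no-escape lemma), Floer cylinders between generators in $W_-\cup(\partial W_-\times[1,r_s])$ cannot leave that region. Hence $CF^{(a,b)}(K_{s,C,\sigma})\cong CF^{(a,b)}(H_s)$ as complexes, which gives $HF^{(a,b)}(K_{s,C,\sigma})\cong HF^{(a,b)}(H_s)$.
\item \emph{Cofinality.} Given any $H\in \mathcal{H}_{<0}(W_+;W_-)$, pick $\sigma>\operatorname{slope}(H)$, $s$ large, and then $C$ large, so that $H\le K_{s,C,\sigma}$. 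This is possible because $H$ is bounded on compact sets and negative on $W_-$; shrink the negative constant on $W_-$ toward $\max_{W_-}H$ as needed. Thus the family is cofinal and computes $\varinjlim_{\mathcal{H}_{<0}}HF^{(a,b)}$.
\item \emph{Compatibility.} For $(s,C,\sigma)\le(s',C',\sigma')$, the monotone continuation map $K\to K'$ must correspond under step 1 to the continuation map $H_s\to H_{s'}$. I would verify this by choosing monotone homotopies that are of the form $h_\tau(r)$ on the collar. The same maximum principle, now for $s$-dependent radial Hamiltonians with $\partial_\tau h_\tau\le0$ in the reversed convention, confines the continuation cylinders with ends in the window to the $W_-$ side. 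Passing to colimits then gives the lemma.
\end{enumerate}

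\textbf{Main obstacle.} I expect step 3 to be the main difficulty, specifically confinement during continuation. During a homotopy the plateau value $C$ changes, and orbits of the third group could a priori cross into the window $(a,b)$. I would handle this by first raising $C$ while keeping $s$ and $\sigma$ fixed: this leaves the third-group orbits far below $a$, and the continuation map is an isomorphism on the window by an action-window argument. Only after that would I increase $s$, and finally $\sigma$, keeping $C$ large enough at every stage that the action estimates from the setup hold uniformly along the homotopy. In this way the comparison maps are realized as compositions of identifications already controlled by step 1.
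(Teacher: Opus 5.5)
The paper gives no proof of this lemma; it quotes Cieliebak--Oancea. Your plan (a cofinal family of two-step Hamiltonians plus the no-escape lemma) is the natural one, but Step~1 depends on an action separation that cannot be achieved.

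First, the parameters are inconsistent. On the collar $k'\le s$ and $k(1)<0$, so the plateau value satisfies $C<s(R-1)$, and $C\gg sR$ is impossible.

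More seriously, no choice of parameters helps. An orbit at radius $r$ on the concave bend has action $A(r)=rk'(r)-k(r)$, and $A'(r)=rk''(r)\le 0$ there. At the point $r_m$ of maximal slope, $A(r_m)>r_ms-s(r_m-1)=s>b$ (for $s>b$). At the end of the bend, $A=-C<a$. So $A$ crosses $(a,b)$ on a subinterval $[r_1',r_2']$ with $A(r_1')=b$ and $A(r_2')=a$. Since
$A(r_1')-A(r_2')=r_1'k'(r_1')-r_2'k'(r_2')+\int_{r_1'}^{r_2'}k'\,dr\le r_2'\bigl(k'(r_1')-k'(r_2')\bigr)$,
every Reeb orbit of $\partial W_-$ with period in some interval of length at least $(b-a)/R$ gives a generator of $CF^{(a,b)}(K_{s,C,\sigma})$ with no counterpart for $H_s$. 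Steepening beyond $s$ before bending only moves the problem to the larger slope.

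In general, spectra have no gaps of fixed size at large periods, and these generators carry homology. For example, take the flat metric, $W_-=D^*T^2$, $W_+$ the codisk bundle of radius $R$, and $k$ linear of slope $s$ up to a sharp bend. For $s$ large, some lattice vector $v$ with $|v|>b$ has its bend copy in the window. In the free homotopy class $v$ this Morse--Bott $T^2$-family is the only contribution. Hence $HF^{(a,b)}(K_{s,C,\sigma};v)$ is a shifted copy of $H_*(T^2)\neq 0$, whereas $HF^{(a,b)}(H_s;v)=0$. So the chain isomorphism of Step~1 fails, even in a case where $\widehat W_-=\widehat W_+$ and the lemma itself is trivial.

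What is missing is a way to handle these orbits rather than exclude them. A one-sided version of the no-escape computation does half the job. Suppose $J$ is cylindrical near a level $r_0$ close to $r_m$ (with $k'(r_0)\notin\mathcal{S}(\alpha_-)$), and $u$ is a Floer cylinder with positive end in $\{r>r_0\}$ and negative end in $\{r<r_0\}$. Then the energy of $u$ on $u^{-1}(\{r\ge r_0\})$ is at most $\mathcal{A}(x_+)-\bigl(r_0k'(r_0)-k(r_0)\bigr)$. Since this level action exceeds $b$, three things follow:
\begin{itemize}
\item the outside generators in the window span a subcomplex $O$;
\item the quotient is $CF^{(a,b)}(H_s)$, by the usual no-escape lemma;
\item this is compatible with continuation along radial monotone homotopies whose level action is monotone (larger for the larger Hamiltonian).
\end{itemize}
That gives a map from your colimit to $SH^{(a,b)}(W_-)$. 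For an isomorphism you must still show $\varinjlim H_*(O)=0$, and this is not automatic. The bend copy of a fixed Reeb orbit is eventually pushed below $a$, but bend copies of longer orbits keep entering the window from above. So you need a genuine argument about the continuation maps on $H_*(O)$.

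Your ``main obstacle'' paragraph addresses continuation bookkeeping, which is comparatively harmless. The real obstruction is already present for a single Hamiltonian.
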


We remark that our setup is simpler than the one in \cite{cieliebak2018symplectic},
since we only consider the special class of Liouville cobordisms that are actual
Liouville domains.

Define $\mathcal{H}_{<0}(W_\pm)$ to be the set of non-degenerate Hamiltonians
$H\colon \widehat{W}_\pm\times S^1\to \R$ that are linear at infinity and satisfy
$H|_{W_\pm\times S^1}<0$. Note that $\mathcal{H}_{<0}(W_+)\subset
\mathcal{H}_{<0}(W_+;W_-)$, and for $K\in \mathcal{H}_{<0}(W_+)$ and
$H\in \mathcal{H}_{<0}(W_+;W_-)$,$K\leq H$, there is a continuation map
\begin{equation}\label{eq: continutation tranfer}
f_{HK}^{(a,b)}\colon HF^{(a,b)}_*(K)\to HF^{(a,b)}_*(H).
\end{equation}
These maps define a morphism of directed systems. In light of
Lemma \ref{lemma: transfer map lemma}, taking the direct limit yields the
transfer map \eqref{eq: tranfer map}:
\begin{equation}\label{eq: transfer map as direct limit}
f_{W_-W_+}^{(a,b)} :=
\varinjlim_{\substack{K\le H\\ K\in\mathcal{H}_{<0}(W_+)\\ H\in\mathcal{H}_{<0}(W_+;W_-)}}
f_{HK}^{(a,b)}.
\end{equation}
Functoriality \eqref{eq: functoriality tranfer map} then follows from the
corresponding property for continuation maps; see
\cite[Proposition~5.4]{cieliebak2018symplectic} for a proof.

\subsection{Cotangent bundles and Viterbo's theorem}\label{subsec: SH of a cotangent bdle}
The goal of this subsection is to present a version of Viterbo's isomorphism
for fiberwise star-shaped Liouville subdomains in the cotangent bundle of a
closed smooth manifold $M$. Throughout this subsection, all homology groups are
taken with coefficients in $\Z_2$ unless explicitly stated otherwise.

Consider the free loop space $\Lambda M$. Recall that its homology, together
with the Chas-Sullivan product, forms an associative and commutative algebra;
see \cite{chas1999string,latschev2015free}.

The cotangent bundle $T^*M$ is an exact symplectic manifold with symplectic
form $\omega=d\lambda_{std}$, where the Liouville form $\lambda_{std}$ is
expressed in local coordinates $\{(p_i,q_i)\}$ as
$$\lambda_{std}=\sum_i p_i\,dq_i.$$
The corresponding Liouville vector field, i.e., the vector field $Z$
satisfying $i_{Z}\omega=\lambda_{std}$, is expressed in local coordinates as
\begin{equation}\label{eq: Livouvile vf}
Z=\sum_i p_i\,\partial_{p_i},
\end{equation}
which is the radial vector field.

For a Riemannian metric $g$ on $M$, the \textit{unit disk cotangent bundle}
$D_g^*M$, consisting of covectors of length at most one, is a Liouville
subdomain of $(T^*M,\lambda_{std})$. Viterbo's theorem provides an isomorphism
of filtered algebras
\begin{equation}\label{eq: Viterbo isomo}
SH_*(D_g^*M)\xrightarrow{\cong} H_*(\Lambda M),
\end{equation}
intertwining the pair-of-pants and the Chas-Sullivan products, respectively;
see \cite{abbondandolo2010floer, abouzaid2013symplectic, viterbo1999functors}.

For the remainder of this subsection, we explain how the algebra isomorphism
\eqref{eq: Viterbo isomo} can be generalized to fiberwise star-shaped
subdomains of $T^*M$ (though it is not action preserving). Although this
is well known to experts, we include the argument for completeness; see \cite[Section 7]{seidel2008biased}.

The completions of two Liouville domains $(W_1,\lambda_1)$ and $(W_2,\lambda_2)$
are said to be \textit{exact symplectomorphic} if there exists a diffeomorphism
$\phi\colon \widehat{W}_1\to \widehat{W}_2$ such that $\phi^*\lambda_2-\lambda_1$
is a compactly supported exact $1$-form. For any two Liouville domains whose
completions are exact symplectomorphic, there is a natural isomorphism of
algebras
\begin{equation}\label{eq: iso in algebra}
SH_*(W_1,\lambda_1)\cong SH_*(W_2,\lambda_2).
\end{equation}

This isomorphism can be described as follows. Let $H$ be any Hamiltonian on
$\widehat{W}_1$ that is linear at infinity, and let $J$ be any regular almost
complex structure for $H$. Pulling back by $\phi$ yields an induced Hamiltonian
$H'$ and an almost complex structure $J'$ on $\widehat{W}_2$, which are again
linear at infinity (with the same slope) and admissible, respectively. The
corresponding chain complexes are then naturally isomorphic, and hence
$HF(H,J)\cong HF(H',J')$. Moreover, this isomorphism preserves the pair-of-pants
product \eqref{eq: pair of pants}. Taking a direct limit yields the induced
isomorphism \eqref{eq: iso in algebra}; see \cite{mclean2008symplectic} for
more details.

We observe that the completion of any fiberwise star-shaped subdomain
$W\subset T^*M$ is exact symplectomorphic to $T^*M$. Indeed, let
$\Sigma=\partial W$. Since the Liouville vector field \eqref{eq: Livouvile vf}
is transverse to $\Sigma$, the restriction $\alpha=\lambda_{std}|_{\Sigma}$ is
a contact form on $\Sigma$, and hence $(W,\lambda_{std}|_{W})$ is a Liouville
domain. The flow of $Z$ induces a diffeomorphism
$T^*M\setminus \operatorname{int}(W)\to \Sigma\times [1,\infty)$ under which
$\lambda_{std}$ corresponds to $r\alpha$, where $r\in[1,\infty)$. This yields an
exact symplectomorphism between $\widehat{W}$ and $T^*M$.

Therefore, by choosing a Riemannian metric $g$ on $M$, the unit disk cotangent bundle
$D_g^*M$ is a fiberwise star-shaped subdomain. Combining the algebra
isomorphism $SH_*(W)\cong SH_*(D_g^*M)$ with Viterbo's isomorphism
\eqref{eq: Viterbo isomo}, we obtain an algebra isomorphism
\begin{equation}\label{eq: iso vit for star-shaped}
SH_*(W)\xrightarrow{\cong} H_*(\Lambda M).
\end{equation}
We remark that the isomorphism \eqref{eq: iso vit for star-shaped} does not
preserve action filtration, although it does preserve the
$H_1(M;\Z)$-splitting \eqref{eq: H_1 splitting in SH}.

\section{Local Floer homology}
Most of what we discuss in this section can be formulated in greater
generality, but we choose to work with a Liouville domain
$(W,\omega=d\lambda)$ satisfying $c_1(W)=0$ for convenience. We refer to
\cite{ginzburg2010conley, ginzburg2010local, fender2020local, mclean2012local}
and the references therein for more details. 

\subsection{Local Hamiltonian Floer homology}
Let $x$ be an isolated 1-periodic orbit of a Hamiltonian
$H\colon \widehat{W}^{2n}\times S^1\to \R$. Pick a sufficiently small
neighborhood $U$ of $x$. More precisely, we take $U$ to be a neighborhood of
the loop $x(S^1)$, viewing $x$ as its graph embedding in the extended space
$\widehat{W}^{2n}\times S^1$. Now let $\tilde{H}$ be a Hamiltonian that is $C^2$-close to $H$, agrees with $H$
outside of $U$, and such that all $1$-periodic orbits of $\tilde{H}$ contained
in $U$ are non-degenerate; see \cite[Theorem~9.1]{salamon1992morse}. We define the \textit{local Floer homology}
of $H$ at $x$, denoted by $HF_*(H,x)$ (and, when necessary, by $HF_*(H,x,\omega)$
to indicate the dependence on the underlying symplectic form), as the homology
of the complex generated by the $1$-periodic orbits of $\tilde{H}$ in $U$, with
differential given by counting solutions of Floer's equation for $\tilde{H}$
connecting these orbits. It is standard that this construction is well defined
(the relevant Floer cylinders and broken trajectories for $\tilde{H}$ remain in
$U$) and that the resulting group is independent of the choices (independent
of the perturbation $\tilde{H}$ and the almost complex structure). See
\cite{floer1995transversality, salamon1992morse} for more details. In fact, we can replace an isolated orbit by an isolated family of one-periodic orbits $\mathcal{F}$ of $H$, so that its local Floer homology is still defined in the same way and denoted by $HF_*(H,\mathcal{F})$; see \cite[Section 2.3]{mclean2012local}, \cite{fender2020local} and references therein. Note that, from the definition, the Hamiltonian $H$ need not be defined on all of
$\widehat{W}$. Since the construction is local, it suffices to consider
Hamiltonians defined on a neighborhood of $x$.

Recall that for two Hamiltonians $G$ and $H$, their composition is defined by
\begin{equation}\label{eq:composition Hamiltonians}
(H\#G)_t = H_t + G_t\circ (\varphi_H^t)^{-1},
\end{equation}
and its Hamiltonian flow is $\varphi_H^t\circ \varphi_G^t$. Suppose that
$\varphi_G^t$ is a local loop of Hamiltonian diffeomorphisms, defined on a
neighborhood $U$ of $x$ and generated by a Hamiltonian $G$. Then one has
$$  HF_*(G\# H,\varphi_G(x)) = HF_{*-2\mu}(H,x),
$$
where $\mu$ is the Maslov index of the loop $t\mapsto \varphi_G^t(x(0))$ and $\varphi_G(x)$ is the orbit $\varphi_G^t(x(t))$.

This property allows us to define the local Floer homology of a local Hamiltonian
diffeomorphism $\varphi$ with an isolated fixed point $p$, up to a shift in
degree, as follows. Choose a local Hamiltonian $H$ with $\varphi_H^1=\varphi$
and set $x(t)=\varphi_H^t(p)$. We then define $HF_*(\varphi,p):=HF_*(H,x)$.
The grading depends on the class of the path $\varphi_H^t$ in the universal
cover of the group of germs of Hamiltonian diffeomorphisms.

The next proposition shows how local Floer homology groups are building blocks for the ordinary Floer homology.
\begin{proposition} \label{prop: ham floer to local}
If $c\in \mathcal{S}(H)$ is isolated, then there exists $\epsilon>0$ 
such that
\begin{equation} \label{eq: splitning in SH}
    HF^{I}_*(H) = \bigoplus_{\mathcal{A}_H(\gamma) = c} HF_*(H,x),
\end{equation}
for any interval $I$ containing $c$ with length $|I|<\epsilon$.
\end{proposition}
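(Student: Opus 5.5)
The plan is to reduce to a nondegenerate perturbation and then split the filtered Floer complex by action. Since $c$ is an isolated point of $\mathcal{S}(H)$, choose $\epsilon>0$ so that $(c-2\epsilon,c+2\epsilon)\cap\mathcal{S}(H)=\{c\}$. The set of $1$-periodic orbits with action exactly $c$ is compact, because periodic orbits of a linear-at-infinity Hamiltonian with slope not in the spectrum lie in a compact set. I will assume, as is implicit in the statement, that these orbits are isolated, or more generally form finitely many isolated families $\mathcal{F}$; then there are finitely many of them, $x_1,\dots,x_N$. Choose pairwise disjoint small neighborhoods $U_j$ of the $x_j$ in the extended space $\widehat W\times S^1$.

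Next, perturb $H$ to a nondegenerate $\tilde H$ supported in $\bigcup U_j$ and $C^2$-close to $H$, as in the definition of local Floer homology. For $\tilde H$ sufficiently close, every $1$-periodic orbit of $\tilde H$ either coincides with an orbit of $H$ outside $\bigcup U_j$ (action outside $(c-2\epsilon,c+2\epsilon)$), or lies in some $U_j$. Orbits in $U_j$ have action within $\delta\ll\epsilon$ of $c$, by continuity of $\mathcal{A}$ under $C^2$-small perturbation. Hence, for any interval $I\ni c$ with $|I|<\epsilon$ and endpoints chosen so that $I$ contains these perturbed actions, the complex $CF^{I}(\tilde H)$ is generated exactly by the orbits of $\tilde H$ in $\bigcup_j U_j$. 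The endpoints of $I$ can be assumed outside $\mathcal{S}$, with general $I$ handled by the direct-limit definition. Moreover, $HF^I(H)$ is by definition $HF^I(\tilde H)$.

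The main step is to show that the differential on $CF^I(\tilde H)$ is block diagonal and that each block equals the local complex. A Floer cylinder $u$ between orbits in $U_i$ and $U_j$ has energy equal to the action difference, which is at most $2\delta$. For this I would use the standard small-energy localization lemma, as in Salamon's notes, Ginzburg's Conley paper, and the construction of local Floer homology. As $\tilde H\to H$ and $\delta\to0$, cylinders with energy tending to zero must converge uniformly, via Gromov compactness and exactness, which rules out bubbling, to a $1$-periodic orbit of $H$. In particular such a cylinder stays inside a single $U_j$ once the perturbation is small enough, since the $U_j$ are disjoint neighborhoods of the only orbits with action near $c$. Consequently there are no cylinders between different $U_i$ and $U_j$. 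Cylinders within $U_j$ are exactly those defining $HF_*(H,x_j)$, and broken trajectories also stay inside, so the complex splits as a direct sum and its homology gives \eqref{eq: splitning in SH}.

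I expect the localization step to be the main obstacle: proving uniform $C^0$ convergence of low-energy Floer cylinders to periodic orbits, and making the choice of $\epsilon$ independent of the perturbation. I would handle it by a contradiction argument: take a sequence of perturbations $\tilde H_k\to H$ and cylinders $u_k$ with energy tending to $0$ that exit $\bigcup U_j$. Using the no-escape lemma to keep the $u_k$ in a compact set, together with elliptic bootstrapping, the $u_k$ converge (after translation) to a zero-energy solution. Such a limit is $t$-independent, hence a periodic orbit of $H$ whose action lies in $I$, so it is one of the $x_j$. This contradicts the assumption that the $u_k$ leave $\bigcup U_j$.
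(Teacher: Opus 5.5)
Your proposal is correct and follows the route the paper intends: the paper omits the proof and refers to \cite{ginzburg2010conley}. There the splitting comes from perturbing $H$ near the orbits of action $c$ and using compactness of small-energy Floer cylinders (no-escape, no bubbling by exactness) to show that the differential on $CF^I(\tilde H)$ is block diagonal, with the local complexes as blocks. One small slip: a zero-energy limit satisfies $\partial_s u=0$, so it is $s$-independent rather than $t$-independent, and that is what makes it a $1$-periodic orbit of $H$.
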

The proof follows from standard arguments in local Floer homology and is omitted. See \cite{ginzburg2010conley} for details.

The \textit{support} of $HF_*(H,x)$ is, by definition, the set of integers $k$
such that $HF_k(H,x)\neq 0$, and we denote it by $\text{supp}(HF_*(H,x))$.
It is a consequence of \eqref{eq: mean index and conley zehnder index} that for an isolated orbit $x$,
\begin{equation}\label{eq: supp of local floer}
\text{supp}(HF(H,x))\subset [\Delta_H(x)-n,\Delta_H(x)+n].
\end{equation}

We now recall the definition of a \textit{symplectically degenerate maximum},
or simply an SDM. Let $\varphi$ be a Hamiltonian diffeomorphism and $p$ a fixed point. Consider $\varphi_H = \varphi$ a lift of $\varphi$ to the universal cover of the group of germs of Hamiltonian diffeomorphisms, and $x(t) = \varphi_H^t(p)$ the corresponding one-periodic orbit.

\begin{definition} \label{def: sdm ham}
An isolated fixed point $p$ of a local Hamiltonian diffeomorphism $\varphi$ is called a \textit{symplectically degenerate maximum} if $HF_{\Delta_H(x)+n}(H,x) \neq 0$ for some lift $\varphi_H = \varphi$.
\end{definition}

Note that the definition of an SDM does not depend on the trivialization along $x$. Moreover, for any lift $\varphi_H = \varphi$, the corresponding orbit $x$ is totally degenerate, i.e., all eigenvalues of the linearized return map are equal to one, and hence $\Delta_H(x) \in 2\Z$. In particular, one can choose a lift so that the corresponding orbit has mean index $0$;
See \cite{ginzburg2010conley,ginzburg2010local,ginzburg2009action} for properties and equivalent definitions of SDMs.
The geometric characterization of SDMs is what allows us to carry out local computations.
\begin{proposition}[\cite{ginzburg2010conley}, Proposition 4.5] \label{prop: geome characterization of SDM}
A fixed point $p = x(0)$ of $\varphi_H$  with $\Delta_H(x)=0$ is an SDM if and only if there exists a sequence of loops of Hamiltonian diffeomorphism loops $\eta_i$ with
$\eta_i^t(p) = x(t)$ such that the Hamiltonians $K_i$ defined near $p$ by
$$\varphi_H = \eta^t_i \circ \varphi_{K_i},$$
satisfy the following:
\begin{enumerate}
    \item[(K1)] The point $p$ is a strict local maximum of $K^i_t$ for all $t \in S^1$ and all $i$;
    \item[(K2)] There exists a symplectic basis $\Xi^i$ of $T_pM$ such that 
    $$||d^2(K_t^i)||_{\Xi^i} \rightarrow 0$$ uniformly in $t$; see Remark \ref{rmk: symplectic basis};
    \item[(K3)] The linearization of the loop $\eta^{-1}_i \circ \eta_j$ at $p$ is the identity map for all $i$ and $j$, i.e., $d((\eta_i^t)^{-1}\circ \eta_j^t)_p = Id$ for all $t \in S^1$ and, moreover, the loop $(\eta^t_i)^{-1} \circ \eta_j^t$ is contractible to $id$ in the class of loops fixing $p$ and having identity linearization at $p$. 
    \end{enumerate}
\end{proposition}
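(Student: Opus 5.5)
The statement immediately above is Proposition~\ref{prop: geome characterization of SDM}, quoted from \cite{ginzburg2010conley}. The natural plan is to prove the two implications separately, working locally in Darboux coordinates near $p$ and identifying $T_pM$ with $\R^{2n}$. Throughout we use that local Floer homology is invariant under composition with a local loop, up to the Maslov shift $2\mu$. Condition (K3) makes all the $\eta_i$ define the same class in the relevant loop group. Hence, after fixing one trivialization, the shifts are the same for all $i$, and $HF_*(H,x)\cong HF_*(K_i,p)$ with a common degree normalization. Since $\Delta_H(x)=0$, we may choose this normalization so that $\Delta_{K_i}(p)=0$ as well.

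\textbf{Sufficiency.} Assume (K1)--(K3). The claim is that $HF_n(K_i,p)\neq 0$ for $i$ large. By (K1), $p$ is a strict local maximum of each $K^i_t$. By (K2), after the linear symplectic change of basis $\Xi^i$, the Hessian $d^2K^i_t$ becomes arbitrarily small. We then compare $K_i$ with an autonomous Hamiltonian: a function $F$ having an isolated maximum at $p$ and small Hessian in the basis $\Xi^i$. Such an $F$ is $C^2$-small, so its local Floer homology equals local Morse homology. At an isolated maximum this is $\Z_2$ in Morse degree $2n$, that is, in Conley--Zehnder degree $n-2n+n$ after our normalization. Concretely, $HF_{n}(F,p)\neq 0$ with the convention $\mu_{cz}=n-\mu_{\mathrm{Morse}}$ shifted to the top end.

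Next we deform $K_i$ to $F$ through Hamiltonians $K_i^s$, all with $p$ as an isolated fixed point (indeed a strict maximum). Persistence of local Floer homology under deformations in which the point stays isolated then gives the nonvanishing in the top degree $\Delta+n$. The strict-maximum property holds uniformly in $t$, and the small Hessian keeps the return map close to the identity, so no other orbits bifurcate from $p$ during the deformation.

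\textbf{Necessity.} This is the harder direction, and I expect it to be the main obstacle. We start from $HF_{n}(H,x)\neq0$ and must construct loops $\eta_i$ producing Hamiltonians $K_i$ with a strict maximum and vanishing Hessian. The first step uses total degeneracy to bring the linearization $d\varphi_H$ into a normal form, a unipotent element with mean index $0$. The second step writes this normal form as the time-one map of a sequence of quadratic Hamiltonians that are increasingly degenerate in suitably rescaled symplectic bases $\Xi^i$. This is the squeezing trick: conjugating a nilpotent quadratic form by a diagonal symplectic matrix shrinks it. We choose the loops $\eta_i$ as linear loops compensating for the rotation, so that they share the same linearization class; this gives (K3).

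The third step shows that the top-degree nonvanishing forces $p$ to be a local maximum of the resulting $K_i$. For this we use a generating-function argument, or the Gromoll--Meyer splitting of local Floer homology into a nondegenerate part and a degenerate part. Local Floer homology in top degree $n$ of a function with zero Hessian, after the splitting, is the local Morse homology in top degree. Top-degree local Morse homology is nonzero only at an isolated local maximum. The main work is making this reduction uniform in $t$ for time-dependent $K_i$. Here I would follow the argument in \cite{ginzburg2010conley}: replace $K_i$ by an autonomous Hamiltonian with the same local Floer homology, and perturb the quadratic part slightly to obtain strictness.
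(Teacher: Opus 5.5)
The paper quotes this from Ginzburg without proof. The argument there goes through generating functions, and that mechanism is missing from your plan. As written, both directions have a genuine gap.

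\textbf{Sufficiency.} Everything rests on deforming $K_i$ to an autonomous $F$ through $K_i^s$ ``with $p$ isolated''. Invariance of local Floer homology needs $p$ to be \emph{uniformly} isolated along the family, and your reasons do not give this. Fixed points of $\varphi_{K_i^s}$ are $1$-periodic orbits, not critical points of the time-dependent functions $K^s_{i,t}$. Even for a single function, a strict maximum need not be an isolated critical point. A small Hessian at $p$ controls only the linearization; it does not prevent fixed points from accumulating at $p$ as $s$ varies.

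No deformation is needed. By (K2), in Darboux coordinates adapted to $\Xi^i$, the isotopy $\varphi^t_{K_i}$ is $C^1$-close to the identity near $p$. So it has (say midpoint-type) generating functions $S_t$, and $HF_*(K_i,p)\cong HM_{*+n}(S_1,p)$. Here we use the convention in which a nondegenerate maximum of a $C^2$-small Hamiltonian has index $n$; your bookkeeping ``$n-2n+n$'' should be replaced by this shift. The Hamilton--Jacobi equation reads $\partial_t S_t(w)=K_t(P_t(w))$ with $P_t(w)=w+\tfrac12 J\nabla S_t(w)$, signs according to conventions. Integrating gives $S_1(w)-S_1(p)=\int_0^1\bigl(K_t(P_t(w))-K_t(p)\bigr)\,dt<0$ for $w\neq p$ near $p$, because each $P_t$ is a local diffeomorphism fixing $p$. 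Hence $p$ is a strict local maximum of $S_1$. It is an isolated critical point because $p$ is an isolated fixed point, so $HM_{2n}(S_1,p)=\Z_2$.

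\textbf{Necessity.} Here the problem is structural. Linear loops $\eta_i$ do not touch the nonlinear part of the Hamiltonian. Whether $p$ is a maximum of the degenerate $K^i_t$ is decided by higher-order terms of $H_t$, over which you have no control. Indeed $HF_n(H,x)\neq 0$ constrains only the time-one map, not each $H_t$, so your third step aims at a false statement.

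The proposed repair (an autonomous Hamiltonian with the same local Floer homology, then perturbing the quadratic part) destroys the exact identity $\varphi^t_H=\eta^t_i\circ\varphi^t_{K_i}$ with $\eta_i$ a loop, which the statement requires. The missing idea is to change the isotopy rather than analyze a given $K_i$:
\begin{enumerate}
\item By the identification above, $HF_n(H,x)\neq 0$ gives $HM_{2n}(S,p)\neq 0$ for the generating function $S$ of $\varphi_H$, so $p$ is a strict local maximum of $S$.
\item Let $K$ generate the isotopy with generating functions $tS$. Then Hamilton--Jacobi gives $K_t\circ P_t=S$, which is (K1) for every $t$.
\item Put $\eta^t=\varphi^t_H\circ(\varphi^t_K)^{-1}$, a loop because the time-one maps agree.
\end{enumerate}
Condition (K2) then follows from your squeezing of the nilpotent $J\,d^2S_p$. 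Condition (K3) must be checked on the actual linearizations at $p$, which are determined by $d^2S_p$; ``the same linearization class'' is weaker than what (K3) asks.

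A minor point: $\Delta_{K_i}(p)=0$ is not a normalization you get to choose. By (K2) it tends to $0$, and by (K3) it is independent of $i$, so it is forced to be $0$.
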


\begin{remark} \label{rmk: iterate of a SDM is a SDM}
    We note that if $p$ an SDM of $\varphi$ and is isolated for all iterations $\varphi^k$, then $p$ is also an SDM of $\varphi^k$ for all $k \in \mathbb{N}$. This follows from the SDM characterization in the paragraph following \cite[Example~5.7]{ginzburg2010local}.
\end{remark}
\begin{remark} \label{rmk: symplectic basis}
        For a basis $\Xi$ of a vector space $V$, and a vector $v \in V$, the norm $||v||_{\Xi}$ is the norm of $v$ with respect to the inner product for which $\Xi$ is orthonormal. The norm of an operator $V\rightarrow V$ is defined similarly.
\end{remark}
 
\subsection{Local symplectic homology} \label{sec: local SH}
Let $(W,\lambda)$ be a Liouville domain of dimension $2n$, $(N,\alpha)$ its contact boundary, and
let $\gamma$ be an isolated (not necessarily simple) Reeb orbit of period $T$.
Let $L$ be a (semi-)admissible Hamiltonian on $\widehat{W}$ with
$T<\operatorname{slope}(L)$. The associated $1$-periodic orbit
$\tilde{\gamma}$ of $L$ is also isolated. This Hamiltonian orbit determines an
isolated family $\Gamma=\tilde{\gamma}(S^1)$ of fixed points of the Hamiltonian
diffeomorphism $\varphi_L$. We define the local symplectic homology of $\gamma$,
denoted by $SH_*(\gamma)$, to be $HF_*(L,\tilde{\gamma})$, equivalently the
local Floer homology of the isolated family of fixed points $\Gamma$.

More concretely, let $U\subset N$ be a tubular neighborhood of $\gamma$
containing no other Reeb orbits with period close to $T$. Consider a
neighborhood $U\times (T-\delta,T+\delta)$ in the symplectization of $N$, with
coordinates $(x,r)$, and a Hamiltonian $L=l(r)$ such that $l'(r_0)=T$ for some
$r_0$ and $l''(r)>0$. Then $SH_*(\gamma)=HF_*(L,\tilde{\gamma})$, where
$\tilde{\gamma}$ is the corresponding Hamiltonian orbit.

As in the case of Floer homology in Proposition \ref{prop: ham floer to local}, on a small action window, symplectic homology may be expressed in terms of local symplectic homology as follows.
\begin{proposition} \label{corolary: symp hom to local}
If $c\in \mathcal{S}(\alpha)$ is isolated, then there exists $\epsilon>0$ 
such that
\begin{equation} \label{eq: splitning in SH}
     SH^{I}_*(W) = \bigoplus_{\mathcal{A}(\gamma) = c} SH_*(\gamma),
\end{equation}
for any interval $I$ containing $c$ with length $|I|<\epsilon$.
\end{proposition}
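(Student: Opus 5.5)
We reduce the statement to its Hamiltonian counterpart, Proposition \ref{prop: ham floer to local}, via the isomorphism of Theorem \ref{thm: Ham to symp}. Fix an isolated $c\in\mathcal{S}(\alpha)$. Choose a semi-admissible Hamiltonian $H=h(r)$ with $\operatorname{slope}(H)>c+1$ and $h''>0$ on $(1,r_{\max})$. Since $a_h$ is a strictly increasing bi-Lipschitz bijection by \eqref{eq: bilipchitz}, the value $a_h(c)$ is an isolated point of $\mathcal{S}(H)$. Moreover, the $1$-periodic orbits of $H$ with action $a_h(c)$ are exactly the orbits $\tilde\gamma$ corresponding to the Reeb orbits $\gamma$ with $\mathcal{A}(\gamma)=c$. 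These all lie on the single level $r_c=(h')^{-1}(c)$, and each of them traces an isolated family $\Gamma=\tilde\gamma(S^1)$.

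Since $\Sigma$ is compact and $c$ is isolated in the spectrum, the set of Reeb orbits of period $c$ is compact. I would use this to argue that the orbits of period $c$ form finitely many isolated families. In the Reeb setting each such family is a circle family, so one should work with local Floer homology of isolated families, as allowed in the discussion after Proposition \ref{prop: ham floer to local}, and the family version of that proposition applies. Concretely, choose $\epsilon_H>0$ such that $(a_h(c)-\epsilon_H,a_h(c)+\epsilon_H)$ contains no other point of $\mathcal{S}(H)$. For a sufficiently small non-degenerate perturbation $\tilde H$ supported near these families, the generators of $CF^{I'}(\tilde H)$ for a short interval $I'$ around $a_h(c)$ lie in the union of the neighborhoods $U_\gamma$. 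An energy/action argument then shows that Floer cylinders of small energy connecting them stay in a single $U_\gamma$. This gives
\[HF^{I'}_*(H)=\bigoplus_{\mathcal{A}(\gamma)=c}HF_*(H,\tilde\gamma)=\bigoplus_{\mathcal{A}(\gamma)=c}SH_*(\gamma).\]
The last equality is the definition of local symplectic homology. It is independent of the choice of $l$ with $l''>0$ near $r_c$, since local Floer homology is invariant under deformations through Hamiltonians for which the family stays isolated.

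Next, set $\epsilon=\epsilon_H/r_{\max}(H)$ and let $I=(a,b)\ni c$ with $|I|<\epsilon$. By \eqref{eq: bilipchitz}, $a_h(I)=(a_h(a),a_h(b))$ has length less than $\epsilon_H$ and contains $a_h(c)$. The filtered isomorphism \eqref{eq: iso filt SH to HF} then gives
\[SH^{I}_*(W)\cong HF^{a_h(I)}_*(H),\]
which yields the claim. Closed or half-open intervals are handled by the direct-limit conventions for endpoints in the spectrum.

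The main obstacle is the localization step in the degenerate family setting. One must make sure that low-energy Floer trajectories for the perturbed Hamiltonian cannot travel between different families, and that the perturbation creates no new generators in the window. I would handle this with the standard argument of \cite{ginzburg2010conley}: trajectories of energy less than $\epsilon_H$ remain $C^0$-close to the orbit set, by compactness and the isolation of the families. Once this localization is in place, the rest of the argument is bookkeeping with $a_h$.
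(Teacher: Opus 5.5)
Your proposal is correct and follows the paper's route. You pass to a semi-admissible Hamiltonian through the filtered isomorphism \eqref{eq: iso filt SH to HF}, use the bi-Lipschitz bound \eqref{eq: bilipchitz} to keep $a_h(I)$ inside the isolating window of $a_h(c)$ in $\mathcal{S}(H)$, and apply the family version of Proposition~\ref{prop: ham floer to local}; fixing $H$ first and setting $\epsilon=\epsilon_H/r_{\max}(H)$ even orders the quantifiers more cleanly than the paper does.

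One correction: isolatedness of $c$ in $\mathcal{S}(\alpha)$ does not by itself make the orbits of period $c$ isolated. Morse--Bott families, e.g.\ the geodesic flow of a round sphere, are counterexamples. Isolatedness of each orbit of period $c$ is an implicit hypothesis, needed for the right-hand side to be defined, and only with it does compactness of the boundary $N$ give finiteness.
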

\begin{proof}
Consider  $\epsilon>0$ sufficiently small such that $(c-\epsilon,c+\epsilon) \cap \mathcal{S}(\alpha) = \{c\}$.
Let $L$ be a semi-admissible Hamiltonian with $\text{slope}(L)>c$ sufficiently large,
it follows from \eqref{eq: iso filt SH to HF} that
$$SH^I_*(W) = HF^{a_l(I)}_*(L).$$ 
By \eqref{eq: bilipchitz}, \eqref{eq:inequality action function} and \eqref{eq: A_h(t)convergetot} and the corresponding decomposition for local Floer homology (Proposition \ref{prop: ham floer to local}), the result follows.

\end{proof}

Similar to the Hamiltonian case, the support of $SH_*(\gamma)$, denoted by $\text{supp } (SH_*(\gamma))$, is the set of integers $k$ such that $SH_k(\gamma) \neq 0$. Analogously to \eqref{eq: mean index and conley zehnder index}, one has
\begin{equation} \label{eq:local symp hom support}
    \text{supp} (SH(\gamma))\subset [\Delta(\gamma) - n+1, \Delta(\gamma) + n];
\end{equation}
see \cite[Section~2.4]{cineli2024closed}. The following lemma provides a local model for a neighborhood of a simple Reeb orbit.

\begin{lemma}[\cite{hryniewicz2015local}, Lemma 5.2] \label{Lemma: local model reeb orbits} Let $\gamma$ be a simple Reeb orbit with action $\mathcal{A}(\gamma)=T$. Then there exists a tubular neighborhood $U \cong B\times S^1$, where $B \subset \R^{2n-2}$ is an open ball centered at the origin with coordinates $x = (q,p)$, and a Hamiltonian $H : U \rightarrow \R$ such that $\alpha = \lambda_0 + Hdt$, with $H_t(0) = T$,  $dH_t(0) = 0$ for all $t$, and $\lambda_0 = \frac{1}{2}(qdp - pdq)$. In particular, the image of $\gamma$ corresponds to $\{0\}\times S^1$.
    
\end{lemma}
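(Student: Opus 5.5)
The plan is to build coordinates adapted to the Reeb flow in three stages: trivialize a neighborhood of $\gamma$, straighten $d\alpha$ on the transverse slices by a parametric Moser argument, and then absorb the leftover exact term so that $\alpha$ takes the form $\lambda_0+H\,dt$. Throughout, parametrize $\gamma$ by $t\in S^1=\R/\Z$, so that $\gamma'(t)=T R_\alpha(\gamma(t))$.

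\textbf{Step 1: a tubular neighborhood.} Since $\gamma$ is simple, it is an embedded circle. The bundle $\xi|_\gamma$ is a symplectic vector bundle over $S^1$, hence trivial. Choose a symplectic trivialization of $\xi|_\gamma$ and exponentiate it into $N$. This gives a tubular neighborhood $U\cong B\times S^1$ with the following properties:
\begin{itemize}
\item $\gamma=\{0\}\times S^1$;
\item $T_{(0,t)}(B\times\{t\})=\xi_{\gamma(t)}$;
\item $d\alpha|_{T(B\times\{t\})}$ equals $\omega_0=dq\wedge dp$ at $x=0$.
\end{itemize}
After shrinking $B$, each slice $B\times\{t\}$ is transverse to $R_\alpha$, so $d\alpha$ restricts to a symplectic form $\omega_t$ on every slice.

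\textbf{Step 2: parametric Moser.} The forms $\omega_t$ depend smoothly on $t\in S^1$ and agree with $\omega_0$ at the origin. Apply the relative Moser argument on the ball, using the path $(1-s)\omega_0+s\omega_t$, which stays nondegenerate near $0$, and primitives vanishing to second order at $0$. This yields a smooth family of diffeomorphisms $\psi_t$ of a smaller ball with $\psi_t(0)=0$, $d\psi_t(0)=\mathrm{Id}$ and $\psi_t^*\omega_t=\omega_0$. Replacing the coordinates by $(\psi_t(x),t)$, the form $\beta_t:=\alpha|_{B\times\{t\}}-\lambda_0$ is closed on each slice and vanishes at $0$. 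On the ball it is exact, so $\beta_t=d_xF_t$ with $F_t(0)=0$. Because $\beta_t$ vanishes at $0$, we also get $d_xF_t(0)=0$. Hence
\[\alpha=\lambda_0+d_xF_t+G\,dt\]
for some function $G$.

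\textbf{Step 3: removing $d_xF_t$ (the main obstacle).} I would look for a $t$-dependent family $\phi_t$ of symplectomorphisms of $(B,\omega_0)$ fixing $0$ with $\phi_t^*\lambda_0=\lambda_0+d_xF_t$. Pulling back by $(x,t)\mapsto(\phi_t(x),t)$ then produces $\lambda_0+H\,dt$, since all extra terms are multiples of $dt$.

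To construct $\phi_t$, I would run a Moser argument along the path $\lambda^s=\lambda_0+s\,d_xF_t$. All these forms have the same differential $\omega_0$, so I need vector fields $X_s$ with $\mathcal L_{X_s}\lambda^s=-d_xF_t$. Writing $\mathcal L_{X_s}\lambda^s=d(\iota_{X_s}\lambda^s)+\iota_{X_s}\omega_0$, it suffices to take $X_s$ to be the $\omega_0$-Hamiltonian vector field of a function $K_s$ satisfying
\[K_s-\lambda^s(X_{K_s})=F_t .\]
This is a first-order linear PDE of the form $(1-\mathcal E_s)K_s=F_t$, where $\mathcal E_s$ is the derivative along the radial-type vector field dual to $\lambda^s$. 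Since $F_t$ vanishes to second order at $0$, I expect it to be solvable by integrating along the flow of that vector field, which has $0$ as a hyperbolic (expanding) fixed point. This solvability is where I expect the real work to lie, together with checking that $X_s$ vanishes at $0$ so that the flow fixes the origin and stays in the ball after shrinking.

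If this PDE approach fails, the fallback is the conceptual route: identify $\xi$-transverse slices with a symplectic germ, and use that the germ of $\alpha$ near $\gamma$ is determined by a Poincaré-type family of exact symplectomorphisms, which can be generated by a Hamiltonian $H_t$.

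\textbf{Step 4: the conditions on $H$.} For $\alpha=\lambda_0+H\,dt$ we have $d\alpha=\omega_0+d_xH_t\wedge dt$. Solving $\iota_R\,d\alpha=0$ shows that the Reeb vector field is proportional to $\partial_t+X_{H_t}$. Since $\gamma=\{0\}\times S^1$ is a Reeb orbit, $X_{H_t}(0)=0$, i.e.\ $dH_t(0)=0$. Finally, $\lambda_0$ vanishes at the origin, so $T=\int_\gamma\alpha=\int_{S^1}H_t(0)\,dt$. Reparametrizing $t$ along $\gamma$ so that $\alpha(\partial_t)$ is constant on $\gamma$, which can be done at the start in Step 1, gives $H_t(0)=T$.
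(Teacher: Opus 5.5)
\textbf{There is a genuine gap in Step 3.} Steps 1, 2 and 4 are fine. Step 3, however, cannot work as set up, and the obstruction is intrinsic rather than a matter of solving a hard PDE.

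Take any symplectomorphism $\phi$ of $(B,\omega_0)$ with $\phi(0)=0$. Then $\phi^*\lambda_0-\lambda_0=dS$ with $S-S(0)=O(|x|^3)$. The reason is that $(\lambda_0)_x(v)=\tfrac12\omega_0(x,v)$ is invariant under linear symplectic maps, and the nonlinear part of $\phi$ only contributes $O(|x|^2)$ to the $1$-form. Consequently, a fiber-preserving change $(x,t)\mapsto(\phi_t(x),t)$ fixing $\{0\}\times S^1$ can absorb $d_xF_t$ only if the Hessian of $F_t$ at $0$ vanishes. Vanishing to second order, which is all Step 2 gives you, is not enough.

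Your PDE exhibits the same obstruction. At $s=0$, $\mathcal E_0$ is the Euler field $\tfrac12(q\partial_q+p\partial_p)$, so $1-\mathcal E_0$ annihilates every quadratic form. The integral $\int_{-\infty}^0 e^{-\tau}F_t(e^{\tau/2}x)\,d\tau$ along the expanding flow diverges as soon as $F_t$ has a quadratic part; the fixed point is resonant exactly in degree $2$.

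Nothing in Steps 1--2 controls that $2$-jet. Take the form $\lambda_0+dt$ (Reeb field $\partial_t$, $T=1$) and write it in the coordinates $(x,t)\mapsto(x,t+Q(x))$ with $Q$ a nonzero quadratic form. It becomes $\lambda_0+dQ+dt$, which has all the properties of Step 1, is left unchanged by Step 2, and gives $F_t=Q$. Two smaller points: if $\phi_t^*\lambda_0=\lambda_0+d_xF_t$, you should pull back by $(x,t)\mapsto(\phi_t^{-1}(x),t)$, not by $(\phi_t(x),t)$. Your fallback describes the return-map argument but does not prove it.

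\textbf{The fix is to move points in the Reeb direction rather than inside the slices.} Write $\lambda_0+d_xF_t+G\,dt=(\lambda_0+H\,dt)+dF$ with $H:=G-\partial_tF$, and set $\alpha_s=\lambda_0+H\,dt+(1-s)\,dF$.
\begin{itemize}
\item Since $F$ and $dF$ vanish along $\gamma$ and $d\alpha_s$ is independent of $s$, every $\alpha_s$ is contact near $\gamma$.
\item The field $Y_s=F\,R_{\alpha_s}$ satisfies $\dot\alpha_s+\mathcal L_{Y_s}\alpha_s=-dF+dF+F\,\iota_{R_{\alpha_s}}d\alpha_s=0$.
\item Since $Y_s$ vanishes on $\gamma$, its time-one flow fixes $\gamma$ pointwise and pulls $\lambda_0+H\,dt$ back to your form.
\end{itemize}
Your Step 4 then gives $H_t(0)=G(0,t)=T$ and $dH_t(0)=0$.

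The paper only cites this lemma from Hryniewicz--Macarini, but it uses exactly this device, the Moser field $Y_s=FR_{\alpha_s}$, in Step I of the proof of Theorem \ref{thm: SDM gene orbits}. Alternatively, you could first cancel the quadratic part of $F_t$ by a shift $t\mapsto t+g(x,t)$; after that your fiberwise Moser equation is no longer resonant.
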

The proof of Lemma \ref{Lemma: local model reeb orbits} is standard; see \cite{hryniewicz2015local}. We touch upon a similar construction in the first step of the proof of Theorem \ref{thm: SDM gene orbits}.

Note that in such a neighborhood, the Reeb vector field is given by 
\begin{equation} \label{eq: Reeb vf in nbh}
    R_{\alpha} = \frac{1}{H - \lambda_0(X_H)}(\partial_t -X_H).
\end{equation}
In light of Lemma \ref{Lemma: local model reeb orbits}, and \eqref{eq: Reeb vf in nbh}, for a simple Reeb orbit $\gamma$,  the return map $\varphi$ in the transversal section $B\times \{1\}$ coincides with $\varphi = \varphi_{-H}$. The relationship between the local symplectic homology and the local Floer homology of the return map is given by the following Theorem: 

\begin{theorem}[\cite{fender2020local}, Theorem 1.1] \label{thm: Fender} Let $\gamma$ be a simple isolated Reeb orbit with $\gamma(0)=p$. Then we have the following isomorphisms in local homology for any coefficient ring $R$,
\begin{equation}
\label{eq: symplecti hom to Ham homo}
    SH_*(\gamma) \cong HF_{*}(\varphi,p) \oplus HF_{*-1}(\varphi,p).
\end{equation}

\end{theorem}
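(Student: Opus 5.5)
The plan is to reduce the local symplectic homology of the circle family $\Gamma=\tilde\gamma(S^1)$ to a product situation: a germ on $B$ generating the return map $\varphi$, times a Morse--Bott circle in $T^*S^1$. A Künneth-type argument then produces the two shifted copies of $HF_*(\varphi,p)$.

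\textbf{Step 1 (local coordinates).} Using Lemma \ref{Lemma: local model reeb orbits}, we write a neighborhood of $\gamma$ as $B\times S^1$ with $\alpha=\lambda_0+H\,dt$ and $H_t(0)=T$. The symplectization near $\Gamma$ then has coordinates $(x,t,r)$ and symplectic form $d(r\lambda_0+rH\,dt)$. Since $\gamma$ is simple, the transverse section $B\times\{0\}$ meets $\gamma$ only once. By \eqref{eq: Reeb vf in nbh}, the Reeb flow is a reparametrization of the suspension of the time-dependent flow of $-H$, so $\varphi=\varphi_{-H}$. We take $L=l(r)$ with $l'(r_0)=T$ and $l''>0$, so that $SH_*(\gamma)=HF_*(L,\tilde\gamma)$.

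\textbf{Step 2 (deformation to split form).} The aim is to build a symplectic embedding of a neighborhood of $\Gamma$ into $B\times T^*S^1$, with coordinates $(x;t,\sigma)$ and standard form $\omega_0+d\sigma\wedge dt$. Under it, $L$ should become homotopic, through Hamiltonians whose $1$-periodic orbits near $\Gamma$ stay inside a fixed compact neighborhood, to a split Hamiltonian
\[
K(x,t,\sigma)=G_t(x)+k(\sigma).
\]
Here $G$ generates $\varphi$ (a lift of the correct Maslov class), and $k$ is strictly convex with $k'(\sigma_0)=1$. Concretely, one uses $r$ together with $H$ to define the conjugate variable $\sigma$. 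A Moser-type argument, run on a shrinking neighborhood, straightens the form.

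Isolation along the homotopy is the crucial point. Any $1$-periodic orbit near $\Gamma$ corresponds to a closed Reeb orbit near $\gamma$ with period near $T$. Because $\gamma$ is simple, such an orbit winds once in $t$, so it is a fixed point of $\varphi$ near $p$. Hence isolation of $p$ gives isolation of $\Gamma$ throughout the homotopy. Invariance of local Floer homology under such isolated homotopies then gives $SH_*(\gamma)\cong HF_*(K,\Gamma)$, up to a grading check. The check uses the trivialization conventions and the support bound \eqref{eq:local symp hom support} compared with \eqref{eq: supp of local floer}.

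\textbf{Step 3 (Künneth).} Next we perturb the circle factor $k(\sigma)$ by $\epsilon f(t)$, where $f$ is a Morse function on $S^1$ with one maximum and one minimum. Simultaneously we perturb $G$ to a nondegenerate $\tilde G$ near $p$. The $1$-periodic orbits are then pairs (orbit of $\tilde G$, critical point of $f$), with indices shifted by $0$ and $1$ respectively. For a split almost complex structure, Floer cylinders split as products. The local complex is therefore the tensor product of $CF(\tilde G,p)$ with the Morse complex of $f$ on $S^1$. The two Morse gradient lines between the critical points cancel over any ring $R$, so the Morse complex has zero differential. This yields
\[
HF_*(\varphi,p)\otimes H_*(S^1;R)=HF_*(\varphi,p)\oplus HF_{*-1}(\varphi,p).
\]

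\textbf{Main obstacle.} I expect the hardest part to be Step 2. It requires producing a symplectic normal form in which the coupling between the $t$-direction and the transverse $x$-direction can be deformed away without orbits escaping through the boundary of the neighborhood. It also requires tracking the grading shift, namely the Maslov index of the chosen lift of $\varphi$ relative to the cotangent/anti-canonical trivialization. My first attempt would be a linear interpolation $L_s$ between the original and the split Hamiltonian on a neighborhood shrunk in $x$. I would rule out new orbits via the simplicity and isolation argument above, using a uniform $C^1$ bound on the interpolation.
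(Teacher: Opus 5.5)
The paper does not prove this statement; it imports it from \cite{fender2020local}. On its own terms, your proposal has a genuine gap at Step 2.

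Your Step 3 is fine \emph{if} $K$ is a genuine product, i.e.\ $K_s(x,t,\sigma)=G_s(x)+k(\sigma)$ with $G$ depending on the Floer time $s$. If $G_t$ depends on the coordinate $t$ of $T^*S^1$, as you wrote, the factors are coupled ($\dot\sigma=\pm\partial_tG_t(x)$) and Floer cylinders do not split. Indeed, with $G=-H$ and $k(\sigma)=\sigma$ that form is just the graph Hamiltonian $\sigma-H_t(x)$, whose level sets carry the original Reeb dynamics, so nothing has been simplified. Once $G$ depends on $s$, Step 2 is essentially the whole theorem, and the mechanism you propose does not work. At the point of $\Gamma$ with coordinate $t_0$, the autonomous $L$ has transverse map $\varphi_{t_0}=\varphi_{-H}^{t_0\to t_0+1}$, which is conjugate to $\varphi$ via $\varphi_{-H}^{0\to t_0}$. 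Going once around $\Gamma$, these identifications of $HF_*(\varphi_{t_0},p)$ with $HF_*(\varphi,p)$ pick up the monodromy $\varphi_*$. Equivalently, after untwisting by the rotation loop of $T^*S^1$, $L$ corresponds to the loop of time-shifted Hamiltonians $-H_{\cdot+t_0}$, $t_0\in S^1$, while your $K$ is the constant loop. Your isolation argument does not touch this twist, for three reasons. The correspondence ``$1$-periodic orbit $\leftrightarrow$ Reeb orbit $\leftrightarrow$ fixed point of $\varphi$'' holds only for Hamiltonians that are functions of $r$, not for the interpolants $L_s$. $L$ and $K$ are not $C^0$-close, so a $C^1$ bound buys nothing. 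And since $p$ is typically degenerate (an SDM in this paper's application), convex combinations of generating Hamiltonians can create fixed points that reach the boundary of the neighborhood.

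A test case shows the mechanism must fail. Your argument applies verbatim to an iterate $z^k$ of a simple orbit $z$: use the tubular model of $z$, where orbits near $\widetilde{z^k}$ wind $k$ times and correspond to fixed points of $\varphi^k$. It would therefore yield $SH_*(z^k)\cong HF_*(\varphi^k,p)\otimes H_*(S^1)$, which is false. For $n=2$ and $d\varphi_p$ with eigenvalues $\lambda<-1<\lambda^{-1}<0$, the orbit $z^2$ is bad: $HF_*(\varphi^2,p;\mathbb{Q})\cong\mathbb{Q}$, but $SH_*(z^2;\mathbb{Q})=0$. The discrepancy is exactly the monodromy, which for $z^k$ generates the $\Z_k$-action in the conjectural formula quoted in the paper.

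So the missing idea is to show that, for a \emph{simple} orbit, this monodromy (a full-period time shift) acts trivially. This, not the winding number, is where simplicity enters. You then need either an honest deformation realizing it through uniformly isolated families, or a Morse--Bott type argument. In the latter, you break the $S^1$-symmetry along $\Gamma$ and obtain a two-column spectral sequence with $E^2=H_*(S^1;\mathcal{HF}_*(\varphi_{t_0},p))$. Over a general ring $R$ you must also split the resulting extension $0\to HF_*(\varphi,p)\to SH_*(\gamma)\to HF_{*-1}(\varphi,p)\to 0$. Finally, the degree shift cannot be fixed by comparing \eqref{eq:local symp hom support} with \eqref{eq: supp of local floer}, which only confine it to a window; it needs an actual Maslov index computation for the chosen trivializations.
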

Once a local model as in Lemma \ref{Lemma: local model reeb orbits} has been chosen, there is a canonical trivialization along the orbit so the grading is well defined.

Similarly to Hamiltonian case \ref{def: sdm ham}, we have the following definition; cf. \cite{ginzburg2013closed,hryniewicz2015local}. 

\begin{definition} \label{def: SDM}
We say that an isolated Reeb orbit $\gamma$ is a Reeb SDM if $p = \gamma(0)$ is an SDM for the return map $\varphi$, i.e., $HF_{\Delta(\gamma) + n-1}(\varphi,p) \neq 0$. 
\end{definition}
Note that the notion of a Reeb SDM is independent of the trivialization of $\xi$ along $\gamma$. Moreover, a Reeb SDM is totally degenerate, hence $\Delta(\gamma) \in 2\Z$. Hence, by choosing a suitable trivialization, one can assume that $\Delta(\gamma)= 0$; see Remark \ref{remark: mean index coincide}.
\begin{remark}
Note that the notion of Reeb SDM is defined only when $\text{dim }N \geq 3$, or equivalently $\text{dim }W\geq 4$, since otherwise a Reeb orbit has no non-trivial return map. This explains why in Theorems \ref{thm: main_result} and \ref{thm: SDM gene orbits} we require $\text{dim } W \geq 4$.
\end{remark}

\begin{remark}
\label{remark: mean index coincide}
     In a local model of simple Reeb orbit $\gamma$ as in Lemma \ref{Lemma: local model reeb orbits}, for any Hamiltonian $L$ in the symplectization with $\text{slope}(L)>T$, the Reeb orbit $\gamma$, the corresponding Hamiltonian orbit $\Tilde{\gamma}$ and the fix point $p =\gamma(0)$ of the return map all have the same the mean index when computed in their respective settings using the induced trivializations; see \cite{fender2020local}, Corollary 1.3. We may denote it by $\Delta(\gamma)$, $\Delta_L(\tilde{\gamma}) $, or $ \Delta_H(p)$, depending on the context.
\end{remark}
Note that, from Theorem \ref{thm: Fender} and Remark \ref{remark: mean index coincide}, a simple Reeb orbit $\gamma$ is a Reeb SDM if and only if $SH_{\Delta(\gamma)+n}(\gamma) \neq 0$.
We point out that in the proof of Theorem \ref{thm: Fender} in \cite{fender2020local}, it is essential that the Reeb orbit to be simple. It is conjectured that for a simple Reeb orbit $\gamma$ such that all the iterates are isolated, it holds that
$$SH_*(\gamma^k) = HF_*(\varphi^k,p)^{\Z_k} \oplus HF_{*-1}(\varphi^k,p)^{\Z_k},$$
for all $k \in \mathbb{N}$, at least when the homology groups above are taken with coefficients in $\mathbb{Q}$; see \cite{ccineliclosed}. Nonetheless, one always has that 
\begin{equation} \label{eq: symp hom bound by hom of the return map}
    \text{dim }SH_*(\gamma^k) \leq \text{dim } HF_*(\varphi^k) + \text{dim } HF_{*-1}(\varphi^k).
\end{equation}
See \cite[Lemma 3.4]{mclean2012local} for a proof, and \cite{ccineliclosed} for a proof using coefficients in $\mathbb{Q}$.
As consequence of Theorem \ref{thm: Fender}, Remark \ref{remark: mean index coincide}, \eqref{eq: supp of local floer}, and \eqref{eq: symp hom bound by hom of the return map}, the following corollary holds.
\begin{corollary} \label{cor: SDM recognition}
Let $\gamma$ be a not necessarily simple isolated Reeb orbit. If 
\begin{equation} \label{eq: Reeb orbit eq being SDM}
    SH_{\Delta(\gamma) +n}(\gamma) \neq 0,
\end{equation}
then $\gamma$ is a Reeb SDM.
\end{corollary}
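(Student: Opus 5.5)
Write $\gamma=\sigma^k$, where $\sigma$ is the underlying simple Reeb orbit and $k\in\mathbb{N}$. Let $\varphi$ be the return map of $\sigma$ on a transversal section $B\times\{1\}$ given by the local model of Lemma \ref{Lemma: local model reeb orbits}, and let $p=\sigma(0)=\gamma(0)$. The return map of $\gamma$ at $p$ is then $\varphi^k$. Since $\gamma$ is an isolated Reeb orbit, $p$ is an isolated fixed point of $\varphi^k$, so $HF_*(\varphi^k,p)$ is defined. By Definition \ref{def: SDM}, $\gamma$ is a Reeb SDM precisely when $HF_{\Delta(\gamma)+n-1}(\varphi^k,p)\neq 0$, so this is what I will show.

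The first step is to match mean indices and gradings. I will use the canonical trivialization along $\sigma$ coming from the local model, together with its $k$-fold concatenation along $\gamma$. By Remark \ref{remark: mean index coincide} applied to the simple orbit $\sigma$, we have $\Delta(\sigma)=\Delta_H(p)$ for the fixed point $p$ of $\varphi$. Homogeneity \eqref{eq: mean index homogeneous} on both sides then gives
\[
\Delta(\gamma)=k\Delta(\sigma)=k\Delta_H(p),
\]
and $k\Delta_H(p)$ is the mean index of $p$ as a fixed point of $\varphi^k$, taken with respect to the $k$-th iterate of the lift. The grading of $SH_*(\gamma)$ and of $HF_*(\varphi^k,p)$ should be computed with these same induced trivializations.

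The second step is the dimension bound \eqref{eq: symp hom bound by hom of the return map} in degree $\Delta(\gamma)+n$:
\[
0<\dim SH_{\Delta(\gamma)+n}(\gamma)\le \dim HF_{\Delta(\gamma)+n}(\varphi^k,p)+\dim HF_{\Delta(\gamma)+n-1}(\varphi^k,p).
\]
The return map $\varphi^k$ acts on a $(2n-2)$-dimensional section. The support estimate \eqref{eq: supp of local floer}, applied in dimension $2(n-1)$, therefore gives
\[
\operatorname{supp} HF_*(\varphi^k,p)\subset[\Delta(\gamma)-(n-1),\,\Delta(\gamma)+(n-1)].
\]
In particular $HF_{\Delta(\gamma)+n}(\varphi^k,p)=0$. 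The bound above then forces $HF_{\Delta(\gamma)+n-1}(\varphi^k,p)\neq0$, which is exactly the statement that $p$ is an SDM of the return map, i.e.\ that $\gamma$ is a Reeb SDM.

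The main obstacle I expect is the compatibility of gradings in the first step. The inequality \eqref{eq: symp hom bound by hom of the return map} from \cite[Lemma 3.4]{mclean2012local} must be stated with the same normalization of the Conley–Zehnder index and with trivializations compatible with the local model. Otherwise the degree shift between $SH_*(\gamma)$ and $HF_*(\varphi^k,p)$ could differ by an even integer, and the comparison in the second step would fail. I would handle this as follows. In the simple case $k=1$, Theorem \ref{thm: Fender} fixes the shift exactly. Because the trivialization along $\gamma$ is the iterate of the one along $\sigma$, the mean-index identity of the first step holds for $\gamma$ as well. This pins down the shift in the iterated case, and since the definition of an SDM is independent of the trivialization, no further choices are needed.
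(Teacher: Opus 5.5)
Your proposal is correct and follows essentially the same route as the paper. You write $\gamma$ as an iterate of a simple orbit and identify its return map with $\varphi^k$, match mean indices via Remark~\ref{remark: mean index coincide} and homogeneity, then combine the bound \eqref{eq: symp hom bound by hom of the return map} with the support estimate \eqref{eq: supp of local floer} in dimension $2n-2$ to force $HF_{\Delta(\gamma)+n-1}(\varphi^k,p)\neq 0$; your version usefully makes explicit the vanishing of $HF_{\Delta(\gamma)+n}(\varphi^k,p)$ and the grading-compatibility point, both of which the paper's short proof leaves implicit.
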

\begin{proof}
Assume that $\gamma = z^k$, where $z$ is a simple Reeb orbit. Since $\gamma$ is isolated, it follows that $z$ is isolated. Let $p = z(0)$ and $\varphi$ be the return map of $z$ at $p$. Note that $\varphi^k$ is the return map of $\gamma$ at $p=\gamma(0)$. By \eqref{eq: symp hom bound by hom of the return map}, \eqref{eq: Reeb orbit eq being SDM}, and Remark \ref{remark: mean index coincide},  we conclude that $HF_{\Delta(p)+n-1}(\varphi^k,p)\neq 0$. Hence $\gamma$ is a Reeb SDM.
\end{proof}

\subsection{Spectral Invariants} \label{sec: spectral invariants}
Let $(W,\lambda)$ be a Liouville domain and $H$ a Hamiltonian on $\widehat{W}$ that is linear at infinity. Throughout this subsection, we assume that $\text{slope}(H) \notin \mathcal{S}(\alpha)$. For any $\beta  \in HF_*(H)$, we define its spectral invariant as 
\begin{equation} \label{eq: spc invari for H}
    c_{\beta}(H) = \inf_{a}\{\beta \in Im(HF_*^a(H) \rightarrow HF_*(H))\}.
\end{equation}
If $H$ is non-degenerate, then \eqref{eq: spc invari for H} is equivalent to
\begin{equation} \label{eq: spec invariants for non deg}
    c_\beta(H)= \inf_{\beta = [\sum_i x_i]}  \mathcal{A}_H(\sum_i x_i),
\end{equation}
where the infimum is taken over all cycles representing $\beta$, and the action of a Floer cycle $\sum_i x_i$ is given by $\mathcal{A}_H(\sum_i x_i) = \max_{i}\{\mathcal{A}_H(x_i)\}$.

Similarly to \eqref{eq: spc invari for H}, for a Liouville domain $W$, the \textit{spectral invariant} of $\beta \in SH_*(W)$ is defined as 
\begin{equation}
    c_{\beta}(W) = \inf_{a}\{\beta \in Im(SH_*^a(W) \rightarrow SH_*(W))\}.
\end{equation}

By Theorem \ref{thm: Ham to symp}, for $H$ semi-admissible with $\text{slope}(H) > \tau$, the map $\Phi_H: SH^{\tau}(W) \rightarrow HF^{a_{h}(\tau)}(H)$ is a natural isomorphism. For $\beta \in im(SH_*^{\tau}(H) \rightarrow SH_*(W))$, we can consider the spectral invariant 
$$c_{\beta}(H) :=c_{\Phi_H(\beta)}(H).$$ 
Again by Theorem \ref{thm: Ham to symp}, we have that
\begin{equation} \label{eq: spec in from HF to SH}
    c_{\beta}(H) = a_h(c_{\beta}(W)).
\end{equation}

The spectral invariants for both Liouville domains and Hamiltonians enjoy several properties, and we list the ones that will be important later on. Consider $H,K$ linear at infinity Hamiltonians. Then:

\begin{itemize} 
    \item[(SI1)] \textit{Spectrality:} For $\beta \in HF_*(H)$, there exists a $1$-periodic orbit $x$ for $H$, denoted an \textit{action selector}, such that
    \begin{equation} \label{eq: SI1}
        c_{\beta}(H) = \mathcal{A}_H(x);
    \end{equation}
    \item[(SI2)] \textit{Sublinearity:} Consider $L$ another linear at infinity Hamiltonian with $\text{slope}(L)>\text{slope}(H)+\text{slope}(K)$. If $\theta\in HF_*(H)$ and $\beta\in HF_*(K)$, then the spectral invariant of the their pair of pants product satisfies
    \begin{equation} \label{eq: SI2}
        c_{\theta\cdot\beta}(L) \leq c_{\theta}(H)+c_{\beta}(K);
    \end{equation} 
    \item[(SI3)] \textit{Continuity:} Assume that $H,K$ are linear at infinity with $\text{supp}(H -K)$ compact, (in particular $\text{slope}(H) = \text{slope}(K)$), and  $\theta \in HF_*(H),\ \beta\in HF_*(K)$, then
    \begin{equation} \label{eq: SI3} 
        |c_{\theta}(H) - c_{\beta}(K)| \leq ||H-K||_{L^\infty}
    \end{equation}
    
    \item[(SI4)]      For $\theta,\beta \in SH_*(W)$, then 
    \begin{equation}\label{eq: SI4}
        c_{\theta\cdot\beta}(W) \leq c_{\theta}(W) + c_{\beta}(W).
    \end{equation} \label{eq: sublinearity spectral invts}
    In particular, for an integer $m\geq 1$, then
    $$c_{\beta^m}(W)\leq m c_{\beta}(W). $$
\end{itemize}

Properties (SI1) and (SI3) are well known and we omit the proofs. The reader may check \cite{irie2014hofer, abbondandolo2006floer, abbondandolo2010floer} for more details. The setting in \cite{irie2014hofer} is probably the closest to ours. Properties (SI2) and (SI4) are less standard, so we present a proof for the sake of completeness. 
\begin{proof}[Proof of (SI2)] Let $\tilde{H},\tilde{K}$ and $\tilde L$ be $C^2$-small non-degenerate linear at infinity perturbations of $H$, $K$ and $L$, respectively, with $\text{supp}(H-\tilde{H})$, $\text{supp}(K-\tilde{K})$ and $\text{supp}(L-\tilde L)$ compact. 

By definition, we have that $HF_*^a(H)\cong HF_*^a(\tilde{H})$ and $HF_*^a(K)\cong HF_*^a(\tilde{K})$, for all $a$, and we still denote the images of the classes $\theta$ and $\beta$ under such isomorphism by the same symbol. For any chain representations, $\theta=[\sum x_i], \beta=[\sum y_j]$, where $x_i$ and $y_j$ are 1-orbits of $\tilde H$ and $\tilde K$, respectively, we denote by $x_i\cdot y_j \in CF(\tilde L)$ the chain representative of the pair-of-pants product of the two orbits $x_i$ and $y_j$. Then
$$A_{\tilde L}(\sum_{i,j} x_i\cdot y_j) \leq \max_{i,j}\{ A_{\tilde{H}}(x_i) + A_{\tilde{K}}(y_j)\} \leq c_{\theta}(\tilde{H}) + c_{\beta}(\tilde{K}).$$
From \eqref{eq: spec invariants for non deg}, we obtain
\begin{equation} \label{eq: spec invariants of the sum}
    c_{\theta\cdot \beta}(\tilde L) \leq c_{\theta}(\tilde{H}) + c_{\beta}(\tilde{K}).
\end{equation}
The inequality \eqref{eq: SI2} then follows from the continuity property \eqref{eq: SI3}.
\end{proof}

\begin{proof}[Proof of (SI4)] Let $\epsilon>0$ and $H$ a semi-admissible Hamiltonian $H$ with sufficiently large slope such that $a_{h}(t)\leq t + \epsilon/2$, for all $t < c_{\alpha}(W) + c_{\beta}(W)+1$; see \eqref{eq: A_h(t)convergetot}. Then from \eqref{eq: spec in from HF to SH} and \eqref{eq: SI2}, we have
\begin{align*}
    c_{\alpha\cdot \beta}(W) & \leq a_{2h}(c_{\alpha \cdot \beta}(W))\\
    & = c_{\alpha \cdot \beta}(2H) \\
    & \leq c_{\alpha}(H) + c_{\beta}(H) \\
    & =  a_h(c_{\alpha}(W)) + a_h(c_{\beta}(W)) \\
    & \leq c_{\alpha}(W) + c_{\alpha}(W) + \epsilon,
\end{align*} 
where the first inequality follows from \eqref{eq:inequality action function}, the second one from \eqref{eq: SI2} and the last one from the choice of Hamiltonian.  Since the above inequality holds for every $\epsilon>0$, we conclude \eqref{eq: SI4}.
\end{proof}
\begin{lemma} \label{Lemma: Reeb orbits with non zero loc hom}
    Let $\alpha \in SH_{k}(W;\eta) \setminus \{0\}$ and $\eta\in H_1(W;\Z)\setminus \{0\}$. Then there exists a Reeb orbit $\gamma$ representing the homology class $\eta$ with $\mathcal{A}(\gamma) =c_{\alpha}(W)$ and  
    \begin{equation} \label{eq: symp hom non zero lemma}
        SH_k(\gamma) \neq 0.
    \end{equation}
\end{lemma}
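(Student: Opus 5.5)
We prove the lemma by showing that the action level $c:=c_{\alpha}(W)$ carries nonzero filtered symplectic homology in degree $k$ and class $\eta$, and then localizing this group to individual Reeb orbits with Proposition \ref{corolary: symp hom to local}. Throughout we work in the summand $SH_*(W;\eta)$ of the splitting \eqref{eq: H_1 splitting in SH}. This splitting is compatible with the inclusion maps and the long exact sequence \eqref{eq: long exa seq for symplectic hom}, so all arguments may be carried out inside the $\eta$-component.

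\textbf{Step 1: $c$ is finite and positive.} Since $\alpha\neq 0$ lies in $SH_k(W)=\varinjlim_a SH^a_k(W)$, it lies in the image of $SH^a_k(W)$ for some finite $a$, so $c<\infty$. Because $\eta\neq 0$, every generator in class $\eta$ is a nonconstant Reeb orbit, which has positive action. The constant orbits from $W$ lie in the trivial class. Hence $SH^a_*(W;\eta)=0$ for all $a$ below the minimal period $T_0>0$ of Reeb orbits in class $\eta$, and therefore $c\geq T_0>0$.

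\textbf{Step 2: $c$ is a period in class $\eta$.} We use that $\mathcal{S}(\alpha)$ is closed and nowhere dense; for degenerate spectra this follows by Arzel\`a--Ascoli together with Sard's theorem in the standard way. Suppose, for contradiction, that no Reeb orbit in class $\eta$ has action $c$. The set of such actions is closed, so there is $\epsilon>0$ with no orbit in class $\eta$ of action in $[c-\epsilon,c+\epsilon]$. Then $SH^{(c-\epsilon,c+\epsilon)}_*(W;\eta)=0$, for instance because it equals $HF^{(a_h(c-\epsilon),a_h(c+\epsilon))}(H;\eta)$ via \eqref{eq: iso filt SH to HF}, and this group has no generators. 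By the long exact sequence, $SH^{c-\epsilon}_k(W;\eta)\to SH^{c+\epsilon}_k(W;\eta)$ is then surjective. This forces $\alpha\in \operatorname{Im}(SH^{c-\epsilon}_k)$, contradicting the definition of $c$. We conclude that $c\in\mathcal{S}(\alpha)$ and is realized by an orbit in class $\eta$.

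\textbf{Step 3: nonvanishing in a small window.} Choose $\epsilon$ small. Pick a lift $\tilde\alpha\in SH^{c+\epsilon}_k(W;\eta)$ of $\alpha$, and let $\bar\alpha$ be its image in $SH^{(c-\epsilon,c+\epsilon)}_k(W;\eta)$. If $\bar\alpha=0$, then by exactness $\tilde\alpha$ comes from $SH^{c-\epsilon}_k$, again contradicting the definition of $c$. So $SH^{(c-\epsilon,c+\epsilon)}_k(W;\eta)\neq 0$.

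\textbf{Step 4: localization, the main obstacle.} We want to apply Proposition \ref{corolary: symp hom to local} to get
\[
SH^{(c-\epsilon,c+\epsilon)}_k(W;\eta)=\bigoplus_{\mathcal{A}(\gamma)=c,\ [\gamma]=\eta} SH_k(\gamma),
\]
after which some summand is nonzero and yields the desired $\gamma$.

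The difficulty is that the proposition requires $c$ to be isolated in the spectrum and the orbits of action $c$ to be isolated, which the lemma does not assume. We handle this in two cases.

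\emph{Finitely many orbits.} If only finitely many orbits in class $\eta$ have action near $c$, then $c$ is isolated and the orbits are isolated. The proposition applies directly, with the $\eta$-refinement holding because local homology respects the class.

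\emph{Infinitely many orbits.} Otherwise there are infinitely many (possibly non-isolated) orbits, and the statement must be read with $SH_k(\gamma)$ interpreted as the local homology of an isolated family, as permitted in Section \ref{sec: local SH}. In that case we apply the version of Proposition \ref{prop: ham floer to local} for isolated families. Concretely:
\begin{enumerate}
\item Pass to a semi-admissible $H$ of large slope through \eqref{eq: iso filt SH to HF}.
\item Note that the set of $1$-periodic orbits of $H$ with action $a_h(c)$ is compact.
\item Observe that this set is separated from the other orbits in class $\eta$ by the window $(a_h(c-\epsilon),a_h(c+\epsilon))$.
\end{enumerate}
The Floer homology in this window is then the local Floer homology of that compact family.

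Alternatively, we can approximate $\alpha$ by nondegenerate contact forms with the same spectral invariant up to $\epsilon$, and take limits of the action selectors supplied by (SI1). I expect the bookkeeping of this degenerate case to be the only real work; in the isolated case, which is the one used in the rest of the paper, the lemma follows immediately from Steps 1–3.
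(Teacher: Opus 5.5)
Your argument is correct and is essentially the paper's proof. The paper gets $c_\alpha(W)\in\mathcal{S}(\alpha)$ from spectrality (SI1) for a semi-admissible $H$ rather than from your exact-sequence argument in Step~2, and then argues as in your Steps~3--4 (by contradiction), using Proposition~\ref{corolary: symp hom to local} and the long exact sequence \eqref{eq: long exa seq for symplectic hom}.

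Your isolated case is the only one the lemma is meant to cover: $SH_k(\gamma)$ is defined only for isolated orbits, and the lemma is applied under the standing assumption that there are finitely many Reeb orbits below each action level. You can therefore drop the degenerate-case sketch. As written it would fail anyway, because when $c$ is an accumulation point of the actions in class $\eta$, no window around $c$ separates the action-$c$ orbits from the others.
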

\begin{proof}
First, note that it follows from (SI1) and the properties  of the action function that $c_{\alpha}(W) = (a_h)^{-1}(\mathcal{A}_H(x))$, for $x$ a Hamiltonian orbit of a semi-admissible Hamiltonian $H$ representing the class $\eta$. Since $\eta$ is non-trivial, and $H|_W\equiv 0$, then $x$ corresponds to a Reeb orbit on $N$, so $c_{\alpha}(W)\in \mathcal{S}(\alpha)$.
    
Assume by contradiction that \eqref{eq: symp hom non zero lemma} does not hold for all such Reeb orbits. Then, by Corollary \ref{corolary: symp hom to local}  there exists $\epsilon>0$ sufficiently small so that
$$SH_k^{(c_{\alpha}(W) - \epsilon, c_{\alpha}(W)+\epsilon)}(W;\eta) = \bigoplus_{\substack{\mathcal{A}(\gamma) = c_{\alpha}(W)\\ [\gamma]=\eta}} SH_k(\gamma) = 0.$$
From the long exact sequence for symplectic homology \eqref{eq: long exa seq for symplectic hom}, we obtain
$$ SH_k^{c_{\alpha}(W) - \epsilon}(W;\eta) \rightarrow SH_k^{c_{\alpha}(W) + \epsilon}(W;\eta) \rightarrow SH_k^{(c_{\alpha}(W) - \epsilon,c_{\alpha}(W) + \epsilon)}(W;\eta)=0,$$
which implies the inclusion map $SH_k^{c_{\alpha}(W) -\epsilon}(W;\eta) \rightarrow SH_k^{c_{\alpha}(W)+\epsilon}(W;\eta)$ is surjective. This contradicts the definition of spectral invariant. 
    
\end{proof}
\section{Proof of the main theorems} \label{section: proof of main theorems}
In this section, we present the proofs of Theorems \ref{thm: SDM gene orbits}, \ref{thm: SDM generation on a covering}, and \ref{thm: main_result}, in that order. Although the proof of \ref{thm: SDM gene orbits} is long and technical, we present it first, since it is used in a crucial way in the proof of \ref{thm: main_result}. Readers who wish to avoid technicalities may assume Theorem \ref{thm: SDM gene orbits} and proceed directly to the proof of
Theorem \ref{thm: main_result}.
\subsection{Proof of Theorem \ref{thm: SDM gene orbits}}

In this subsection, we give a detailed proof of Theorem \ref{thm: SDM gene orbits}. The argument follows ideas from \cite{ginzburg2010conley} and \cite{ginzburg2013closed}. For the sake of readability, we divide the proof in a few step. Throughout, all the action intervals are assumed to have end points outside of the action spectrum of the contact forms involved.  

\begin{proof}

\textbf{Step I}. \textit{Local model for SDM:} Let $(N,\alpha)=(\partial W,\lambda|_{\partial W})$ and $\gamma$ be a simple SDM for the Reeb flow of $\alpha$ on $N$ whose action we denote by $c=\mathcal{A}(\gamma)$. With respect to the trivialization of the contact structure along $\gamma$ induced by a choice of trivialization of the
anti-canonical bundle of $W$, as in Subsection \ref{subsection: indexes}, the mean index satisfies
$\Delta=\Delta(\gamma) \in 2\Z$ because $\gamma$ is totally degenerate, and it need not be zero. We consider a trivialization $\tau$ of the contact structure along $\gamma$ with respect to which the mean index is zero. Such a trivialization exists since $\Delta \in 2\Z$. The trivialization $\tau$ along $\gamma$ induces trivializations $\tau^k$ along all iterates $\gamma^k$ by iteration, and also along any orbit, of any contact form supporting $\xi = \text{ker}(\alpha)$, homotopic to these iterates. Consequently, the indices of orbits homotopic to $\gamma^k$ computed with respect to these two trivializations differ by a multiple of $k\Delta$.

In what follows we construct a local model near $\gamma$ with $U = B\times S^1$ a tubular neighborhood, where $B = B_R(0)$ is a ball of small radius $R$ in $\mathbb{R}^{2n-2}$ with coordinates $x=(q,p)$, adapted to the trivialization $\tau$ in the obvious sense, and a $H : B\times S^1 \rightarrow (0,\infty)$ Hamiltonian such that
\begin{enumerate}
\item[(H1)] $\alpha = \lambda_0 + Hdt$, where $\lambda_0 = \frac{1}{2}(qdp - pdq)$ is the standard primitive of the standard symplectic form on $B$,

\item[(H2)] $H_t(0) = c$ is a strict maximum of $H_t$ for all $t \in S^1$.
\end{enumerate}

The construction of such a neighborhood follows from the local model for a simple Reeb orbit, together with Moser's argument. 

More precisely, since $\gamma$ is a simple Reeb orbit, by Lemma \ref{Lemma: local model reeb orbits}, there exists a neighborhood as above with (H2) replaced by $dH_t(0) = 0$ for all $t$. Given that $\gamma$ is a Reeb SDM, then $x(t) \equiv 0$ is an SDM for $H$ with $\Delta_H(x) = 0$. By Proposition \ref{prop: geome characterization of SDM}, there exists a loop of Hamiltonians diffeomorphisms $\eta^t = \eta^t_1$ whose linearization at the origin is the identity, i.e., $d(\eta^t)_0 = I$ for all $t$, such that $\varphi_H^t = \eta^t \circ \varphi_K^t$, where $K$ is a Hamiltonian having strict local maximum at the origin. Let $G$ be a Hamiltonian such that $\eta^t = \varphi_G^t$, which by normalization, we can assume $G_t(0) = 0$. The property $d(\eta^t)_0 = I$ for all $t$ is equivalent to $d^2(G_t)_0 = 0$ for all $t$. From the definition \eqref{eq:composition Hamiltonians} of the composition $H = G\#K$, we have
$$K_t =(H_t  -  G_t) \circ \varphi^t_G .$$
Consider the map $\psi : B\times S^1 \rightarrow B\times 
S^1$ defined by $\psi(x,t) = (\eta^t(x),t)$. Then
$$\psi^*(\alpha) = \lambda_0 + dF+ (K_t + \lambda_0(X_G)(\eta^t))dt,$$
for some function $F$. It is not hard to see that $K_t + \lambda_0(X_{G})(\eta^t)$ has the origin as a strict local maximum. By abuse of notation, we write 
$$\psi^*(\alpha) = \lambda_0 + dF + Hdt,$$
where $H$ has a strict local maximum at the origin. 

Consider the contact form $\alpha = \lambda_0 + Hdt$ and the homotopy $\alpha_s = (1-s)\psi^*(\alpha) + s \alpha$, where $s \in [0,1]$. Note that $\alpha_s$ are contact forms near $\{0\}\times S^1$. Let $Y_s = FR_{\alpha_s}$, where $R_{\alpha_s}$ is the Reeb vector field of $\alpha_s$, and let $\psi_s$ be the flow of $Y_s$. Since
$$\frac{d}{ds} \psi_s^*(\alpha_s) = 0, \text{ for all } s \in [0,1],$$
then $\psi\circ \psi_1$ gives the desired coordinates.

Since $dH_t(0) = 0$ for all $t\in S^1$ and $c>0$, by shrinking $B$ if necessary, we can further assume that
\begin{equation} \label{eq: Hamilt cond so graph is contact}
    H - \lambda_0(X_H)  \neq  0 \text{ in } B\times S^1.
\end{equation}

Now with this local model in hand, we consider functions $H_{+} : B \rightarrow (0,\infty)$ and $H_-:B\times S^1 \rightarrow \R$, $C^0$-close to $H$ such that
$$H_+ \geq H_t \geq H_-, \text{ for all } t \in S^1,$$
where equality holds only at $0 \in B$, and $H_{\pm }\equiv c_{\pm}>0$ on the region $Y = B_R \setminus B_{R/2}$ near the boundary of $B$, with $c_{\pm}$ constants. In particular, $c_+\geq c_-$. Denote by $\rho = ||z||^2/2$ the square norm on $B$ with respect to the chosen trivialization. Then we require from $H_+$ that 

\begin{itemize}
    \item[($\text{H}_+$)] $H_+$ is a decreasing function of $\rho$, constant equal to $c$ round $0$, and equal to $c_+>0$ for $\rho \in [R/2,R]$.
\end{itemize}
The function $H_-$ is taken such that $H_- = G \# F$, where $F$ is a bump function (see \cite[Section 7]{ginzburg2010conley}) on $B$ with respect to a symplectic basis, say $\Xi$ (see Remark \ref{rmk: symplectic basis}), and $G$ satisfies that $d^2G_t(0) = 0$ and $dG_t(0) = 0$ for all $t$, along with $G(0)=0$, and $G\equiv 0$ outside $Y$. Moreover 
\begin{itemize}
    \item[($\text{H}_-1$)] $F$ is a decreasing function of $\rho_{\Xi} = ||z||^2_{\Xi}/2$ with slope $|F'|<\pi$, equal to $c$ at $0$ and equal to $c_->0$ on $Y$, 
    \item[($\text{H}_-2$)] $F$ has a non-degenerate maximum at $0$, and the norm $||d^2F_0||_{\Xi}$ on the symplectic vector space $T_0B$ is sufficiently small; see Remark \ref{rmk: symplectic basis},
    \item[($\text{H}_-3$)] There exists a family of Hamiltonians $G^s$, $s\in [0,1]$ with $G^0 = G$ and $G^1\equiv 0$ satisfying the same properties as $G$ for all $s$, such that  $H_s = G^s \# F \leq H_+$. In particular $H_s \equiv c_-$ on $Y$ for all $s$, and $H_0 = H_-$.
\end{itemize}

We also assume that $H_s$ are all $C^0$-close to $H$. Moreover, both $H_+$ and $H_s$ for all $s$ satisfy condition \eqref{eq: Hamilt cond so graph is contact}. 

There are several other conditions that these bump functions must satisfy (see \cite[Section 7]{ginzburg2010conley}), but they won't be explicitly used here. The construction of $H_{\pm}$ depends on the choice of several parameters, which we will specify later. For now, we only assume that they satisfy the conditions above. Strictly speaking, we replace the bump functions described above by $C^2$-small perturbations (as in \cite{ginzburg2010conley}) and, by abuse of notation, continue to denote them by $H_{+}$ and $F$. 

\textbf{Step 2.} \textit{Symplectic embeddings:} Consider the contact manifold $(U = B\times S^1,\alpha = \lambda_0 + Hdt)$, and its symplectization $(U \times (0,\infty),d(r\alpha))$,  where $r$ is the $(0,\infty)$-coordinate. We can also consider the exact symplectic manifold $(U\times (0,\infty),d\tilde \alpha)$ with $\tilde\alpha=\lambda_0+hdt$, where $h$ denotes the $(0,\infty)$-coordinate. Note that condition \eqref{eq: Hamilt cond so graph is contact} ensures that $\Gamma_H$ is of contact type in $(U\times (0,\infty), d\tilde{\alpha})$. It is not hard to see that there exists a strict contactomorphism between $(U,\alpha)$ and the graph of $H$ on $(U\times (0,\infty),d\tilde{\alpha})$, i.e., $(\Gamma_H,\tilde{\alpha}|_{\Gamma_H})$. 

Moreover, we can identify a neighborhood of the graph $\Gamma_H $ in $(U\times(0,\infty),d\tilde{\alpha})$ with a neighborhood of $\Gamma_H \times \{1\}$ in its symplectization $(\Gamma_H\times (0,\infty), d(s\tilde{\alpha}|_{\Gamma_H}))$, where $s$ the $(0,\infty)$-coordinate. This identification does not preserve the fiber of the projection in the second coordinates $h$ and $s$ of $U\times (0,\infty)$ and $\Gamma_H \times (0,\infty)$, respectively. Nonetheless, by composing this identification with the extension of the strict contactomorphism from $(\Gamma_H,\tilde{\alpha}|_{\Gamma_H})$ to $(U,\alpha)$ along the radial directions $s$ and $r$, of the symplectization $(\Gamma_H\times (0,\infty), d(s\tilde{\alpha}|_{\Gamma_H}))$ and $(U\times (0,\infty),d(r\alpha))$, respectively, we obtain a symplectomorphism between a neighborhood of $\Gamma_H$ in $(U\times (0,\infty), d\tilde{\alpha})$ and neighborhood of $U\times \{1\}$ in the symplectization of $(U,\alpha)$. In particular, this symplectomorphism sends hypersurfaces of contact type given by the graphs of Hamiltonians in $U$ that are $C^0$-close to $H$ and satisfy condition \eqref{eq: Hamilt cond so graph is contact} to hypersurfaces of contact type in the symplectization of $(U,\alpha)$.

Consider $U_+$ and $U_s$ the images of the graphs of $H_{+}$ and $H_s$ under the symplectomorphism, respectively. Recall we assumed $H_+$  and $H_s$ to be $C^0$-close to $H$ satisfying the same condition \eqref{eq: Hamilt cond so graph is contact}, hence the graphs $\Gamma_{H_+}$ and $\Gamma_{H_s}$ are hypersurfaces of contact type in $(U\times(0,\infty),d\tilde \alpha)$. Therefore, the graphs $\Gamma_{H_+}$ and $\Gamma_{H_s}$ are also of contact type in the symplectization of $(U,\alpha)$. Moreover $U_+$ lies, in the obvious sense, above $U$, and $U_s$ and below $U_+$ (here we are identifying $U$ with $U\times \{1\}$ in its symplectization). Observe that $U_-$ is the image of the graph $\Gamma_{H_-}$ and lies below $U$. The symplectization of $(U,\alpha)$ embeds in the completion $\widehat{W}$ of $W$, and therefore so do $U_{\pm}$ as hypersurfaces of restricted contact type. From conditions ($\text{H}_+$) and $(\text{H}_-3)$, the hypersurfaces $U_{+}$ and $U_s$ extend to closed contact hypersurfaces $N_+$ and $N_s$ of restricted contact type in $\widehat{W}$, with $N_+$ lying above $N$ and $N_s$, and $N_-$ lying below $N$. Hence, the pull back of $r\alpha$ to $N_+$ and $N_s$ is give by $\alpha_+ = f_+ \alpha$ and $\alpha_s = f_s\alpha$, where $f_+ \geq1 $ and $f_s\leq f_+$ and $f_- \leq1$. The extensions themselves are immaterial for our purposes. Furthermore, by construction, the regions $W_{+}$ and $W_s$ below $N_+$ and $N_s$ respectively are Liouville fillings and have the same topological homotopy type as $W$. The filings $W_{+}$, $W$, and $W_s$ intersect along $\gamma$, but this intersection can be eliminated by a small perturbation, at the cost of having an arbitrarily small change in the action filtration in symplectic homology; we may assume such a perturbation has been performed whenever necessary. In particular, after this perturbation $f+>1$, $f_s \leq f_+$ and $f_-<1$.

To summarize, we have constructed embeddings $N_+$,$N_s$ and $N_-$ in $\widehat{W}$, with $N_+$ lying above $N$ and $N_s$ below $N_+$, while $N_- = N_0$ lies below $N$. All these contact manifolds are of restricted contact type, with fillings $W_+$, $W_s$, and $W_-$, respectively. The loops $\gamma$ are a Reeb orbit for all the contact manifolds, and in the neighborhood $U = B\times S^1$ of $\gamma$, the forms are given by $\alpha_+ = \lambda_0 + H_+dt$, $\alpha_s = \lambda_0 + H_sdt$, and $\alpha_- = \lambda_0 + H_-dt$, respectively. 

As pointed out above, the Liouville domains $W_- \subset W  \subset W_+$ have the same homotopy type. In particular, the free homotopy class of the loops $\gamma^k$ in each of these domains can be identified. We denote this common class by $\mathfrak{f}^k$.

\textbf{Step 3.} \textit{Direct sum decomposition of symplectic homology:} We now focus on the class of Liouville domains $(W,\omega = d\lambda)$ such that the boundary $(N,\alpha)$  has an open subset $U$ that can be identified with $(B \times S^1, \lambda_0 + \tilde{H}dt)$, where $B = B_R(0)$, and $\tilde{H} : B\times S^1 \rightarrow (0,\infty)$ is a Hamiltonian with $\tilde{H}_t(0) = c$ a strict maximum for all $t\in S^1$, constant in $Y \times S^1$, where $Y = B_R \setminus B_{R/2}$ with $a:=\tilde{H}|_{Y\times S^1}$. Notice that $\gamma(t) = (0,ct)$ is a periodic Reeb orbit with action $c= \mathcal{A}(\gamma) $. Recall from \eqref{eq: Reeb vf in nbh} that Reeb vector field $R_{\alpha}$  of $\alpha$ in $U$ is given by 
$$R_{\alpha} = \frac{1}{\tilde{H} - \lambda_0(X_{\tilde{H}})}(\partial_t -X_{\tilde{H}}).$$
Hence, there is a one-to-one correspondence between the contractible $k$-periodic orbits of $-\tilde{H}$ in $B$, and the Reeb orbits in $U$ homotopic to $\gamma^k$. Moreover, the Hamiltonian actions correspond to the actions (period) of the associated Reeb orbits. 

Let $k\in \mathbb{N}$, $K$ be a semi-admissible Hamiltonian with $\text{slope}(K)>kc$, such that $r_{\max}(K)$ is arbitrarily close to $1$, and $a_K(kc)$ arbitrarily close to $kc$; see \eqref{eq: bilipchitz}, \eqref{eq:inequality action function} and \eqref{eq: A_h(t)convergetot}, and $I$ a interval around $kc$ sufficiently small. Consider the splitting of the Floer complex as modules
 \begin{equation} \label{eq: decomp in chain level}
     \text{CF}^I_*(K) = \text{CF}^I_*(K,U) \oplus \text{CF}^I_*(K;N\setminus U),
 \end{equation}
 where the first summand is generated by the one periodic orbits of $K$ associated to Reeb orbits in $U$ with action in $I$, and the second summand is spanned by all the remaining associated Reeb orbits with action in $I$ (since $kc>0$ and $I$ is sufficiently small, we do not need to consider orbits of $K$ coming from the filling $W$ of $(N,\alpha)$). 
 
\begin{lemma}[cf. \cite{ginzburg2010conley}, Lemma 7.1] \label{lemma: decomposition independe of k proof theorem}
There exists a constant $\epsilon_U>0$, depending only on $U$, such that \eqref{eq: decomp in chain level} yields a decomposition of chain complexes and induces a decomposition of symplectic homology 
\begin{equation} \label{eq:Ham decomp theorem}
    \text{SH}^I_*(W) = \text{HF}^I_*(K,U) \oplus \text{HF}^I_*(K, N \setminus U),
\end{equation}
for any Hamiltonian $K$ as described above (depending on $k$), and $I$ any action window around $kc$ with end points outside of the spectrum $\mathcal{S}(\alpha)$, not containing the points of $\{na \mid n\in \bb{N} \}$, with length $|I|< \epsilon_{U}$. Moreover, the decomposition \eqref{eq: new Ham decomp theorem} is independent of the filling $W$, and depends only on the boundary $(N,\alpha)$.
\end{lemma}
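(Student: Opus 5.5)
The approach follows \cite[Lemma~7.1]{ginzburg2010conley}: show that no Floer trajectory of $K$ with action drop less than $\epsilon_U$ can connect an orbit in $U$ with an orbit outside $U$. Then the differential of $\text{CF}^I_*(K)$ is block diagonal with respect to \eqref{eq: decomp in chain level}, and the decomposition passes to homology.

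\textbf{Step 1: isolate the orbits in $U$.} Since $\tilde H$ is constant equal to $a$ on $Y\times S^1$, the Reeb vector field there is $R_\alpha = a^{-1}\partial_t$. Thus every closed Reeb orbit meeting $Y\times S^1$ is a circle $\{z\}\times S^1$, or an iterate of one, with action $na$. The hypothesis that $I$ contains no point of $\{na \mid n\in\bb{N}\}$ therefore guarantees that every generator of $\text{CF}^I_*(K)$ lies either in the core $B_{R/2}\times S^1\times(1,r_{\max})$ or outside $U\times(1,r_{\max})$. Choosing $r_{\max}(K)$ close to $1$ keeps all these orbits inside a thin shell, so the two families of generators are separated by the region $Y\times S^1\times[1,r_{\max}]$.

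\textbf{Step 2: energy lower bound.} This is the main obstacle. I would establish a Gromov--Floer monotonicity estimate: any Floer cylinder $u$ for $(K,J)$ that crosses the shell region $Y\times S^1\times[1,r_{\max}]$ from the inner boundary to the outer boundary has energy at least some $\epsilon_U>0$. The crucial requirement is that $\epsilon_U$ be independent of $k$ and of $K$.

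To achieve this, I would choose $J$ cylindrical and $t$-independent near the shell. On the shell, $X_K = k'(r)R_\alpha = k'(r)a^{-1}\partial_t$, which has no component in the $B$-directions. Projecting $u$ to $Y$, the $Y$-component of $u$ then satisfies an honest $J$-holomorphic equation for the split structure on $Y\times(\text{the } (t,r)\text{-cylinder})$.

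The monotonicity lemma for holomorphic curves crossing an annulus of fixed width $R/2$ in $\R^{2n-2}$ would then give a lower bound on the symplectic area of order $\epsilon_U \sim cR^2$. This area in turn bounds the Floer energy, which equals the action difference. One has to check that the contributions of the Hamiltonian term vanish here, which is exactly where constancy of $\tilde H$ on $Y$ is used. This is the step I would verify most carefully, since the convexity of $k$ and the $r$-dependence of the symplectic form $d(r\alpha)$ must not spoil the bound. Using that $r\in[1,r_{\max}]$ with $r_{\max}$ close to $1$ gives uniform constants.

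\textbf{Step 3: conclude.} If $|I|<\epsilon_U$, every differential, and every continuation map between two admissible choices of $K$, has energy less than $\epsilon_U$. Hence none of these maps can mix the two summands. This shows that \eqref{eq: decomp in chain level} is a splitting of chain complexes compatible with continuation. Combining this with \eqref{eq: iso filt SH to HF}, applied with $a_K(I)\approx I$, yields \eqref{eq:Ham decomp theorem}.

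Independence of the filling follows because, for positive action windows near $kc$, all generators and, by the no-escape lemma, all relevant trajectories lie in the cylindrical end $N\times[1,\infty)$. There only $(N,\alpha)$ enters the construction.
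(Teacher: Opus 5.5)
Your Step 2 is the paper's key estimate. Over $Y$ the vector field $X_K$ is parallel to $\partial_t$, so the projection of a Floer cylinder to $B$ is holomorphic there, and crossing the annulus costs an energy $\tilde\epsilon_U$ independent of $k$.

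\textbf{The gap.} It is in your last paragraph, and Steps 1--3 silently rely on it: the no-escape lemma does \emph{not} confine trajectories to $N\times[1,\infty)$. For cylindrical $J$ and $K=h(r)$ with $h''\ge 0$, the function $r\circ u$ obeys only a \emph{maximum} principle. This bounds $r\circ u$ from above, which keeps curves below $r_{\max}$, but it gives no lower bound. Curves whose ends lie in the cylindrical region can pass through the filling; think of a complex line through the origin in $\mathbb{C}^n$, on which $r$ has an interior minimum. Two consequences follow.
\begin{enumerate}
\item A cylinder from an orbit in $B_{R/2}\times S^1\times(1,r_{\max})$ to an orbit outside $U$ may dive into $W$ and resurface outside $U$ without ever crossing $Y\times S^1\times[1,r_{\max}]$. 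Your shell estimate does not rule this out, so the block-diagonal claim is unproved.
\item For the same reason, your argument that the decomposition is independent of the filling has no basis.
\end{enumerate}

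\textbf{The missing ingredient.} The paper uses a second monotonicity bound, and it enters the definition of $\epsilon_U$. Fix a collar $Z\cong N\times[1/2,1]$ of $N$ in $W$ with $\lambda=r\alpha$. Since $K$ is semi-admissible, $K\equiv 0$ on $W$, so on $Z$ Floer's equation is just the $J$-holomorphic equation. A cylinder whose asymptotic orbits both lie at $r\ge 1$ and which reaches $N\times\{1/2\}$ must cross the whole collar. It therefore has energy at least some $C=C(J)>0$, independent of $k$ and of $K$.

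Now take $\epsilon_U<\min\{\tilde\epsilon_U,C\}$. Every trajectory with action drop less than $|I|$ then stays in $N\times[1/2,r_{\max}]$, which is a piece of the symplectization of $(N,\alpha)$. In that region $Y\times S^1\times[1/2,r_{\max}]$ genuinely separates the two families of generators. Your projection argument applies there, provided $J$ is chosen as in your Step 2 on the collar as well. Both the splitting and the independence of $W$ then follow.
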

The key point of Lemma \ref{lemma: decomposition independe of k proof theorem} is that the constant $\epsilon_U$ is independent of $k$. In particular, the length of the action window $I$ can be taken uniform in $k$, which is essential for the following arguments.
\begin{proof}[Proof of Lemma \ref{lemma: decomposition independe of k proof theorem}] Let $K$ be a Hamiltonian as above, with $K=e(r)$ on $M \times [1,\infty)$. The Hamiltonian vector field of $K$ in $U\times[1,\infty)$ is given by 
$$X_K  = e'(r)R_\alpha =  \frac{e'(r)}{\tilde{H} - \lambda_0(X_{\tilde{H}})}(\partial_t - X_{\tilde{H}}).$$
Since $\tilde{H}$ is constant in $Y\times S^1$, it follows that $X_K$ is in the direction of $\partial_t$ in $Y\times S^1 \times [1,\infty) $. Fix an admissible almost complex structure $J \in \mathcal{J}(W)$. Let $u$ be any Floer cylinder for $K$ asymptotic to a Hamiltonian orbit coming from a Reeb orbit in $U$ in one end, and to a Hamiltonian orbit associated to a Reeb orbit that is contained in $M\setminus U$ at the other end (note that we only need to study this type of Floer cylinders since the possibility $u$ approaching a orbit in the shell $Y$ has been ruled out by assuming $\{nc \mid n\in \bb{N}\}$ does not intersect $I$). 
Then the projection $\pi(u)$, where $\pi: B\times S^1 \times  [1,\infty) \rightarrow B$ is a holomorphic map whose image crosses the shell $Y$. Crossing this shell requires some minimal energy $\tilde{\epsilon}_U$, depending only on $U$ (and not on $k$).

Now we claim that if the energy of the Floer cylinder $u$ is sufficiently small, then its image is contained in $N\times[1/2,\infty)$. Indeed, consider a collar neighborhood $Z$ of $N$ in $W$ such that $
(Z,\lambda|_Z) \cong (N \times [1/2,1],r\alpha)$. Under this identification, we view $N\times \{1/2\}$ as a submanifold of $W$. Since $K$ is an admissible Hamiltonian, in particular $K|_W = 0$. Hence, inside the collar neighborhood $N \times [1/2,1]$, the Floer cylinder satisfying \eqref{eq: floer eq} reduces to $J$ holomorphic curves. Thus, since both ends of $u$ lie in $N\times [1,\infty)$, if $u$ crosses the hypersurface $N\times \{1/2\}$, then it crosses the shell $N\times [1/2,1]$, and its energy is bounded from below by a constant $C = C(J)$. 

Now fix $\epsilon_U < \min\{ \tilde{\epsilon}_U,C\}$. Consider a $C^{\infty}$-small non-degenerate perturbation $\tilde{K}$ of $K$, and a $C^{\infty}$-small, compactly supported, generic periodic perturbation $\tilde{J}$ of $J$. Each $1$-periodic orbits of $K$ considered in \eqref{eq: decomp in chain level} split into finitely many non-degenerate $1$-periodic orbits of $\tilde{K}$ contained in a neighborhood of $B_{R/2} \times S^1 \times[1,\infty)$, and of $(N \setminus U)\times [1,\infty)$, respectively. It then follows from a suitable Gromov compactness theorem (see, e.g., \cite{fish2011target}) that any Floer cylinder $\tilde{u}$ of $\tilde{K}$ asymptotic to such perturbed orbits that either crosses the hypersurface $N\times \{1/2\}$ or that the projection $\pi(\tilde{u})$ crosses the shell $Y$ has energy bounded by below by $\epsilon_U$.

Hence, for action windows $|I|<\epsilon_U$ (independently of $k$), the decomposition \eqref{eq: decomp in chain level} is preserved by the Floer differential independently of $W$.    
\end{proof}

Note that the groups $HF^I_*(K,U)$ do not depend on the choice of $K$, provided that $K$ grows sufficiently fast (as required in the Lemma \ref{lemma: decomposition independe of k proof theorem}). In light if Lemma \ref{lemma: decomposition independe of k proof theorem}, we define $$SH^I_*(U,\lambda):=\lim HF^I_*(K,U) \text{  and  } SH^I_*(N \setminus U,\lambda):=\lim HF^I_*(K,N \setminus U),$$
where the limits are taken in the class of semi-admissible Hamiltonians. By taking the limit in \eqref{eq:Ham decomp theorem}, we obtain 
\begin{equation} \label{eq: new Ham decomp theorem}
    SH_*^I(W) = SH^I_*(U,\lambda) \oplus SH^I_*(N\setminus  U,\lambda).
\end{equation}
Moreover, in Lemma \ref{lemma: decomposition independe of k proof theorem}, the homology groups appearing on the decomposition may be restricted to a free homotopy class. In particular, \eqref{eq: new Ham decomp theorem} we have that
\begin{equation} \label{eq: decomposition replace Ham to symp}
    SH_*^I(W,\mathfrak{f}^k) = SH_*^I(U,\lambda,\mathfrak{f}^k)  \oplus SH^I_*(N\setminus  U,\lambda,\mathfrak{f}^k).
\end{equation}
Lastly, the decomposition \eqref{eq:Ham decomp theorem} is preserved  by the exact sequence \eqref{eq: long exa seq for Hamiltonian}, yielding, for convenient choices of $\epsilon>0,\delta>0$, the exact sequence
\begin{equation} \label{eq:exact seq bump function}
    SH_{*+1}^{(kc+\delta,kc+\epsilon)}(U,\lambda,\mathfrak{f}^k) \rightarrow SH_{*}^{(kc-\delta,kc+\delta)}(U,\lambda,\mathfrak{f}^k) \rightarrow SH_{*}^{(kc-\delta,kc+\epsilon)}(U,\lambda,\mathfrak{f}^k).
\end{equation} 
\begin{remark} \label{remark: remark lemma stays true with continuation}
By an argument similar to that used in the proof of Lemma \ref{lemma: decomposition independe of k proof theorem}, it follows that for any homotopy $\alpha_s$, $s \in [0,1]$ of contact forms, where each $\alpha_s$ belongs to the class described above, i.e., in the neighborhood $U = B\times S^1$, $\alpha_s$ can be written as $\lambda_0 + \tilde{H}^sdt$, where $\tilde{H}^s$ have strict local maximums at the $\{0\}\times S^1$ and are constant in $Y$, and any homotopy of Hamiltonians $K_s$, $s \in [0,1]$, each satisfying the assumption of Lemma \ref{lemma: decomposition independe of k proof theorem} for the respective $\alpha_s$, the continuation map $SH_*^I(U,\alpha_0) \rightarrow SH_*^I(U,\alpha_1)$ is an isomorphism for any $|I|<\epsilon_U$ satisfying the conditions in Lemma \ref{lemma: decomposition independe of k proof theorem} for all $\alpha_s$. 
\end{remark}

\textbf{Step 4.} \textit{Homological computations:} We now focus on computing $SH_*^{I}(U,\lambda,\mathfrak{f}^k)$, where $\tilde{H}$ is a $C^2$-small perturbation of a bump function; see \cite[Section 7]{ginzburg2010conley}. 

From the analysis of contractible $k$-periodic orbits of a $C^2$-small perturbation of a bump function $\tilde{H}$ in \cite{ginzburg2010conley}, together with the correspondence between Reeb orbits of $\lambda$ in $U$ and Hamiltonian orbits of $\tilde{H}$ in $B$, it follows that the $k$-periodic Reeb orbits in $U$, homotopic in $U$ to $\gamma^k$, with Conley-Zehnder indices $n-2, n -1 $ and $n$ (recall $B \subset \R^{2n-2}$), computed with respect to $\tau^k$, whose actions are in the interval $(kc -\delta,kc+ \epsilon)$, with 
$$\pi r^2<\epsilon, \text{ and }0<\delta \ll \epsilon,$$
where $r$ in the radius of the ball where $\tilde{H} \equiv \max(\tilde{H})$, and $\delta>0$ is sufficiently small (independent of $k$)
are described as follows:
\begin{itemize}
    \item There are no Reeb orbits with index $n-2$ and action in the range $(kc - \delta,kc+\epsilon)$,
    \item $\gamma^k$ is the only Reeb orbit of index $n-1$ with action in $(kc - \delta,kc + \delta)$,
    \item There exists a unique Reeb orbit $x$ of index $n$, with action in the range $(kc+\delta,kc+ \epsilon)$,
    
\end{itemize}
All the orbits listed above are non-degenerate Reeb orbits. Therefore, in order to compute $SH_{*}^{(kc - \delta,kc+ \delta)}(U,\lambda,\mathfrak{f}^k)$, we consider $K$ Hamiltonian as in Lemma \ref{lemma: decomposition independe of k proof theorem}, and perform a Morse-bott perturbation of the corresponding Hamiltonian orbits of $K$ of index $n-2,n-1$ and $n$. We denote these orbits by $y$ and $z$, corresponding to $\gamma^k$ and $x$, respectively. Under such perturbation, each orbits splits into two non-degenerate Hamiltonian orbits, denoted by $y_m , y_M$ and $z_m,z_M$, corresponding to the minimum and and maximum of a perfect Morse function defined on the circle of fixed points of the Hamiltonian diffeomorphism associated with $y$ and $z$, whose actions are arbitrarily close to the original ones, with Conley-Zehnder indices given by $\mu_{cz}(y_m) = n-1, \mu_{cz}(y_M) = n$ and $\mu_{cz}(z_m) = n, \mu_{cz}(z_M) = n+1$. Hence, it follows that 

\begin{equation} \label{eq: hom deg n z2}
    SH_{k\Delta + n}^{(kc-\delta, kc+\delta)}(U,\lambda, \mathfrak{f}^k) = \Z_2,
\end{equation}
generate by $y_M$ (recall that symplectic homology is computed with respect to the trivialization along orbits induced by the fixed trivialization of the anti-canonical bundle). The indices above, computed with respect to $\tau^k$, differ from those by $k\Delta$). Moreover, by deforming $\tilde{H}$ to a Hamiltonian with no periodic orbits of index $n-1$ and $n$, and applying Remark \ref{remark: remark lemma stays true with continuation}, a similar analysis of the orbits obtained from the Morse-Bott perturbation as above shows that 
\begin{equation} \label{eq: hom degre n zero}
    SH_{k\Delta + n}^{(kc-\delta,kc+\epsilon)}(U,\lambda,\mathfrak{f}^k)=0.
\end{equation}
Therefore, from the exact sequence \eqref{eq:exact seq bump function}, and \eqref{eq: hom deg n z2} and \eqref{eq: hom degre n zero}, we conclude that the map
\begin{equation} \label{eq:map surjetive}
    SH_{k\Delta + n+1}^{(kc+\delta,kc+\epsilon)}(U,\lambda,\mathfrak{f}^k) \rightarrow SH_{k\Delta + n}^{(kc-\delta,kc+\delta)}(U,\lambda,\mathfrak{f}^k) = \Z_2,
\end{equation}
is surjective. We now consider two forms $\lambda_0$ and $\lambda_1$ in $N$ in the above class, such that $\lambda_{i} = \alpha + H_{i}dt$ on $U$ with $H_1 \geq \tilde{H}\geq H_0$, and $\lambda_i = f_i \lambda$ outside of $U$, for $i=0,1$, where $f_1 \geq f_0$. As in step 2, we view $(M_i,\lambda_i)$ as contact submanifolds of restricted contact type in $\widehat{W}$, boundary of Liouville domains $W_{\pm}$, with $W_0 \subset W \subset W_1$. By the naturality of the transfer map, decomposition \eqref{eq:Ham decomp theorem} is preserved. Let $K_i$ semi-admissible Hamiltonians in $\widehat{W_i}$ (for the respective radial coordinates) be chosen suitably in the sense of Lemma \ref{lemma: decomposition independe of k proof theorem}. Then, from Remark \ref{remark: remark lemma stays true with continuation}, we conclude that
\begin{equation} \label{eq: iso to Z_2 proof therem}
    \Z_2 =  SH_{k\Delta + n}^{(kc-\delta,kc+\delta)}(U,\lambda_1,\mathfrak{f}^k) \rightarrow SH_{k\Delta + n}^{(kc-\delta,kc+\delta)}(U,\lambda_0,\mathfrak{f}^k) = \Z_2,
\end{equation}
is an isomorphism. 

\textbf{Step 5:} \textit{Final step.} In order to prove \eqref{eq:sym hom non zero}, and finish the proof of Theorem \ref{thm: SDM gene orbits}, it suffices to show that the transfer map
\begin{equation} \label{eq: map cant be zero}
    SH_{k\Delta + n+1}^{(kc +\delta_k,kc+\epsilon)}(W_{+} , \mathfrak{f}^k) \longrightarrow SH_{k\Delta + n+1}^{(kc + \delta_k,kc+\epsilon}) (W_{-},\mathfrak{f}^k),
\end{equation}
is nonzero.

Before proving that \eqref{eq: map cant be zero} is nonzero, we clarify how the various choices on the construction of $H_{\pm}$ depend on one another. 

The constant $c_{+}$ depends only on $U$ and $H$, and is fixed first. The parameter $\epsilon>0$, as well as the lower bound $k_0$ for $k$, depend on $U$, $H$, and the chosen constant $c_{+}$. Then, we choose the Hamiltonian $H_+$, which depends on $H$, $U$, $\epsilon$, and $c_+$. Lastly, for each $k>k_0$, we choose $H_-$, and therefore the constant $c_-$. The manner in which $\lambda_{\pm}$ are extended outside of $U$ is immaterial for our purposes. 

We now describe the choices of the constants in more detail. First, we pick $c_{+}$, with $c_+<c$ relatively close to $c$, e.g., $|c-c_+|<c/2$. Its is convenient to choose $c_{+} \in \bb{Q}c$, hence $c_{+} = p_{+}c/q_{+}$, for some integers $p_{+}$ and $q_{+}$.

Now we choose $\epsilon>0$ so that $\epsilon<\epsilon_U$ and is small relative to $|c-c_{+}|$, e.g., $\epsilon <  |c - c_+|/2,$ and lastly, $\epsilon< c/\max{\{q_-,q_+\}}$, so that the for all sufficiently large $k$, the intervals $I=(kc-\epsilon,kc + \epsilon)$ does no intersect the set $\{nc_{+}\mid n\in \Z \}$. 

We then assume $k$ is large enough so that $k|c-c_+| > \pi R^2$. We choose $H_+$ meeting the conditions ($\text{H}_+$) and \ref{eq: Hamilt cond so graph is contact}, and such that $\pi r^2 < \epsilon$, where $r$ is the radius of the ball where $H_+ \equiv \max{H_+}$. Note that this function is chosen independently of $k$. 

Lastly, for each $k$ sufficiently large, we choose $H_-$ as follows. By Proposition \ref{prop: geome characterization of SDM}, consider a sufficiently large $i$ and a loop of Hamiltonians $\eta_i = \eta$ such that
\begin{itemize}
    \item $\eta^t = \eta^t_i$, $t \in S^1$, is a loop of Hamiltonian diffeomorphisms fixing $0$,
    \item There exists a symplectic basis $\Xi = \Xi^i$ of $\R^{2n-2}$,
\end{itemize}
for which the Hamiltonian $K = K^i$, generating the flow of $(\eta^t)^{-1} \circ \varphi_H^t$ has a strict local maximum at $0$ and satisfies
\begin{equation} \label{eq: K function choice of F}
    \max k||d^2(K_t)_x||_{\Xi}<\pi,
\end{equation}
for all $x$ in a small neighborhood $V$ of the origin with $V \subset B_r$. Recall the ball was fixed with respect to the standard metric in $R^{2n-2}$, and is unrelated to the basis $\Xi$. Moreover, the loop $\eta$ has identity linearization at the origin, i.e., $d\eta^t_0 = I$, for all $t \in S^1$, and is contractible to the constant loop equal to the identity map in the
class of loops with identity linearization at $0$. Let $\eta_s$ be a homotopy from the loop $\eta$ to the constant loop equal to the identity such that $d\eta^t_s \equiv I$, for all $s,t$, and a family of one-periodic Hamiltonians $G^s_t$ generating $\eta_s^t$, normalized by $G^s_t(0) = 0$.  The condition $(d \eta^t_s)_0 = 0$ is equivalent to $d^2(G_t^s)_0 = 0$. 

Consider a bump function $F$ with respect to the basis $\Xi$. We only require $F$ to be constant outside of $V$ rather than compactly supported in $V$, such that
\begin{itemize}
    \item $F(0) = c$, and $F\leq K$, and by \eqref{eq: K function choice of F},
    \begin{equation} \label{eq: conley zehnd index is n-1}
        k||d^2F_0||_\Xi < \pi.
    \end{equation}
\end{itemize}
Furthermore, by choosing $\min F$ sufficiently small and using the condition $d^2(G^s_t)_0$, we can ensure that 
$$F^s:= G^s\#F \leq H_+, $$
for all $s$. Notice that $F^s$ does not need to be a bump function with respect to the standard coordinates of $B$. $F^s$ is a homotopy from $H_- := G^0 \#F \leq G^0\#K = H \leq H_+$, with $F^s(p) = c$, and $F^s \leq H_+$ for all $s$. We assume that $G^s_t \equiv0$ outside of a sufficiently small neighborhood of $0$. Consequently, the Hamiltonians $F^s$ are constant equal to $c_-=\min F$ on $Y$. We assume that $c_-$ satisfies similar conditions as $c_+$, so that Lemma \ref{lemma: decomposition independe of k proof theorem} applies. Since the change of basis from $\Xi$ to the canonical symplectic basis of $\R^{2n-2}$ is a symplectic linear map (in particular a linear map), then we make these choices of Hamiltonians so that condition \eqref{eq: Hamilt cond so graph is contact} holds. It follows from \eqref{eq: conley zehnd index is n-1} that the Conley-Zehnder index of $\gamma^k$ for $\lambda_{H_-}$ with respect to $\tau^k$ is $n-1$. 

Lastly, we take $\delta_k$ sufficiently small depending on $V$ (in particular $\delta_k$ may decrease with $k$). For more details about these choices, see \cite{ginzburg2010conley,ginzburg2009action, ginzburg2013closed}.

Now, back to \eqref{eq: map cant be zero}, consider the commutative diagram 
\begin{equation} \label{diag: diagram lambda to lambdaG}
    \begin{tikzcd}[row sep=large, column sep=large]
SH_*^{I}(U,\lambda{+},\mathfrak{f}^k)
\arrow[dr]
\arrow[d]
&
\\
SH_*^{I}(U,\lambda{-},\mathfrak{f}^k)
\arrow[r,"\cong"]
&
SH_*^{I}(U,\lambda_{F},\mathfrak{f}^k)
,
\end{tikzcd}
\end{equation}
where $\lambda_F = \lambda_0 +Fdt$, and $|I|<\epsilon_U$. From Remark \ref{remark: remark lemma stays true with continuation}, together with property ($\text{H}_-3$)) of $H_-$, the horizontal arrow in \eqref{diag: diagram lambda to lambdaG} is an isomorphism. By \eqref{eq: iso to Z_2 proof therem}, taking $I = (kc-\delta,kc+\delta)$ and degree $*=k\Delta + n$, the diagonal arrow in \eqref{diag: diagram lambda to lambdaG} is an isomorphism.
Hence, we have that 
\begin{equation} \label{eq: iso to Z_2 proof therem new}
    \Z_2 =  SH_{k\Delta + n}^{(kc-\delta,kc+\delta)}(U,\lambda_+,\mathfrak{f}^k) \rightarrow SH_{k\Delta + n}^{(kc-\delta,kc+\delta)}(U,\lambda_-,\mathfrak{f}^k) = \Z_2,
\end{equation}
is an isomorphism. Combining the observations above with \eqref{eq:map surjetive}, and using that the transfer map preserves the corresponding summand in the decomposition \eqref{eq: decomposition replace Ham to symp}, we conclude that the composition
\begin{equation} \label{eq: map cant be zero deg n}
   SH_{k\Delta + n+1}^{(kc +\delta_k,kc+\epsilon)}(W_{+} , \mathfrak{f}^k) \longrightarrow  SH_{k\Delta + n}^{(kc -\delta_k,kc+\delta)}(W_{+} , \mathfrak{f}^k) \longrightarrow SH_{k\Delta + n}^{(kc - \delta_k,kc+\delta}) (W_{-},\mathfrak{f}^k),
\end{equation}
where the left map is the connecting homomorphism in the exact sequence \eqref{eq:exact seq bump function}, and the right map is the transfer map,
is a nonzero map.

Therefore, by the naturality of the transfer map, the following diagram commutes

\[
\begin{tikzcd}
SH_{k\Delta + n+1}^{(kc +\delta_k,kc + \epsilon)}(W_+,\mathfrak{f}^k) \arrow[r] \arrow[d] & SH_{k\Delta + n}^{(kc -\delta_k,kc + \delta)}(W_+,\mathfrak{f}^k) \arrow[d] \\
SH_{k\Delta + n+1}^{(kc +\delta_k,kc + \epsilon)}(W_-,\mathfrak{f}^k) \arrow[r] & SH_{k\Delta + n}^{(kc -\delta_k,kc + \delta)}(W_-,\mathfrak{f}^k)
\end{tikzcd}
\]
and we conclude \eqref{eq: map cant be zero} is nonzero. This completes the proof.

\end{proof}

\subsection{Proof of Theorem \ref{thm: SDM generation on a covering}}
\begin{proof} We assume throughout the proof that all the Reeb orbits are isolated, or equivalently, that for any $L>0$, the set of Reeb orbits with action less than $L$ is finite, because otherwise \eqref{eq:growth in corollary} holds automatically. 

Assume $\gamma = z^m$, where $z$ is a simple Reeb orbit and $m\geq 1$. Consider the homology class $[z] = a\zeta \in H_1(W;\Z)/\text{Tor}$, where $\zeta$ is a primitive class in $H_1(W;\Z)/\text{Tor}$, $a\in\mathbb{\Z}$, and set $l:=ma$.  Since $\zeta$ is a primitive element of the free $\Z$-module $H_1(W;\Z)/\text{Tor}$, there is a decomposition
\begin{equation} \label{eq: dec modules}
    H_1(W;\Z)/\text{Tor} = \Z\langle\zeta \rangle \oplus A,
\end{equation} 
where $A$ is a free $\Z$-module. Consider the map $\pi_1(W) \rightarrow \Z\langle\zeta\rangle/ l\Z \langle \zeta\rangle$, given by the composition of the Hurewicz map with the projection along the first factor of \eqref{eq: dec modules}. The kernel of this map contains the homotopy class of $\gamma$, has index $l$ in $\pi_1(W)$ and determines a $l$-sheeted covering space $\Phi : W' \rightarrow W$. Let $\lambda' = \Phi^{*}(\lambda)$, so $(W',\lambda')$ is a Liouville domain whose boundary is a contact manifold $(\Sigma',\alpha')$. Note that $\Phi$ restricts to a map between the boundaries satisfying $\alpha' = \Phi^*(\alpha)$.

Choose $\gamma'$ a lift of $\gamma$ to $W'$. This is a simple closed curve representing a primitive homology class $\eta'\in H_1(W';\Z)/\text{Tor}$. Therefore, $\gamma'$ is a simple Reeb orbit. We will verify that $\gamma'$ is also a Reeb SDM for $(N',\alpha')$. Consider the local model of $z$ given by Lemma \ref{Lemma: local model reeb orbits}, where $U= B\times \R/c\Z$ with $\alpha = \lambda_0 + Hdt$, $H :B\times \R/c\Z  \rightarrow \R$, and $c = \mathcal{A}(z)$. Because the orbit $\gamma = z^m$ is a Reeb SDM, and $\varphi_{-H}$ is the return map of the simple Reeb orbit $z$, by Definition \ref{def: SDM}, the origin of $B$, seen as a constant orbit and denoted by $x$, is an SDM for $\varphi_{-H^{(m)}}= \varphi^m_{-H}$. This is the time one return map of $\gamma$, where $H^{(m)}(w,t) = mH(w,mt)$ for $(w,t)\in B\times S^1$. Moreover, by Remark \ref{rmk: iterate of a SDM is a SDM}, since $z$ and all its iterates are isolated Reeb orbits, $x^a$ is an SDM for $\varphi_{-H^{(l)}} = \varphi^a_{-H^{(m)}}$. The local model for $\gamma'$ in $(N',\alpha')$ from Lemma \ref{Lemma: local model reeb orbits} is given by $U' = B\times \R/lc\Z$, with contact form $\alpha' = \lambda_0 + H^{(l)}dt$. In particular, the return map of $\gamma'$ is $\varphi_{-H^{(l)}}$, and hence by Definition \ref{def: SDM} and the observation above, $\gamma'$ is a simple Reeb SDM for $(N',\alpha')$ representing a primitive non-torsion homology class in $H_1(W';\Z)$.

The action of $\gamma'$ is $\mathcal{A}(\gamma')=l\mathcal{A}(z)=lc$ and we denote its mean index by $\Delta$. By Theorem \ref{thm: SDM gene orbits} and \eqref{eq:local symp hom support}, for all sufficiently large prime number $p$, there exists a Reeb orbit $\gamma'_p$ of $(\Sigma',\alpha')$ with 
\begin{equation}\label{eq: action index inequality}
    \mathcal{A}(\gamma'_p)\in (plc+\delta_p,plc+\epsilon),
\end{equation}
and
\begin{align} \label{eq: mean index inequality}
    p\Delta+1\leq \Delta(\gamma'_k)\leq p\Delta+2n,
\end{align}
and representing a homology class $p[\gamma']=p\eta'\in H_1(W';\Z)$. Consider $\gamma_p=\Phi(\gamma'_p)$ to be the corresponding projected Reeb orbit. Note that each $\gamma_p$ has the same action and mean index as those of $\gamma'_p$. We will show below that the orbits $\gamma'_p$ are simple for all large primes $p$. Then, since $\Phi$ is a $l-$sheeted covering, for each large prime $p$, at most $l$ of the orbits $\gamma'_j$ satisfy $im(\Phi(\gamma'_j))=im(\gamma_p)$. The growth rate for $(\Sigma,\alpha)$ then follows.

Finally, we show that $\gamma'_p$ are geometrically distinct for all large primes $p$. For each prime $p$, the homology class $[\gamma'_p]/p=\eta'\in H_1(W';\Z)$ is primitive. Therefore, whenever $\gamma'_p$ is not simple, it is of the form $\gamma'_p=y^p$, where $[y]=\eta'\in H_1(W';\Z)/\text{Tor}$ is a primitive class and $y$ is a simple Reeb orbit. Consider $\mathcal{W}$ to be the set of simple Reeb orbits $y$ such that $\gamma_p'=y^p$ for some prime $p$, and note that by \eqref{eq: action index inequality}, for $y\in \mathcal{W}$ we have that $\mathcal{A}(y)\leq lc +\epsilon$. If $\mathcal{W}$ were infinite, by the period bound and compactness of $\Sigma'$ it would follow that $\Sigma'$ has a non-isolated Reeb orbit, and so would $\Sigma$. Hence $\mathcal{W}$ is finite. It follows by the mean index inequality above that $\gamma_p'$ must be simple for all large $p$. Otherwise, there would exist $y_0\in \mathcal{W}$ such that $y_0^p=\gamma'_p$ for infinitely many prime numbers $p$. The mean index inequalities \eqref{eq: mean index inequality} imply that 
\[\Delta+\frac{1}{p}\leq \Delta(y_0)\leq \Delta+\frac{2n}{p}\]
for infinitely many primes $p$.  Therefore, $\Delta(y_0)=\Delta$, but this contradicts $\Delta(\gamma'_p)=p\Delta(y_0)\geq p\Delta +1$ from \eqref{eq: mean index inequality}. Therefore, for every large prime $p$ the Reeb orbit $\gamma_p'$ is simple, which in turn finishes the proof of Theorem \ref{thm: SDM generation on a covering}.
\end{proof}

\subsection{Proof of Theorem \ref{thm: main_result}}
In this subsection we provide a proof of Theorem \ref{thm: main_result} relying on the machinery developed in the previous sections. 

Recall that the connected components of $\Lambda M$ are in bijection with conjugacy classes of $\pi_1(M)$. There is a natural surjection from this set to the abelianization of $\pi_1(M)$, which we identify as $H_1(M;\Z)$ via the Hurewicz homomorphism. Therefore, for a class $\eta\in H_1(M;\Z)$, we denote by $\Lambda_\eta M$ the union of all components of the free loop space corresponding to conjugacy classes of $\pi_1(M)$ that map to $\eta$ under the map described above. This yields the $H_1(M;\Z)$-decomposition of the free loop space
\begin{equation} \label{eq: H_1 splitting}
    \Lambda M = \bigsqcup_{\eta\in H_1(M;\Z)}\Lambda_\eta M,
\end{equation}
which is preserved by the Chas-Sullivan product of the free loop space homology, that is, for $\eta_1,\eta_2 \in H_1(M;\Z)$, the product is a map

\[H_n(\Lambda_{\eta_1} M) \otimes H_n(\Lambda_{\eta_2} M) \to H_n(\Lambda_{\eta_1+\eta_2} M). \]

\begin{proof}[Proof of Theorem \ref{thm: main_result}]
Let $\Sigma \subset T^*M$ be a fiberwise star-shaped hypersurface and $W\subset T^*M$ a Liouville subdomain bounding $\Sigma$, that is, $\partial W=\Sigma$. Also, let $\eta\in H_1(M;\Z)$ be a non-torsion homology class and $\beta\in H_k(\Lambda_\eta M)$ an $H_1(M)$-infinite non-nilpotent class. We assume throughout the proof that all the Reeb orbits are isolated, or equivalently, that for any $L>0$, the set of Reeb orbits with action less than $L$ is finite, because otherwise \eqref{eq:growth} holds automatically.

Consider the non-zero classes $\beta^m\in H_{mk-(m-1)n}(\Lambda_{m\eta}M)$, for $m\geq 1$. Viterbo's theorem provides an isomorphism of $\bb{Z}_2$-algebras
\begin{equation*} 
    H_*(\Lambda M)\to SH_*(W),
\end{equation*}
preserving the $H_1(M;\Z)$-splitting \eqref{eq: H_1 splitting}, where the right-hand side is endowed with the pair-of-pants product (see subsections \ref{subsec: cont and sym homology}, \ref{subsec: SH of a cotangent bdle} and \cite{abbondandolo2010floer}). This way, we can consider the classes $\beta^m$ in $SH_{mk-(m-1)n}(W;m\eta)$.

By Proposition \ref{Lemma: Reeb orbits with non zero loc hom}, let $x_m$ to be a spectral selector for $\beta^m$, which represents the homology class $m\eta$, so that $\mathcal{A}(x_m)=c_{\beta^m}(W)$ and $SH_{mk-(m-1)n}(x_m)\neq 0$. Equation \eqref{eq:local symp hom support} implies that
    \[\Delta(x_m)\in [m(k-n),mk-(m-2)n-1].\]
Moreover, the sublinearity property \eqref{eq: sublinearity spectral invts} implies that
\begin{equation*} \label{eq:lin for proof}
    c_{\beta^m}(W) \leq m c_{\beta}(W).
\end{equation*} 

For each prime number $p$, we write $x_p=y^{a_p}$, where $y$ is a simple Reeb orbit and $a_p\in \mathbb{N}$. We have two possibilities. 

If for all sufficiently large prime $p$, $a_p$ is not divisible by $p$, then the simple Reeb orbits $y_p$ such that $x_p = y_p^{a_p}$ have homology classes $[y_p] = p\frac{\eta}{a_p}$  which are multiples of $p$, therefore they are geometrically distinct. The growth rate \eqref{eq:growth} follows.

Otherwise, there exist infinitely many prime numbers $p$ such that $p$ divides $a_{p}$. Write $a_{p} = p d_p$ for some $d_p \in \mathbb{N}$. Define $\mathcal{W}$ to be the set of simple Reeb orbits $y$ such that $x_p=y^{pd_p}$ for some prime number $p$ and $d_p \in \mathbb{N}$. Note that since $x_p$ represents the homology class $p\eta$, the integer $d_p$ is a divisor of $\eta\in H_1(M;\Z)$, and the orbits $y\in \mathcal{W}$ represent homology classes $[y]=\eta/d_p$. Moreover,
\begin{equation} \label{eq: mean index estimate}
    \Delta(y)\in [\frac{k-n}{d_p},\frac{k-n}{d_p} +\frac{2n-1}{pd_p}],
\end{equation}
and 
    \[\mathcal{A}(y)= \frac{\mathcal{A}(x_p)}{pd_p}=\frac{c_{\beta^p}(W)}{pd_p}\leq \frac{c_\beta(W)}{d_p} \leq c_{\beta}(W).\]
    
If $\mathcal{W}$ were infinite, by the compactness of $\Sigma$ there would exist a non-isolated Reeb orbit. Hence $\mathcal{W}$ must be finite. Therefore, there exists an element $y\in \mathcal{W}$ and a sequence of prime numbers $p_i$ going to infinity such that $x_{p_i} = y^{p_id}$, for some fixed $d$ divisor of $\eta$. 

By the mean index estimate \eqref{eq: mean index estimate}, we have that 
\[\Delta(y)\in [\frac{k-n}{d},\frac{k-n}{d}+\frac{2n-1}{dp_i}],\]
for every $i\in \mathbb{N}$, so $\Delta(y)=\frac{k-n}{d}$.
Since $\Delta(x_{p_i})=p_i(k-n)$ and $SH_{p_i(k-n)+n}(x_{p_i})\neq 0$, by Corollary \ref{cor: SDM recognition}, each $x_{p_i}=y^{p_id}$ is a Reeb SDM for all $p_i$  representing the non-torsion class $p_i\eta\in H_1(M;\Z)$.
In particular, since a SDM orbit is totally degenerate, it follows that $\Delta(x_{p_i})$ must be an even integer, which forces $k\in n+2\Z$.
We then apply Theorem \ref{thm: SDM generation on a covering} to finish the proof.
\end{proof}

\section{String topology and examples} \label{sec: string topology and examp}

The goal of this section is to present a few ways to verify whether a closed
manifold admits a non-nilpotent homology class of a connected component of its free loop space corresponding to a non-torsion element $\eta\in H_1(M;\Z)$, i.e., an $H_1(M)$-infinite non-nilpotent class, which is the input needed in Theorem \ref{thm: main_result}. We focus on
two main methods: the existence of a section of the free loop space fibration, as observed in \cite{pellegrini2022bangert}, and a generalization given by the existence
of a free loop space homology class living over the fundamental class, as
envisaged in \cite{barraud2024floer}. We also describe, in Proposition \ref{prop: Serre spectral sequence and fundamental class}, a degeneration criterion 
in terms of the Leray-Serre spectral sequence associated to the free loop space fibration. Finally, in Example \ref{ex: no S1 action} we present manifolds admitting an $H_1(M)$-infinite non-nilpotent class but not a non-trivial $S^1$-action. The results in this section are classical and purely topological in nature.

Throughout this section, we assume $M$ to be a closed connected smooth manifold of dimension $n$ and $\eta\in H_1(M;\Z)$ a non-torsion element. We denote the free loop space evaluation map by $ev_0:\Lambda M \to M$, where $ev_0(\gamma)=\gamma(0)$. The induced map in homology is an algebra homomorphism between the singular homology of the free loop space and that of $M$, endowed with the Chas-Sullivan and intersection product, respectively; see Section \ref{subsec: SH of a cotangent bdle} and \cite[Ch 7, Proposition 1.2]{latschev2015free}.

\begin{rmk}
The whole theory could be developed over any field $\mathbb{K}$ of characteristic zero, as long as one takes into account suitable local systems in Viterbo's isomorphism, as in \cite{abouzaid2013symplectic}. We work over $\Z_2$ to avoid further technicalities.
\end{rmk}

\subsection{Homology class living over the fundamental class and sections}

We say that a homology class $\beta\in H_n(\Lambda M)$ \textit{lives over the
fundamental class} of $M$ if $(ev_0)_*(\beta)=[M]$, where $[M]\in H_n(M)$ is
the fundamental class of $M$; compare \cite[Definition~2.3]{barraud2024floer}.

The Chas--Sullivan product is additive with respect to the $H_1(M;\Z)$-filtration
\eqref{eq: splitting of free loop space}, in the sense that it takes the form
\[
H_n(\Lambda_{\eta_1} M)\otimes H_n(\Lambda_{\eta_2} M)\to H_n(\Lambda_{\eta_1+\eta_2} M),
\]
for $\eta_1,\eta_2\in H_1(M;\Z)$. Since the intersection product satisfies
$[M]\cdot [M]=[M]$, we obtain:

\begin{proposition}\label{prop: living above the fundamental class}
Suppose that $M$ admits a class $\beta\in H_n(\Lambda_\eta M)$ that lives over
the fundamental class of $M$, where $\eta\in H_1(M;\Z)$ is non-torsion. Then
$\beta$ is an $H_1(M)$-infinite non-nilpotent class.
\end{proposition}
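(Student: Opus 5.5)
The plan is to push everything forward along the evaluation map and use that $(ev_0)_*$ is multiplicative. By the discussion just before the statement, $(ev_0)_*\colon H_*(\Lambda M)\to H_*(M)$ is an algebra homomorphism from the Chas--Sullivan product to the intersection product. Taking degrees into account, the Chas--Sullivan product of two classes of degree $n$ again has degree $n+n-n=n$. Likewise, the intersection product of two classes in $H_n(M)$ lands in $H_n(M)$.

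\textbf{Step 1 (homology class and degree).} By induction on $m$, using that the product respects the $H_1(M;\Z)$-splitting \eqref{eq: splitting of free loop space}, we get $\beta^m\in H_n(\Lambda_{m\eta}M)$ for every $m\geq 1$. Indeed, $\beta^{m}=\beta^{m-1}\cdot\beta$ with $\beta^{m-1}\in H_n(\Lambda_{(m-1)\eta}M)$ and $\beta\in H_n(\Lambda_\eta M)$.

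\textbf{Step 2 (non-vanishing).} Using multiplicativity of $(ev_0)_*$, we compute
\[(ev_0)_*(\beta^m)=\bigl((ev_0)_*\beta\bigr)^m=[M]^m=[M],\]
since $[M]\cdot[M]=[M]$; the fundamental class is the unit of the intersection ring. We work over $\Z_2$, so $[M]$ exists without any orientability assumption. Since $[M]\neq 0$ for a closed connected manifold, we get $\beta^m\neq 0$ for all $m\geq 1$.

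\textbf{Step 3 (conclusion).} With $\eta$ non-torsion by hypothesis, this is exactly the definition of an $H_1(M)$-infinite non-nilpotent class, with $\eta(\beta)=\eta$.

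The only step requiring care is the multiplicativity of $(ev_0)_*$ with respect to the Chas--Sullivan and intersection products, including their grading conventions (degree shift by $-n$). This is cited from \cite[Ch 7, Proposition 1.2]{latschev2015free}. Everything else is formal.
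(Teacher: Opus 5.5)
Your proposal is correct and follows the paper's own argument: it uses that $(ev_0)_*$ is an algebra homomorphism from the Chas--Sullivan product to the intersection product, that the product is additive on the $H_1(M;\Z)$-splitting in degree $n$, and that $[M]\cdot[M]=[M]\neq 0$, which forces $\beta^m\neq 0$ for all $m$. Your remarks on the degree bookkeeping and on using $\Z_2$ coefficients to avoid orientability are appropriate additions.
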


If there exists a section $s\colon M\to \Lambda_\eta M$ of $ev_0$, then
$\beta:=s_*([M])$ lives over the fundamental class of $M$; hence it is an $H_1(M)$-infinite non-nilpotent class in the free loop space homology of $M$ by Proposition \ref{prop: living above the fundamental class}.

\begin{ex}
Among the manifolds admitting such a section are closed Lie groups $G$
with infinite fundamental group, since their free loop space fibration splits, up to homotopy, as $G\times \Omega G$. Then any loop representing a non-torsion
class $\eta\in H_1(G;\Z)$ gives rise to an $H_1(M)$-infinite non-nilpotent class. The same
is true for any closed manifold with an $H$-space structure.
\end{ex}

\begin{ex} 
Another class of examples whose free loop space fibration admits a suitable
section consists of the total space of a smooth, locally trivial, orientable $S^1$-fibration
\[S^1\hookrightarrow M\to B,\]
such that the fiber inclusion represents a non-torsion class
$\eta\in H_1(M;\Z)$. Indeed, a section of the free loop space fibration of $M$ is
given by a continuous choice of parametrization of the fibers. In particular,
any $S^1$-bundle with $\pi_2(B)=0$ has a free loop space section representing a class $\eta$.
\end{ex}

\begin{ex} \label{ex: mapping torus}

Let $V$ be a closed, connected, smooth manifold, and let $\phi\colon V\to V$
be a finite order diffeomorphism, i.e., $\phi^k=id_V$ for some $k\in\mathbb{N}$. The associated
mapping torus
\[M_\phi = V\times [0,1]/\sim,\]
where
\[(x,1)\sim (\phi(x),0), \qquad x\in V,\]
has fundamental group
\begin{equation} \label{eq: mapping torus fundamental group}
    \pi_1(M_\phi)=\pi_1(V)\rtimes_{\phi_*}\Z.
\end{equation}
Moreover, there is an isomorphism
\[H_1(M_\phi)\cong \mathrm{coker}\bigl(id-\phi_*\colon H_1(V;\Z)\to H_1(V;\Z)\bigr)\oplus \Z,\]
where the Hurewicz map is the identity on the $\Z$-factor; see
\cite[Example~2.48]{hatcher2005algebraic}. In this case, there is a section
$s\colon M_\phi\to \Lambda_{k[t]}M_\phi$ given by $s(x,t)=(x,kt)$. Hence,
there exists an $H_1(M)$-infinite non-nilpotent class with $\eta=k[t]$, where $t$ denotes a generator of the $\Z$ factor in \eqref{eq: mapping torus fundamental group}.

We note that the existence of such a section $s$ is not guaranteed when $\phi^k$ is only homotopic to the identity. This feature is exploited in Example \ref{ex: no S1 action}.
\end{ex}

The following result shows, in particular, that for manifolds with infinite
abelian fundamental group, it suffices to understand Proposition
\ref{prop: living above the fundamental class} for manifolds with free abelian
fundamental group, by passing to a finite-sheeted covering space. This follows
from the fact that a continuous map between spaces induces a map between their
free loop spaces. 

\begin{proposition}\label{prop: non-zero degree map}
Suppose that $M_1$ and $M_2$ are connected, closed, smooth manifolds of dimension $n$. Assume
that $M_1$ admits a class $\beta\in H_n(\Lambda_\eta M_1)$ that lives above the
fundamental class, and let $f\colon M_1\to M_2$ be a continuous map of non-zero
degree modulo $2$ such that $f_*(\eta)\in H_1(M_2;\Z)$ is non-torsion. Then $M_2$
has a $H_1(M_2)$-infinite non-nilpotent class.
\end{proposition}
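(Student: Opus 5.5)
The plan is to push $\beta$ forward along $f$ and reduce to Proposition \ref{prop: living above the fundamental class}. The continuous map $f$ induces $\Lambda f\colon \Lambda M_1\to \Lambda M_2$, $\gamma\mapsto f\circ\gamma$. This map sends $\Lambda_\eta M_1$ into $\Lambda_{f_*(\eta)}M_2$, since the homology class of $f\circ\gamma$ is $f_*[\gamma]$. It also commutes with evaluation: $ev_0\circ \Lambda f = f\circ ev_0$.

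Set $\beta' := (\Lambda f)_*(\beta)\in H_n(\Lambda_{f_*\eta} M_2)$. Then
\[(ev_0)_*(\beta') = f_*\bigl((ev_0)_*\beta\bigr) = f_*[M_1] = \deg_2(f)\,[M_2] = [M_2],\]
using $\Z_2$ coefficients and the assumption that the degree of $f$ is nonzero mod $2$. So $\beta'$ lives over the fundamental class of $M_2$, and it lies in the component $\Lambda_{f_*\eta}M_2$ with $f_*\eta$ non-torsion.

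Proposition \ref{prop: living above the fundamental class} then says $\beta'$ is an $H_1(M_2)$-infinite non-nilpotent class. Concretely: $ev_0$ induces an algebra map from the Chas--Sullivan product to the intersection product, so $(ev_0)_*(\beta'^m)=[M_2]^m=[M_2]\neq 0$, hence $\beta'^m\neq 0$ for every $m\ge 1$.

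There is no real obstacle. The one point to check is that $\Lambda f$ need not respect the Chas--Sullivan product, since $f$ is not an embedding or a diffeomorphism. The argument avoids this because non-nilpotency of $\beta'$ is detected entirely through $ev_0$ on $M_2$ and never uses products on $M_1$. The remaining items are that $\Lambda f$ is continuous, that it respects the $H_1$-splitting, and that degree is defined mod $2$ without orientability, since we work over $\Z_2$.
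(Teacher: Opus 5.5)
Your proposal is correct and follows the paper's argument. The paper justifies this proposition only by observing that $f$ induces $\Lambda f\colon \Lambda M_1\to\Lambda M_2$. Your steps make that explicit: you push $\beta$ forward into $H_n(\Lambda_{f_*\eta}M_2)$, compute $(ev_0)_*(\Lambda f)_*\beta=f_*[M_1]=[M_2]$ over $\Z_2$, and then apply Proposition \ref{prop: living above the fundamental class}.
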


\begin{rmk}
The class of manifolds $M$ that admit an $H_1(M)$-infinite non-nilpotent class is
closed under Cartesian product. More precisely, if $M_i$
are a closed smooth manifolds, $i=1,2$, and one of them admit an $H_1(M_i)$-infinite non-nilpotent class, then so does $M_1\times M_2$.
\end{rmk}

\subsection{The Gottlieb group}

The connected components of a free loop space fibration that admit a section are determined by the notion of Gottlieb group \cite{gottlieb1965certain, latschev2015free}. Consequently, the existence of suitable elements in this group - whose properties have been studied by homotopy-theoretic methods - provides a sufficient condition for the existence of an $H_1(M)$-infinite non-nilpotent class. We explore some applications below.

A \textit{cyclic homotopy} is a continuous map
\[h\colon M\times [0,1]\to M,\]
such that $h_0=h_1=id_M$. Given a point $x\in M$, the \textit{trace of $h$}
through $x$ is the loop
\[
t\in S^1 \longmapsto h(x,t).
\]
Note that $h$ induces a section of the free loop space fibration, and the
connected component of $\Lambda M$ in which this section takes values
determines an element of $\pi_1(M)$ (once a base point has been fixed).

\begin{definition}[\cite{gottlieb1965certain}]
The Gottlieb group of a based space $(M,x_0)$ is the subgroup
$G(M)\subset \pi_1(M;x_0)$ consisting of homotopy classes of loops that arise as
the trace through $x_0$ of some cyclic homotopy $h$.
\end{definition}

An element in the Gottlieb group whose Hurewicz map image is a non-torsion element $\eta\in H_1(M;\Z)$ yields a section of the free loop
space fibration, hence induces an $H_1(M)$-infinite non-nilpotent class.

\begin{proposition} \label{prop: Gottlieb}
    Suppose that $G(M)$ contains an element whose image in $H_1(M;\Z)$ under the
Hurewicz homomorphism is a non-torsion class $\eta$. Then there exists an $H_1(M)$-infinite non-nilpotent class in the free loop space homology of $M$.
\end{proposition}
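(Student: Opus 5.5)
The plan is to turn the Gottlieb element into a section of the evaluation fibration $ev_0\colon \Lambda M\to M$ landing in $\Lambda_\eta M$. Then Proposition~\ref{prop: living above the fundamental class} applies, exactly as in the discussion following it.

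\textbf{Building the section.} Let $g\in G(M)\subset\pi_1(M;x_0)$ be an element with Hurewicz image the non-torsion class $\eta$. By definition there is a cyclic homotopy $h\colon M\times[0,1]\to M$ with $h_0=h_1=id_M$ whose trace through $x_0$, namely $t\mapsto h(x_0,t)$, represents $g$. Since $h_0=h_1$, the map $h$ descends to a continuous map $M\times S^1\to M$. By the exponential law it therefore defines a continuous map
\[
s\colon M\to\Lambda M,\qquad s(x)(t)=h(x,t).
\]
It satisfies $ev_0\circ s(x)=h(x,0)=x$, so $s$ is a section of $ev_0$. (If one works with a smooth or Sobolev model of $\Lambda M$, first approximate $h$ by a smooth cyclic homotopy; this changes nothing up to homotopy.)

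\textbf{Locating the image component.} Since $M$ is connected, $s(M)$ lies in a single path component of $\Lambda M$. That component contains $s(x_0)$, which represents the free homotopy class of $g$. Hence $s(M)\subset\Lambda_\eta M$, because the Hurewicz image of $g$ is $\eta$. As a check, for any path $c$ from $x_0$ to $x$, the family $s(c(\tau))$ is a free homotopy between $s(x_0)$ and $s(x)$.

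\textbf{Conclusion.} Set $\beta:=s_*[M]\in H_n(\Lambda_\eta M)$, using $\Z_2$ coefficients so that $[M]$ exists without any orientability assumption. Then
\[
(ev_0)_*\beta=(ev_0\circ s)_*[M]=[M],
\]
so $\beta$ lives over the fundamental class. By Proposition~\ref{prop: living above the fundamental class}, $\beta$ is an $H_1(M)$-infinite non-nilpotent class.

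For completeness, that proposition works as follows. $(ev_0)_*$ is an algebra map from the Chas--Sullivan product to the intersection product, and $[M]\cdot[M]=[M]$. Hence $(ev_0)_*(\beta^m)=[M]\neq 0$, so $\beta^m\neq0$ for every $m\ge1$, with $\beta^m\in H_n(\Lambda_{m\eta}M)$.

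\textbf{Expected difficulty.} The only delicate point is bookkeeping: checking that the whole section lands in the component(s) over $\eta$, rather than only the point $s(x_0)$. Connectedness of $M$ settles this.
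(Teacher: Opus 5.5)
Your proposal is correct and matches the paper's argument: the cyclic homotopy $h$ gives a section $s(x)(t)=h(x,t)$ of $ev_0$ landing in $\Lambda_\eta M$, so $s_*[M]$ lives over the fundamental class and Proposition \ref{prop: living above the fundamental class} applies. Your connectedness argument placing all of $s(M)$ in the component of the trace through $x_0$ spells out a step the paper leaves implicit.
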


Oprea proves in \cite[Theorem~6]{oprea1990homotopical} that whenever
$G(M)$ contains an element whose image under the Hurewicz map is non-torsion in
$H_1(M;\Z)$, as in Proposition \ref{prop: Gottlieb}, there exists a finite
cyclic covering $\bar{M}\to M$ such that $\bar{M}$ is homotopy equivalent to
 $Y\times S^1$ for some topological space $Y$. In particular, if $\bar{M}\to M$
has odd degree, then Proposition \ref{prop: non-zero degree map} implies the
existence of an $H_1(M)$-infinite non-nilpotent class for the free loop space of $M$. Nonetheless, Proposition \ref{prop: Gottlieb} remains
useful for producing examples.

We now explain how Proposition \ref{prop: Gottlieb} yields further applications.
Let $G$ be a compact Lie group acting continuously on $M$, and fix a basepoint
$x_0\in M$. The \textit{orbit map} $w\colon G\to M$ is defined by $w(g)=g\cdot x_0$.
\begin{corollary} \label{cor: G-actions}
   If the image of $w_*\colon \pi_1(G)\to \pi_1(M)$ contains an element that maps under the Hurewicz homomorphism to a non-torsion element $H_1(M;\Z)$, then $M$ admits an $H_1(M)$-infinite non-nilpotent class.
\end{corollary}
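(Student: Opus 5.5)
The plan is to reduce Corollary \ref{cor: G-actions} to Proposition \ref{prop: Gottlieb}, by showing that the image of $w_*\colon \pi_1(G)\to\pi_1(M;x_0)$ is contained in the Gottlieb group $G(M)$. Once this inclusion is established, the hypothesis gives an element of $G(M)$ whose Hurewicz image is a non-torsion class $\eta\in H_1(M;\Z)$. Proposition \ref{prop: Gottlieb} then produces an $H_1(M)$-infinite non-nilpotent class directly.

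The key step is the construction of cyclic homotopies from loops in $G$.

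\begin{enumerate}
\item Take $[g]\in\pi_1(G,e)$, represented by a loop $g\colon[0,1]\to G$ with $g(0)=g(1)=e$. We may base at the identity $e$, since $G$ is a topological group and $\pi_1$ of a path component is independent of the base point; the orbit map restricted to the identity component $G_0$ sends $e$ to $x_0$.
\item Define $h\colon M\times[0,1]\to M$ by $h(x,t)=g(t)\cdot x$.
\item Continuity of $h$ follows from continuity of the action $G\times M\to M$.
\item Since $g(0)=g(1)=e$, we get $h_0=h_1=\mathrm{id}_M$, so $h$ is a cyclic homotopy.
\item Its trace through $x_0$ is $t\mapsto g(t)\cdot x_0=w(g(t))$, which represents $w_*[g]$.
\end{enumerate}

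Hence $w_*(\pi_1(G))\subset G(M)$, and we can pick the element $w_*[g]$ whose Hurewicz image is non-torsion.

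Finally, to be explicit about the section used inside Proposition \ref{prop: Gottlieb}, consider $s(x)=(t\mapsto g(t)\cdot x)$. This is a section of $ev_0$ landing in the component $\Lambda_\eta M$, because $M$ is connected and the free homotopy class of the trace is independent of $x$. Therefore $s_*[M]$ lives over the fundamental class, and Proposition \ref{prop: living above the fundamental class} applies.

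There is no real obstacle here. The only point requiring care is the base-point bookkeeping: checking that the loop in $G$ is taken at $e$, so that the trace is a genuine based loop at $x_0$ representing $w_*[g]$.
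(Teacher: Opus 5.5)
Your proposal is correct and takes the same route as the paper: both reduce the statement to Proposition \ref{prop: Gottlieb} via the inclusion $\mathrm{im}(w_*)\subset G(M)$. The only difference is that the paper cites this inclusion from Oprea's work, while you verify it directly with the cyclic homotopy $h(x,t)=g(t)\cdot x$, which is precisely the standard argument behind that citation.
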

\begin{proof}
    The result follows from Proposition \ref{prop: Gottlieb} because from \cite[Property~3.2]{oprea2002bochner}, it follows that $im(w_*)\subset G(M)$. 
\end{proof}
A particular consequence of this corollary is that a homologically nontrivial $S^1$-action on $M$, as in \cite{pellegrini2022bangert}, implies the existence of an $H_1(M)$-infinite non-nilpotent class.
We observe that a conclusion similar to Corollary \ref{cor: G-actions} can be obtained by passing to a maximal torus of the Lie group  $G$. Indeed, the inclusion in $G$ induces a surjection between the fundamental groups, so the $G$-action restricts to a homologically non-trivial $S^1$-action.

Here is a concrete class of manifolds. Let $G\hookrightarrow M \rightarrow B$ be a principal $G$-bundle,
where $G$ is a compact, connected Lie group and $B$ is a closed smooth
manifold. There is a five-term exact sequence associated to Serre's spectral
sequence (see \cite[Chapter~5]{weibel1994introduction})
\[H_2(M;\Z)\to H_2(B;\Z)\to H_1(G;\Z)\to H_1(M;\Z)\to H_1(B;\Z)\to 0.\]
If $H_2(B;\Z)=0$ and the first Betti number of $G$ is positive, $b_1(G)>0$, then it follows
from Corollary \ref{cor: G-actions} that Theorem \ref{thm: main_result} applies
to $M$. Examples include principal $G$-bundles over $S^n$, $n\ge 3$, with
$b_1(G)>0$ and $S^1$-bundles over a $2$-torus.

 Particular examples of the above class are homogeneous spaces $H/G$, where $H$ is a
compact Lie group and $G\subset H$ is a closed connected subgroup. If, in
addition, $H_2(H/G;\Z)=0$ and $b_1(G)>0$, then Theorem \ref{thm: main_result}
applies to $H/G$. For instance, one can take $H=U(n)$ with $n\ge 2$ and
$G=U(n_1)\times SU(n_2)$ with $n_1+n_2\leq n$ and $n_1\geq 1$.

\subsection{Leray-Serre spectral sequence and a degeneration criterion}

    Consider the free loop space fibration
\[\Omega M\hookrightarrow \Lambda M\xrightarrow{ev_0} M,\]
where $\Omega M$ is the based loop space. This is a locally trivial fibration,
and there is an associated Leray-Serre spectral sequence converging to the homology of
$\Lambda M$. More precisely, there exists a spectral sequence of $\Z_2$-modules
\[\{E^r_{p,q},\, d^r\colon E^r_{p,q}\to E^r_{p-r,q+r-1}\}_{r\ge 2},
\qquad p,q\ge 0,\]
whose $E^2$-page is
\begin{equation}\label{eq: serre spec seq}
H_p\bigl(M;\mathcal{H}_q(\Omega M)\bigr),
\end{equation}
and which converges to $H_{p+q}(\Lambda M)$.

Here $\mathcal{H}_q(\Omega M)$ denotes the local system over $M$ with fiber
$H_q(\Omega M)$ and holonomy given by the conjugation action of $\pi_1(M)$ on
$H_q(\Omega M)$. More precisely, the local system is constructed by choosing a path
lifting function for the free loop space fibration; see
\cite[Chapter~1, Theorem~4.1]{latschev2015free}. Note that $\Omega M$ admits an
analogous decomposition to \eqref{eq: splitting of free loop space}, and hence
the spectral sequence \eqref{eq: serre spec seq} admits a corresponding
$H_1(M;\Z)$-splitting, which we denote by $E^r_{p,q}(\zeta)$ for
$\zeta\in H_1(M;\Z)$. Moreover,
\[H_n(\Lambda_\zeta M;\Z_2)\cong \bigoplus_{p+q=n} E^\infty_{p,q}(\zeta).\]
The following provides a criterion for the existence of a free loop space
homology class living over the fundamental class of $M$.

\begin{proposition}\label{prop: Serre spectral sequence and fundamental class}
Let $\eta\in H_1(M;\Z)$ be non-torsion. If there exists a nonzero class
\[
A\in H_n\bigl(M;\mathcal{H}_0(\Omega_\eta M)\bigr)=E^2_{n,0}(\eta)
\]
that survives to $E^\infty_{n,0}(\eta)$, then there is a class
$\beta\in H_n(\Lambda_\eta M)$ living above the fundamental class $[M]$.
Equivalently, this condition is that
\[
d^r(A)=0 \qquad \text{for all } r\ge 2.
\]
\end{proposition}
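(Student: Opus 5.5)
Since $A$ survives to $E^\infty_{n,0}(\eta)$, the plan is to identify $E^\infty_{n,0}(\eta)$ with the image of the edge homomorphism $(ev_0)_*\colon H_n(\Lambda_\eta M)\to H_n(M)$. Then we check that the resulting class in $H_n(M)$ is $[M]$.

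First, the bottom row of the spectral sequence. Since $d^r$ maps $E^r_{p,q}$ to $E^r_{p-r,q+r-1}$, the groups $E^r_{p,0}$ receive no nonzero differentials: an incoming differential would come from $E^r_{p+r,1-r}$, and $1-r<0$. So $E^\infty_{n,0}(\eta)=\bigcap_r\ker d^r\subset E^2_{n,0}(\eta)$, and the hypothesis $d^r(A)=0$ for all $r\ge 2$ is exactly $A\in E^\infty_{n,0}(\eta)$. This proves the stated equivalence. Moreover, $E^\infty_{n,0}(\eta)$ is the top quotient $F_nH_n/F_{n-1}H_n$ of the filtration of $H_n(\Lambda_\eta M)$. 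Choose $\beta\in H_n(\Lambda_\eta M)$ projecting to $A$.

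Second, relate $A$ to $(ev_0)_*\beta$ through naturality of the Leray--Serre spectral sequence. Apply it to the map of fibrations from $\Lambda_\eta M\to M$ to $\mathrm{id}\colon M\to M$, whose fiber map is $\Omega_\eta M\to \mathrm{pt}$. On $E^2_{n,0}$ this gives the map
\[
H_n\bigl(M;\mathcal{H}_0(\Omega_\eta M)\bigr)\to H_n(M;\Z_2)
\]
induced by the augmentation of local systems $\mathcal{H}_0(\Omega_\eta M)\to\Z_2$. On the abutments it is $(ev_0)_*$. Compatibility of filtrations then gives $(ev_0)_*\beta=\epsilon_*(A)$.

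The main obstacle is to show $\epsilon_*(A)=[M]$, that is, $\epsilon_*(A)\neq 0$, since $H_n(M;\Z_2)=\Z_2$. The local system $\mathcal{H}_0(\Omega_\eta M)$ is the permutation module $\Z_2[S]$, where $S$ is the set of path components of $\Omega_\eta M$: elements of $\pi_1(M)$ mapping to $\eta$, acted on by conjugation. Decompose $S$ into orbits, which correspond to conjugacy classes, i.e.\ components of $\Lambda_\eta M$. Then $H_n(M;\Z_2[S])$ splits over orbits, and each orbit $\pi_1/Z(g)$ contributes $H_n(\overline M_g;\Z_2)$ by Shapiro's lemma, where $\overline M_g$ is the covering with group $Z(g)$. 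This group is $\Z_2$ if the covering is finite and $0$ otherwise. On a nonzero summand the augmentation is the transfer $H_n(\overline M_g)\to H_n(M)$, which is multiplication by the degree $[\pi_1:Z(g)]$ mod $2$.

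So the remaining difficulty is that an orbit of even size would kill $[M]$. I would first try to handle this by restricting to one component where $Z(g)$ has finite odd index. If that cannot be ensured, I would fall back on the statement as needed for Proposition \ref{prop: living above the fundamental class}. The point there is that $\epsilon_*(A)$ is a nonzero multiple of $[M]$; with $\Z_2$ coefficients this means precisely that the degree is odd.
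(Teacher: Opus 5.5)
Your reduction is correct. The bottom row receives no differentials, so $E^\infty_{n,0}(\eta)=\bigcap_r\ker d^r$, which gives the stated equivalence. Naturality of the Leray--Serre spectral sequence for the map of fibrations from $ev_0\colon\Lambda_\eta M\to M$ to $\mathrm{id}\colon M\to M$ then gives $(ev_0)_*\beta=\epsilon_*(A)$ for any $\beta$ lifting $A$. (Minor slip: under Shapiro's lemma the augmentation corresponds to the covering projection $p_*\colon H_n(\overline{M}_g)\to H_n(M)$, not the transfer; your count $p_*[\overline{M}_g]=[\pi_1(M):Z(g)]\,[M]$ is right.)

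The proposal, however, stops exactly where a proof is needed, and the obstacle you found cannot be removed: with $\Z_2$ coefficients the proposition as stated is false. Let $M$ be the mapping torus of the swap $(u,v)\mapsto(v,u)$ on $T^2$. This is a closed flat $3$-manifold with $\pi_1(M)=\langle x,y,t\mid [x,y]=1,\ txt^{-1}=y,\ tyt^{-1}=x\rangle$ and $H_1(M;\Z)=\Z\,[x]\oplus\Z\,[t]$.
\begin{itemize}
\item For $\eta=[x]$, the elements of $\pi_1(M)$ in the class $\eta$ are $x^ay^{1-a}$, $a\in\Z$.
\item They form conjugacy classes $\{x^ay^{1-a},x^{1-a}y^{a}\}$ of size exactly $2$, each with centralizer $\langle x,y,t^2\rangle\cong\Z^3$ of index $2$.
\item Since $M$ is aspherical, $E^2_{p,q}=0$ for $q>0$, so every element of $E^2_{3,0}(\eta)\cong\bigoplus_{a\ge1}\Z_2\neq0$ survives.
\item Yet every component of $\Lambda_\eta M$ is homotopy equivalent to $T^3$, with $ev_0$ corresponding to the double cover $T^3\to M$. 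Hence $(ev_0)_*=0$ on $H_3(\Lambda_\eta M;\Z_2)$, and no class lives over $[M]$.
\end{itemize}

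Consequently neither of your fallbacks works. The hypothesis does not produce a component whose centralizer has odd finite index (the example has none). And over $\Z_2$, ``a nonzero multiple of $[M]$'' just means $[M]$.

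What your argument does prove is the corrected statement with the extra hypothesis $\epsilon_*(A)\neq0$. Concretely, $E^2_{n,0}(\eta)$ has a basis indexed by the finite conjugacy classes $C$ lying over $\eta$, and $\epsilon_*$ sends the basis element of $C$ to $|C|\,[M]$. In particular the corrected statement holds when $A$ comes from a central element, which is the only case used in Example \ref{ex: aspherical mflds}. Accordingly, $H^0(M;\mathcal{H}_0(\Omega M))$ there should be the span of the finite conjugacy classes, not $\Z_2[Z(\pi_1(M))]$; this is exactly where even-size classes enter. The paper gives no proof of its own, only a reference to \cite{barraud2024floer}, so the statement needs to be repaired rather than proved.
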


See \cite[Proposition~2.6]{barraud2024floer} for a proof. The relevant
differentials in Proposition \ref{prop: Serre spectral sequence and fundamental class} are
\begin{equation}\label{eq: Serre differentials}
d^r\colon E^r_{n,0}(\eta)\longrightarrow E^r_{n-r,r-1}(\eta),
\qquad r=2,\dots,n.
\end{equation}
We would like to find topological constraints on $M$ that force these
differentials to vanish. Note that
\[E^2_{n-r,r-1}(\eta)=H_{n-r}\bigl(M;\mathcal{H}_{r-1}(\Omega M)\bigr).\]

\begin{ex} \label{ex: aspherical mflds}
Proposition \ref{prop: Serre spectral sequence and fundamental class} applies to
a aspherical manifold, i.e., $M\simeq K(G,1)$ homotopy equivalent to an Eilenberg-MacLane space, provided that $G=\pi_1(M)$ has infinite abelianization and a non-vanishing class $A$ as above exists. Since $K(G,1)\simeq BG$, where $BG$ is the classifying
space of $G$, there is a homotopy equivalence
\[\Omega M\simeq \Omega BG\simeq G.\]
In particular, $H_\ell(\Omega M)=0$ for all $\ell\ge 1$, and hence the
differentials $d^r(A)$ in \eqref{eq: Serre differentials} vanish for $r\ge 2$.

We now investigate the existence of a nonzero class
$A\in H_n\bigl(M;\mathcal{H}_0(\Omega_\eta M)\bigr)$. Note that
$\mathcal{H}_0(\Omega M)$ is a rank-one local system over $M$ with fiber the
$\Z_2$-module
\[H_0(\Omega M)\cong \bigoplus_{\pi_1(M)}\Z_2,\]
and whose holonomy representation is given by the conjugation action of
$\pi_1(M)$ on itself. Thus,
\[\mathcal{H}_0(\Omega M)\cong \Z_2[\pi_1(M)].\]
Poincar\'e duality for singular homology with coefficients in a local system
implies that
\[H_n\bigl(M;\mathcal{H}_0(\Omega M)\bigr)\cong H^0\bigl(M;\mathcal{H}_0(\Omega M)\bigr),\]
where we are using coefficient in $\Z_2$ (more generally, one of the homologies would be twisted by the orientation local system); see \cite[Theorem~10.2]{spanier1993singular}
and \cite[p.~102]{davis2001lecture}. Moreover, there is an identification
\[H^0\bigl(M;\mathcal{H}_0(\Omega M)\bigr)\cong \Gamma\bigl(\mathcal{H}_0(\Omega M)\bigr)
\cong \Z_2\bigl[Z(\pi_1(M))\bigr],\]
where $\Gamma(\mathcal{H}_0(\Omega M))$ is the space of sections of the local system and $Z(\pi_1(M))$ denotes the center of $\pi_1(M)$; see \cite[p.~102]{davis2001lecture}. The above correspondences preserve the
$H_1(M;\Z)$-splitting \eqref{eq: splitting of free loop space}. In particular,
a nonzero class $A\in E^2_{n,0}(\eta)$ exists whenever $Z(\pi_1(M))$ contains a
nontrivial element that projects to a non-torsion element $\eta\in H_1(M;\Z)$.
\end{ex} 

\begin{ex} \label{ex: no S1 action}
In \cite[p.5]{cappell2013closed}, Cappell, Weinberger and Yan construct an example of a closed aspherical topological
manifold of dimension $6$ whose fundamental group has center $\Z$ and which does not admit a non-trivial $S^1$-action. Moreover, since a generator of the center has non-torsion image under the Hurewicz map in homology, Example \ref{ex: aspherical mflds} and Proposition \ref{prop: Serre spectral sequence and fundamental class} imply that such a manifold admits a free loop space homology class living over the fundamental class.

The manifold in consideration, denoted by $V_\phi$, is a mapping torus of a closed aspherical topological manifold $V$ of dimension 5 by a homeomorphism $\phi\colon V\to V$. The key feature of the construction is that, although $\phi^2$ is homotopic to $id_V$, the homeomorphism $\phi$ itself is not homotopic to an involution of $V$. In contrast with Example \ref{ex: mapping torus}, there is no obvious section of the free loop space fibration of $V_\phi$.

We remark that $V_\phi$ can in fact be endowed with a smooth structure, as observed in \cite[Remark~7.21]{weinberger2022variations}. Briefly, the reason is that $V$ is a closed PL-manifold of dimension 5, so it admits a smoothing \cite[Theorem~2.2]{milnordifferential}, and the homeomorphism $\phi$ can be chosen to be a diffeomorphism via surgery theory.

By taking products with closed hyperbolic manifolds, one concludes that in every dimension $\geq 6$ there exists a closed smooth manifold $M$ admitting an $H_1(M)$-infinite non-nilpotent class and yet no non-trivial $S^1$-action. In particular, Theorem \ref{thm: main_result} provides a growth rate for the number of Reeb orbits on star-shaped hypersurfaces in the cotangent bundle of these manifolds.
\end{ex}

\begin{rmk}
    We note that the notion of a \textit{manifold covered by diffeomorphisms}, as
    introduced in \cite[Definitions~6.22~and~6.29]{barraud2024floer}, provides
    another class of manifolds admitting a free loop space homology class living
    over the fundamental class. At present, the authors are not aware of concrete
    examples in this class that are not already discussed in the previous
    subsections.
\end{rmk}

\bibliographystyle{alpha}\bibliography{bibliography}

\end{document}